\documentclass[reqno]{amsart}

\usepackage{graphicx,pdfsync,amssymb,mathrsfs,amsfonts,amsbsy,color,esint,mathtools,tikz,mdframed}
\usetikzlibrary{positioning}
\usepackage[shortlabels]{enumitem}
\usepackage{makecell}

\usepackage{booktabs,array,tabularx}

\mathtoolsset{showonlyrefs}

\usetikzlibrary{arrows.meta,positioning,calc}

\usepackage{booktabs}
\usepackage{array}
\usepackage[table]{xcolor}
\usepackage{hyperref}
\usepackage{amsfonts} 

\hypersetup{
colorlinks=true,
linkcolor=blue,
filecolor=blue,
urlcolor=blue,
citecolor=blue,
}

\usepackage[utf8]{inputenc}
\usepackage[T1]{fontenc}

\DeclareMathOperator{\Supp }{supp}

\newtheorem{theorem}{Theorem}[section]
\newtheorem{lemma}[theorem]{Lemma}
\newtheorem{proposition}[theorem]{Proposition}

\newtheorem{remark}[theorem]{Remark}

\setlist[enumerate]{itemsep=3pt}
\setlist[itemize]{itemsep=3pt}

\def \R {\mathbb{R}}  
\def \N {\mathbb{N}}  

\def \p {\partial}

\def \ep {\varepsilon}
\def \om {\omega}
\def \Om {\Omega}

\def \H {\mathcal{H}}

\numberwithin{equation}{section}

\begin{document}

\title[Inviscid damping without derivatives]{Inviscid damping without derivatives near Couette flow}

\author{Dengjun Guo}

\address{Academy of Mathematics and Systems Science, Chinese Academy of Sciences, Beijing, China}

\email{djguo@amss.ac.cn}

\author{Xiaoyutao Luo}

\address{Academy of Mathematics and Systems Science, Chinese Academy of Sciences, Beijing, China}

\email{xiaoyutao.luo@amss.ac.cn}

\subjclass[2020]{35Q31, 35B40}
 
\keywords{Inviscid Damping, Couette Flow, 2D Euler equations}
\date{\today}

\begin{abstract}
    We study the long-time dynamics near Couette flow for the 2D Euler equations in unbounded geometries. We identify a mechanism of nonlinear inviscid damping at Yudovich regularity, distinct from classical phase mixing: velocity decay driven by spatial evacuation of vorticity.

The mechanism leads to a geometry–sign classification of the dynamics. In the infinite channel, small nonnegative bounded-vorticity perturbations undergo global damping without derivative assumptions. In the whole plane, small nonnegative perturbations exhibit enhanced dispersion and  damping along a set of times of density one, while non-positive perturbations remain confined and do not damp, even when arbitrarily small in Gevrey classes. Moreover, damping can coexist with infinite-time growth of vorticity derivatives, even under arbitrary shear modulation. The classification is obtained through an interplay between new Lyapunov functionals and Hamiltonian conservation.
\end{abstract}
\maketitle

\section{Introduction}\label{sec:intro}

 The inviscid damping problem asks whether velocity perturbations near a shear flow of an ideal fluid can decay in time despite the absence of viscosity or any other dissipative mechanism. Phase mixing was proposed as an underlying mechanism of inviscid damping, whose origins go back to the classical works of Kelvin, Rayleigh, and Orr~\cite{Kelvin1887,Rayleigh1879,Orr1907}. A background shear transports vorticity to progressively finer spatial scales. Since the velocity is one derivative smoother than the vorticity through the Biot--Savart law, the transfer toward high frequencies produces decay of the velocity field. In this sense, inviscid damping is the fluid analogue of Landau damping.

The nonlinear phase-mixing theory, initiated in the breakthrough work of Bedrossian and Masmoudi~\cite{MR3415068} and further developed in~\cite{MR4076093,MR4740211,MR4628607}, is formulated in streamwise-periodic geometries and requires regularity above the Gevrey-$2$ threshold. This regularity requirement is essentially sharp within that framework. Below the Gevrey-$2$ scale, nonlinear instability occurs~\cite{MR4630602}; in low Sobolev regularity, nontrivial steady states~\cite{MR2796139} and traveling waves~\cite{CastroLear23Travellingwaves} exist arbitrarily close to Couette flow, and only finite-time damping can hold in general~\cite{2511.21583}. Thus, in periodic geometry, nonlinear inviscid damping from the Gevrey theory cannot   be extended  to the Yudovich or low-Sobolev regime.

These results raise a more fundamental question. Is the need for derivatives intrinsic to inviscid damping itself, or only to phase mixing? In a periodic domain, vorticity cannot escape spatially, so relaxation must be generated through fine-scale mixing. In an unbounded domain, this restriction disappears: the shear can transport vorticity over arbitrarily large distances, allowing the vorticity to disperse. Such spatial evacuation can weaken the induced velocity without relying on a transfer to fine scales and therefore need not obey the same regularity threshold as phase mixing.

The purpose of this paper is to establish this new mechanism of nonlinear inviscid damping near the simplest prototypical shear flow: the planar Couette flow $(y, 0)$. We show that spatial evacuation of vorticity produces velocity damping at Yudovich regularity, with no derivative assumptions on the initial perturbation. This mechanism is distinct from the classical phase mixing: the decay is driven by the dispersion of vorticity rather than by the formation of progressively finer structures.

Writing $\omega$ for the perturbation vorticity and $u$ for the corresponding perturbation velocity, the governing 2D Euler equations take the form:
\begin{equation}\label{eq:euler}
\begin{cases}
\partial_t\omega+y\partial_x\omega+u\cdot\nabla\omega = 0 
& \\
\om |_{t=0} = \omega_{in} .\\
\end{cases}
\end{equation}
In this paper, the spatial domain $\Om $ is either the infinite channel $\R \times [-1,1] $ or the whole plane $\R^2$. The perturbation velocity is recovered from the stream function by
\begin{equation}\label{eq:def_stream}
    u=\nabla^\perp\psi=(-\partial_y\psi,\partial_x\psi),
    \qquad
    \Delta\psi=\omega,
\end{equation}
with the usual no-penetration boundary conditions in the channel.

Throughout this work, we study weak solutions in the classical Yudovich class. For initial vorticity $\omega_{in} \in L^{1}(\Omega) \cap L^{\infty}(\Omega)$, the 2D Euler equations \eqref{eq:euler} admit a unique global Yudovich weak solution. In this regime, the vorticity $\omega(t)$ is transported by the area-preserving flow generated by the velocity field $(y,0)+u$ and all $L^p$ norms of the vorticity are conserved for all times, $\| \omega(t)\|_{L^p} = \|\omega_{in}\|_{L^p}$ for $1 \le p \le \infty$. 
This low-regularity setting provides the natural functional framework for our results, which do not impose  any derivative assumptions on the initial data $\om_{in}$.

\subsection{Inviscid damping without derivatives}
Our first result establishes nonlinear inviscid damping in the infinite channel at the level of bounded vorticity. Throughout the paper, we write $z=(x,y)$ for a point in the channel or $\R^2$.

\begin{theorem}[{Inviscid damping in $\R\times[-1,1]$}]\label{thm:intro-channel-positive-damping}
Consider \eqref{eq:euler} in the infinite channel $\Om=\R\times[-1,1]$. There exists $\ep_0>0$ such that for any $\om_{in}\in L^1 \cap L^\infty(\Om)$ satisfying
\begin{equation}\label{eq:thm:intro-channel-positive-damping_1}
\omega_{in} \ge 0 \quad \text{and} \quad \| \om_{in} \|_{L^\infty (\Om)} \le \ep_0, 
\end{equation}
the corresponding Yudovich solution $\om(t)$ satisfies
\begin{equation}\label{eq:thm:intro-channel-positive-damping_2}
    \sup_{x_0 \in\mathbb R}
    \int_{|x-x_0| \le 1}
    \omega(t,z)\,dz
    \rightarrow 0 \quad \text{as $t \to \infty $}.
\end{equation}
Consequently,
\begin{equation}\label{eq:thm:intro-channel-positive-damping_3}
    \|u(t)\|_{L^p(\Om)}\to 0 \quad \text{for all $1 <  p \le \infty$}.  
\end{equation}

\end{theorem}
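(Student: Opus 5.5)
We will use two ingredients: a Lyapunov functional built from the first $y$-moment, and the Hamiltonian (kinetic energy) conservation. The target is to show that nonnegative vorticity spreads in $x$ at a rate proportional to its $y$-spread, and that small $\|\omega\|_{L^\infty}$ forces a nontrivial $y$-spread of any lump of mass.

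\textbf{Step 1: Lyapunov functional.} In the channel, $\int \omega\,dz$ and $\int y\,\omega\,dz$ are conserved. The $y$-moment is conserved by translation invariance in $x$ together with the no-penetration condition. The $x$-moment satisfies
\begin{equation}
\frac{d}{dt}\int x\,\omega\,dz=\int y\,\omega\,dz+\int u^1\omega\,dz .
\end{equation}
The last term vanishes by the usual antisymmetry of the Biot--Savart kernel. In the channel one checks this with the Green's function and its image sum.

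Next we consider the second-order quantity
\begin{equation}
I(t)=\int (x-\bar y\, t)^2\,\omega\,dz, \qquad \bar y=\frac{\int y\,\omega}{\int\omega}.
\end{equation}
Its second derivative should be
\begin{equation}
I''=2\int (y-\bar y)^2\,\omega\,dz+\text{(nonlinear terms)}.
\end{equation}
We will try to show that the nonlinear terms are controlled by $\|\omega\|_{L^\infty}\|\omega\|_{L^1}$ times $\int (y-\bar y)^2\omega$. The precise mechanism is a symmetrization identity. Since $\omega\ge0$, the expression $\iint \nabla K(z-z')\cdot(\ldots)\,\omega(z)\omega(z')$ is a virial-type term, and it can be bounded using the fact that $\omega\ge0$ with $\omega\le\ep_0$ cannot concentrate.

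Alternatively, and more robustly, we will use localized versions of this computation. For each window $|x-x_0|\le R$ we use a cutoff functional and show a local mass-decrease mechanism.

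\textbf{Step 2: Non-concentration in $y$.} Suppose at some time $t$ a window $|x-x_0|\le 1$ carries mass $m$. Since $\omega\le\ep_0$ and the channel has height $2$, the support of that mass has area at least $m/\ep_0$. Hence it has vertical spread $\gtrsim$, a uniform amount. By the Step 1 inequality, the local variance then grows like $t^2$. This is incompatible with finite total mass remaining near a single window forever.

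Making this precise requires an argument that is uniform in $x_0$. We plan to argue by contradiction. Suppose there exist $\delta>0$, $t_n\to\infty$ and $x_n$ with mass $\ge\delta$ in the window around $x_n$. Translate in $x$ and pass to a weak-$*$ limit. Conservation of the Hamiltonian and of the moments then gives a limit profile that must be steady. For small $L^\infty$ norm, a nonnegative steady nontrivial profile is excluded by the Lyapunov monotonicity: $\int y\omega$-type rigidity forces the profile to be concentrated on a single streamline $y=$ const, which is impossible when $\omega\le\ep_0$ and the mass is $\ge\delta$.

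\textbf{Step 3: Velocity decay.} The velocity decay follows from the mass decay. Split the Biot--Savart law into a near part and a far part. The channel kernel decays exponentially in $|x|$. Hence
\begin{equation}
|u(z)|\lesssim \sup_{x_0}\int_{|x-x_0|\le1}\omega\,dz'+\ldots,
\end{equation}
where the singular near part is handled by interpolation, $\|\omega\|_{L^\infty}^{1/2}(\text{local mass})^{1/2}$. This gives $\|u\|_{L^\infty}\to0$. The $L^p$ bounds with $p>1$ follow from the uniform bound $\|u\|_{L^2}^2=$ const combined with interpolation. For $1<p<2$ we instead bound $\|u\|_{L^p}$ via $\|\omega\|_{L^1}$ and the local decay.

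\textbf{Main obstacle.} The main obstacle is Step 1/2: controlling the nonlinear velocity terms in the virial identity using only $\omega\ge0$, smallness, and no derivatives. The whole argument depends on the sign being essential there.
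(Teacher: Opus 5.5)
The decisive part of the theorem is your Steps 1--2, and there the proposal has no working mechanism.

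The global virial quantity $I(t)=\int(x-\bar y t)^2\omega\,dz$ is unbounded. Even granting $I''\gtrsim\int(y-\bar y)^2\omega\,dz$, you would only learn that $I$ grows quadratically. That is already its a priori size, since transport speeds are bounded. It is also perfectly compatible with a coherent packet of mass $\delta$ surviving while it drifts, so there is no finite dissipation budget to exhaust.

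Your control of the nonlinear terms is also unsupported. Differentiating $I'$ produces the pressure term $-2\int(x-\bar y t)\partial_x p\,\omega\,dz$, which has an unbounded weight. Moreover, $\int u^x\omega\,dz\neq0$ in general in the channel, because the image part of $K^x$ is symmetric under $z\leftrightarrow z'$.

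What is needed is a \emph{bounded} functional whose derivative dominates a coercive dissipation. The paper takes $\mathcal M_X(t)=\int(y-c)H(x-X(t))\omega\,dz$, with $H$ bounded and strictly increasing and $X(t)=x_*+ct$. It shows $\frac{d}{dt}\mathcal M_X\ge\frac12\int(y-c)^2H'(x-X)\omega\,dz$. The monotonicity of the weight is the whole point; a compactly supported cutoff, as your ``localized version'' suggests, would lose the sign.
\begin{itemize}
\item After symmetrization, the $u^y$-term equals $\frac12\iint(H(x-X)-H(x'-X))K^y(z,z')\omega(z)\omega(z')\,dz\,dz'\ge0$, because $(x-x')K^y\ge0$ and $\omega\ge0$.
\item The $u^x$-term is absorbed into the dissipation. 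One splits $\{|y'-c|\le|y-c|\}$ from its complement and uses $\int e^{|x-x'|}|K^x|\,dx'\lesssim1$, at the cost of one factor $\|\omega_{in}\|_{L^\infty}$. This is where smallness of $\|\omega_{in}\|_{L^\infty}$ alone, with no $L^1$ smallness, enters.
\end{itemize}

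Your compactness step does not close either. A weak-$*$ limit of $\omega(t_n+s,\cdot+x_n)$ is at best an eternal Yudovich solution, and nothing forces it to be steady or traveling. A single concentration event yields only dissipation of size $C_{\rho,\delta}\tau$: the packet stays within distance $2$ of the observer and at distance $\ge\rho$ from $y=c$ for a short time $\tau$. That is consistent with any finite budget. The contradiction needs infinitely many concentration events seen by one observer path whose frame changes have summable cost.

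The paper builds such a path as follows.
\begin{itemize}
\item It discards an $L^1$-small tail to get $|x_n|/t_n$ bounded.
\item It extracts $t_{n+1}\ge4t_n$ with $|x_n/t_n-c_\infty|\le2^{-n}$.
\item It joins the points $(t_n,x_n)$ by segments of slopes $c_n$ with $\sum_n|c_{n+1}-c_n|<\infty$.
\item Switching functionals at $t_{n+1}$ costs $|c_{n+1}-c_n|\,|\int H(x-x_{n+1})\omega\,dz|$, which is summable.
\end{itemize}
Telescoping then gives $\sum_n\int_{t_n}^{t_{n+1}}\mathcal D_n\,dt<\infty$. This contradicts $\mathcal D_n\ge C_{\rho,\delta}$ on $[t_n,t_n+\tau]$ for every $n$.

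Your ``rigidity on a single streamline'' is the right endpoint intuition, since vanishing of $\int(y-c)^2H'\omega$ forces $\omega=0$. Without the bounded moving functional and the summable path, however, you cannot reach it.

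Finally, in Step 3, $\|u\|_{L^2}^2$ is not conserved in the channel; only $\int|u|^2+\int y^2\omega$ is. If it were conserved, $L^2$ damping would be impossible. Instead, derive $\|u\|_{L^\infty}\to0$ from local mass decay by your near/far splitting, and interpolate with $\|u\|_{L^1}\lesssim\|\omega\|_{L^1}$.
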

Notably, the smallness assumption is imposed only on $\|\omega_{in}\|_{L^\infty}$; no smallness is required of $\|\omega_{in}\|_{L^1}$, which can be arbitrarily large.

The uniformity with respect to $x_{0}$ in \eqref{eq:thm:intro-channel-positive-damping_2} excludes any vorticity packet of positive mass traveling to spatial infinity, which is the key to obtaining a global damping estimate. 

Several aspects of this damping mechanism are sharp in the Yudovich class:

\begin{itemize}
    \item \textbf{Exclusion of $L^{1}$ damping:} The endpoint \(p=1\) is excluded in \eqref{eq:thm:intro-channel-positive-damping_3} due to the velocity lower bounds demonstrated by  Theorem \ref{thm:couette-lower-bound} below.

    \item \textbf{Absence of universal rates:} In the Yudovich class, there can be no universal damping rate even within the positivity class (as demonstrated in Remark \ref{rmk:no uniform rate channel}).
\end{itemize}

Furthermore, the positivity assumption in Theorem \ref{thm:intro-channel-positive-damping} is structural: outside the nonnegative class, Hamiltonian conservation can prevent velocity damping.

Recall that the 2D Euler equations in the infinite channel preserve the Hamiltonian:
\[
    \mathcal H[\omega]
    =
    \int_\Omega |u|^2\,dz
    +
    \int_\Omega y^2\omega\,dz.
\]

\begin{theorem}[Non-damping in the channel] 
\label{thm:intro-channel-hamiltonian}
Consider \eqref{eq:euler} in the infinite channel $\Om=\R\times[-1,1]$. If the initial data $\om_{in} \in L^1\cap L^\infty (\Om)$ satisfy
\begin{equation}\label{eq:thm:channel-hamiltonian}
    \mathcal H[\om_{in}]>
    \int_\Omega(\om_{in})_+\,dz,
\end{equation}
then for the corresponding Yudovich solution $\om(t)$, its velocity field $u(t )$  satisfies for any $1\le p \le \infty $ the global lower bound:
\[
   \inf_{t \ge 0} \|u(t )\|_{L^p(\Omega)} \ge c_p \,>0.
\]

In particular, the criterion \eqref{eq:thm:channel-hamiltonian} applies to the following classes of initial data:
\begin{itemize}
    \item For any $s<1+\frac1p$ and $1\le p\le \infty$, there exist smooth compactly supported  (sign-changing or negative)  
\(\om_{in}^{(\ep)}\)  satisfying \eqref{eq:thm:channel-hamiltonian} and 
\[
\|\om_{in}^{(\ep)}\|_{W^{s,p}(\Omega)}\to0
\]
as \(\ep\to0\).

\item For every nonzero compactly supported vorticity profile \(\omega_\ast\), the
data
\[ \om_{in}=\lambda\omega_\ast
\]
satisfy \eqref{eq:thm:channel-hamiltonian} whenever \(|\lambda|\) is sufficiently large.
\end{itemize}
\end{theorem}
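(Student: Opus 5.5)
Since $\mathcal H$ is conserved along Yudovich solutions, we have $\mathcal H[\omega(t)]=\mathcal H[\om_{in}]$ for all $t\ge0$. On the channel $|y|\le1$, so $y^2\le1$ and
\[
\int_\Omega y^2\omega(t)\,dz \le \int_\Omega y^2\omega_+(t)\,dz \le \int_\Omega \omega_+(t)\,dz = \int_\Omega (\om_{in})_+\,dz .
\]
The last equality holds because $\omega(t)$ is the push-forward of $\om_{in}$ by an area-preserving flow, which conserves $\int F(\omega)$ for $F(s)=s_+$. It follows that
\[
\|u(t)\|_{L^2}^2 \ge \mathcal H[\om_{in}] - \int_\Omega(\om_{in})_+\,dz =: \delta>0
\]
uniformly in $t$, which gives the case $p=2$.

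For general $p$, I would interpolate against uniform bounds on $u(t)$. For the upper bounds, $\|u(t)\|_{L^\infty}\lesssim \|\omega\|_{L^1}^{1/2}\|\omega\|_{L^\infty}^{1/2}$ (standard Biot--Savart estimate in the channel), and $\|u\|_{L^2}^2\le \mathcal H+\|\omega\|_{L^1}$ is bounded. For $p\ge2$, $\|u\|_{L^2}^2\le \|u\|_{L^p}^{\theta}\|u\|_{L^{r}}^{2-\theta}$ with $r<2$ if possible, but it is easier to use $\|u\|_{L^2}^2\le \|u\|_{L^1}\|u\|_{L^\infty}$ for $p=1$. For $2<p\le\infty$, I use $\|u\|_{L^2}^2 \le \|u\|_{L^{q}}^{a}\|u\|_{L^p}^{b}$ with some $q<2$. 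This requires a uniform $L^q$ bound with $q<2$, which is not automatic because $u\notin L^1$ in general.

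So the main obstacle is $p>2$ (and in particular the lower bound in $L^1$ trivially follows). For $1\le p<2$, Hölder with $L^\infty$ gives $\delta\le\|u\|_2^2\le \|u\|_p^p\|u\|_\infty^{2-p}$, hence $\|u\|_p\ge c_p$. For $p>2$, I would instead argue through the mean-flow component. In the channel, the $x$-average $\bar u_1(y)=-\p_y\bar\psi$ has $\|\bar u_1\|_{L^\infty_y}$ controlled by $\|\omega\|_{L^1}$, which suggests a lower bound through $L^2\subset$ interpolation between $L^1_{\text{loc}}$-type quantities. More robustly, I would split $u=u\mathbf 1_{|u|\ge\eta}+u\mathbf 1_{|u|<\eta}$. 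The small part is the delicate one: $\int_{|u|<\eta}|u|^2$ need not be small. I would try to control it by a weak-type bound, namely that $u\in L^{2,\infty}$-type tails decay like the dipole field $|z|^{-1}$ when $\int\omega\ne0$ is excluded. This is the step I am least sure of. If it fails, I would invoke that $\|u\|_{L^p}\to0$ forces local vorticity mass $\sup_{x_0}\int_{|x-x_0|\le1}|\omega|$ to vanish weakly, contradicting energy concentration.

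For the examples: for the first class, take $\om_{in}^{(\ep)}=-\ep^{a}\phi(z/\ep)$ (negative, near $y=0$) or a negative bump of height $h$ on a long strip of length $L$. Then $\int(\om_{in})_+=0$, and $\mathcal H\ge\|u\|_2^2-0$ since $y^2\omega\le0$ is small. Concretely, a negative layer of width $\ell$ near $|y|\approx$ with profile chosen so $\|u\|_2^2>|\int y^2\omega|$; scaling $\omega=A\phi(x/L,y/\ell)$ gives $\|u\|_2^2\sim A^2\ell^3L$ versus $A\ell L$, and $W^{s,p}$ norm $\sim A\ell^{-s}(\ell L)^{1/p}$. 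Choosing parameters with $s<1+1/p$ makes the norm vanish. For the second class, $\mathcal H[\lambda\omega_\ast]=\lambda^2\|u_\ast\|_2^2+\lambda\int y^2\omega_\ast$ and $\int(\lambda\omega_\ast)_+=|\lambda|\int(\pm\omega_\ast)_+$, so the quadratic term dominates for large $|\lambda|$.
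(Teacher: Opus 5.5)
\textbf{The case $2<p\le\infty$ is not proved.} Your $L^2$ step matches the paper's. Your treatment of $1\le p<2$ via $\delta\le\|u\|_{L^p}^p\|u\|_{L^\infty}^{2-p}$ is fine. For $2<p\le\infty$ you leave only a speculative weak-type/dipole-tail argument, and the obstacle you identify does not exist in the channel. The Dirichlet Biot--Savart kernel is exponentially screened: $|K(z,z')|\lesssim e^{-\frac{\pi}{2}|x-x'|}$ for $|z-z'|\ge1$ and $|K(z,z')|\lesssim|z-z'|^{-1}$ otherwise. Hence $\sup_{z'}\int_\Omega|K(z,z')|\,dz\lesssim1$ and $\|u(t)\|_{L^1}\lesssim\|\omega(t)\|_{L^1}=\|\omega_{in}\|_{L^1}$, which is \eqref{eq:velocity-bound}. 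The claim ``$u\notin L^1$ in general'' and the $|z|^{-1}$ tails describe $\mathbb{R}^2$, not the channel. Once $u(t)$ is uniformly bounded in every $L^q$, $1\le q\le\infty$, H\"older gives $\delta\le\|u\|_{L^2}^2\le\|u\|_{L^p}\|u\|_{L^{p'}}\lesssim\|u\|_{L^p}$ for all $p$ at once, which is the paper's proof. Your fallback via local evacuation also fails. The criterion allows sign-changing data, so Proposition~\ref{prop:damping-circulation-equivalence} is unavailable. Moreover, evacuation alone does not contradict $\|u\|_{L^2}^2\ge\delta$ without exactly the missing $L^q$ control for some $q<2$.

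\textbf{The first family of examples is not established.} With your own exponents ($\|u\|_{L^2}^2\sim A^2\ell^3L$ against $|\int y^2\omega|\sim A\ell L$, i.e.\ a layer at distance of order one from $y=0$), the criterion forces $A\gtrsim\ell^{-2}$. Then $A\ell^{-s}(\ell L)^{1/p}\to\infty$ as $\ell\to0$ for every $s\ge0$. Your isotropic option $-\varepsilon^{a}\phi(z/\varepsilon)$ needs $\varepsilon^{a}|\log\varepsilon|\gtrsim1$. Its $W^{s,p}$ norm $\sim\varepsilon^{a+2/p-s}$ can then only vanish for $s<2/p$ (up to logarithmic adjustments of the amplitude), which is short of $1+\frac1p$ when $p>1$.

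The missing idea is to place a thin, horizontally elongated negative layer on the stagnation line, where the moment gains a factor $\ell^2$. Take $\omega_{in}=-A\phi(x,y/\ell)$ with $0\le\phi\in C_c^\infty$ supported in $[-\tfrac12,\tfrac12]\times[-1,1]$. Then $-G\gtrsim1$ on the support, so $\|u_{in}\|_{L^2}^2\gtrsim(A\ell)^2$, while $|\int y^2\omega_{in}|\lesssim A\ell^3$. Choosing $A=K\ell$ with $K$ large satisfies the criterion, and $\|\omega_{in}\|_{W^{s,p}}\lesssim K\ell^{1+\frac1p-s}\to0$. The paper achieves the same with the anisotropic scaling $-\varepsilon^{1-3\delta}\eta(x/\varepsilon^{2\delta},y/\varepsilon)$ and the logarithmic interaction energy.

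Your argument for $\lambda\omega_\ast$ with $|\lambda|$ large is correct.
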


Earlier non-damping results near Couette flow were provided by nearby steady states or traveling waves at low regularity~\cite{MR2796139,CastroLear23Travellingwaves,2605.19971}. By contrast, the criterion~\eqref{eq:thm:channel-hamiltonian} is an \emph{open} condition in the Yudovich class,  thus giving, to our knowledge, the first open sets of  non-damping examples near Couette flow.

\subsection{A sign dichotomy in the plane}

Passing from the infinite channel to the whole plane changes the dynamics substantially because the vorticity is no longer confined in the vertical direction.

For all subsequent results on $\R^2$, we assume that
\begin{equation}\label{eq:intro_fix_R2_barycenter}
\omega_{in} \not\equiv 0, \quad  \omega_{in} \in L^1(\mathbb{R}^2) \cap L^\infty(\mathbb{R}^2), \quad \int_{\mathbb{R}^{2}}\vert{}z\vert{}^{2}\vert{}\omega_{in}(z)\vert{}dz < \infty  .
\end{equation}

Under these assumptions, the two signs of the perturbation lead to opposite long-time behavior.

First, for positive perturbations, we obtain vorticity evacuation and velocity decay along a set of times of density one, together with  quantitative dispersion estimates.

\begin{theorem}[{Dispersion and damping in $\R^2$}]\label{thm:intro-R2-positive-damping} 
Consider \eqref{eq:euler} in the whole plane $\R^2$. 
There exists $\ep_0>0$   such that  if  the initial data $\om_{in} \in L^1\cap L^\infty(\R^2)$ are nonnegative and satisfy \eqref{eq:intro_fix_R2_barycenter} and the bounds
$$
  \| \om_{in} \|_{L^\infty (\R^2  )}+\|\om_{in} \|_{L^1(\R^2)} \le \ep_0,
$$
then the corresponding Yudovich solution $\om(t)$ exhibits:
\begin{enumerate}
    
\item  \textbf{Enhanced dispersion:} There exists   $C_{in}>0$ depending on $\om_{in}$ such that
\begin{equation}\label{eq:thm:intro-R2-positive-damping_4}
    \int_{\mathbb R^2}y^2\omega(t,z)\,dz \ge C_{in}\log \langle t \rangle \quad \text{and} \quad \int_{\mathbb R^2}x^2\omega(t,z)\,dz \ge C_{in}t^2\log \langle t \rangle  
\end{equation}
for all $t\ge 0$.
 
\item \textbf{Evacuation and damping:}
there exists a set of times $\mathcal{T}\subset[0,\infty)$ of asymptotic density one $(\lim_{t\rightarrow\infty}\frac{\vert{}\mathcal{T}\cap[0,t]\vert{}}{t}=1)$ such that 
\begin{equation}
  \lim_{t \in \mathcal T, t\to \infty}  \sup_{x_0\in\R}
    \int_{|x-x_0|\le1}
    \om(t,z)\,dz
    =0
\end{equation}
and for any $2<p \le \infty$,
$$
   \lim_{t\in\mathcal{T}, t\rightarrow\infty}     \|u(t)\|_{L^p(\R^2)} = 0.
$$

\end{enumerate}

\end{theorem}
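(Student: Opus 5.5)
We will work with Lyapunov-type identities for moments of the nonnegative vorticity, combined with conservation laws. The relevant conserved quantities on $\R^2$ are the mass $M=\int\omega$, the $y$-barycenter $\int y\,\omega$, the $x$-impulse-type quantity, and the Hamiltonian $\mathcal H[\omega]=\int|u|^2+\int y^2\omega$, up to the usual logarithmic renormalization of the kinetic energy in the plane.

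\textbf{Step 1 (moment identities).} Using $\partial_t\omega+(y+u_1)\partial_x\omega+u_2\partial_y\omega=0$, we compute
\[
\frac{d}{dt}\int y^2\omega=2\int y\,u_2\,\omega,\qquad
\frac{d}{dt}\int xy\,\omega=\int y^2\omega+\int(y u_1+x u_2)\omega .
\]
The nonlinear terms are symmetrized through the Biot–Savart kernel $K(z-z')=\frac{(z-z')^\perp}{2\pi|z-z'|^2}$. For instance,
\[
\int y\,u_2\,\omega=\frac{1}{4\pi}\iint\frac{(x-x')(y-y')}{|z-z'|^2}\,\omega\omega' ,
\]
which is bounded by $M^2/(4\pi)$. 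Hamiltonian conservation then gives
\[
\frac{d}{dt}\int y^2\omega=-\frac{d}{dt}\int|u|^2 .
\]
Since $\omega\ge0$ and $\int|u|^2$ behaves like $-\frac{M^2}{2\pi}\iint\log|z-z'|\,\omega\omega'$, the growth of $\int y^2\omega$ is exactly compensated by the decrease of the pair interaction energy, that is, by the vorticity spreading apart.

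\textbf{Step 2 (enhanced dispersion).} We aim for the key Lyapunov functional
\[
L(t)=\iint\log|z-z'|\,\omega(z)\omega(z')\,dz\,dz' .
\]
Because the Hamiltonian is conserved, $\int y^2\omega$ grows iff $L$ grows. To show that $L$ must grow logarithmically, we exploit the Couette shear: $\frac{d}{dt}\iint(x-x')^2\omega\omega'\approx 2\iint(x-x')(y-y')\omega\omega'$, and the relative $y$-spread of pairs produces $|x-x'|\gtrsim t|y-y'|$ for most pairs. This forces $\frac{d}{dt}\int y^2\omega\gtrsim \frac{1}{t}\iint\frac{(y-y')^2}{\dots}\omega\omega'$, and the smallness $\|\omega_{in}\|_{L^1}\le\ep_0$ lets us absorb the nonlinear error terms. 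The positivity of the variance $\iint(y-y')^2\omega\omega'$ is uniform in time, since it cannot collapse: the $L^\infty$ bound together with mass conservation prevents concentration on a line. Integrating yields $\int y^2\omega\ge C_{in}\log\langle t\rangle$. Then the identity for $\frac{d^2}{dt^2}\int x^2\omega$, which is dominated by $2\int y^2\omega$ plus controllable errors, integrates twice to give the $t^2\log\langle t\rangle$ bound.

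\textbf{Step 3 (evacuation on a density-one set).} From Step 2, a time average of the local mass $\sup_{x_0}\int_{|x-x_0|\le1}\omega$ is controlled. Mass in a unit strip is spread in $y$ at rate $\log t$, and shear separates different $y$-levels by $t\,\Delta y$, so the strip content is at most $\lesssim (t\,\delta)^{-1}+\omega(\{|y-\bar y|\le\delta\})$. Bounding the average of the second term by the dispersion estimates gives $\frac1T\int_0^T F(t)\,dt\to0$ for $F(t)=\sup_{x_0}\int_{|x-x_0|\le1}\omega$. A standard Chebyshev/Koopman–von Neumann argument then produces a density-one set $\mathcal T$ on which $F\to0$.

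\textbf{Step 4 (velocity).} For $p>2$, split the Biot–Savart integral into near and far parts:
\[
|u|\lesssim\int_{|z-z'|<r}\frac{\omega'}{|z-z'|}+r^{-1}M .
\]
The near part is interpolated between $L^\infty$ and the local mass, while the far part is made small using mass spread over large $|y|$. Taking $r\to\infty$ slowly gives $\|u\|_{L^p}\to0$ along $\mathcal T$. Note that $p\le2$ fails because $u\notin L^2$ when $M\ne0$.

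\textbf{Main obstacle.} The hardest step is the lower bound in Step 2: turning Hamiltonian conservation into a quantitative \emph{growth} of $\int y^2\omega$. This requires excluding the possibility that the vorticity organizes into a coherent rotating vortex, which would stay confined. Here the smallness of $M$ relative to the Couette shear strain, together with $\omega\ge0$, is essential. The planned route is to show that self-induced rotation rates of order $M/\text{size}^2$ cannot cancel the shear once the support spreads.
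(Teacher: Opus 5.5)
There is a genuine gap: Steps 2 and 3, which carry all the content of the theorem, are heuristics rather than arguments.

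\textbf{Step 2.} Your Step 2 does not establish the dispersion lower bound; it restates the heuristic. The kernel in
$\frac{d}{dt}\int y^2\om=\frac{1}{2\pi}\iint\frac{(x-x')(y-y')}{|z-z'|^2}\om(z)\om(z')\,dz\,dz'$
has no sign. The claim that $(x-x')(y-y')>0$ with $|x-x'|\gtrsim t|y-y'|$ for ``most pairs'' comes from linear Couette transport. The self-induced drift is of size $O(\ep_0)$ and acts on the same time scale as the shear separation of nearby $y$-levels. Excluding coherent structures is exactly what you leave as a ``planned route''.

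Your reason for a uniform lower bound on the $y$-variance is also false in $\R^2$. A long thin horizontal filament has fixed mass, bounded amplitude and arbitrarily small $y$-variance, and horizontal stretching is precisely what the shear produces.

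The missing ingredient is a sign-definite Lyapunov structure, which the paper builds as follows.
\begin{itemize}
\item Take a bounded increasing weight $H$ and the $H$-center $X_0(t)$, defined by $\int H(x-X_0(t))\om=0$; this choice kills the $X_0''$ term.
\item Then $\mathcal M_0=\int yH(x-X_0(t))\om$ satisfies $\frac{d}{dt}\mathcal M_0\ge\frac12\mathcal D_0+\mathcal N^y_{X_0}$. Here $\mathcal N^y\ge0$ by symmetrization, because $(H(x)-H(x'))(x-x')\ge0$ and $\om\ge0$. The $u^x$ term is absorbed using smallness.
\item A coercivity lemma gives $\mathcal D_0+\mathcal N^y_{X_0}\gtrsim(1+\int|x-X_0|\om)^{-1}$.
\item The Hamiltonian \emph{upper} bound $\int y^2\om\lesssim1+\log\langle t\rangle$, which you never derive, yields $\int|x-X_0|\om\lesssim1+t\sqrt{\log\langle t\rangle}$.
\item Integrating $1/(t\sqrt{\log t})$ gives $\mathcal M_0\gtrsim\sqrt{\log t}$. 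Hence $\int|y|\om\gtrsim\sqrt{\log t}$, and Cauchy--Schwarz gives $\int y^2\om\gtrsim\log t$.
\end{itemize}
Once that bound is in hand, your route to the $x$-moment is a valid alternative to the paper's functional $\int F(x-X_0(t))\om$ with $F'=H$. That route integrates $\frac{d}{dt}\int xy\om=\int y^2\om+O(\|\om\|_{L^1}^2)$ and $\frac{d}{dt}\int x^2\om=2\int xy\om+O(\|\om\|_{L^1}^2)$.

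\textbf{Step 3.} Step 3 fails as written. Second-moment lower bounds only say that \emph{some} mass goes far. They do not exclude a packet of mass $\delta$ occupying a moving unit window for a positive proportion of times. They also give no control of $\om(\{|y-\bar y|\le\delta\})$.

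The bound ``strip content $\lesssim(t\delta)^{-1}+\om(\{|y-\bar y|\le\delta\})$'' is unjustified for two reasons. A unit window at time $t$ can receive mass from anywhere. The nonlinear displacement $O(\ep_0t)$ also competes with the shear displacement $t|\Delta y|$.

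What is needed is a dissipation mechanism that detects concentration uniformly in horizontal position. The paper supplies it as follows.
\begin{itemize}
\item Introduce the continuum of moving quantiles $X_a(t)$, defined by $\int H(x-X_a(t))\om=a$ for $|a|<\|\om_{in}\|_{L^1}$. Set $\mathcal M_a=\int yH(x-X_a)\om$ and $\mathcal D_a=\int(y-X_a')^2H'(x-X_a)\om$.
\item The same monotonicity, together with $|\mathcal M_a|\lesssim(\int y^2\om)^{1/2}\lesssim1+\sqrt{\log\langle t\rangle}$, gives $\int_0^T\int\mathcal D_a\,da\,dt\lesssim1+\sqrt{\log\langle T\rangle}$.
\item Mass $\ge\delta$ in a unit strip forces a set of labels of measure $\gtrsim\delta$ with $X_a(t)$ in the window. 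This gives $\int\mathcal D_a(t)\,da\gtrsim\delta^4\|\om_{in}\|_{L^\infty}^{-2}$.
\item Hence the bad times have measure $O(\sqrt{\log T})$, and a diagonal construction gives the density-one set.
\end{itemize}
This step uses only the Hamiltonian upper bound, not the dispersion lower bound.

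\textbf{Step 4.} Your Step 4 is fine; it is the content of Proposition \ref{prop:damping-circulation-equivalence}. But it has nothing to act on until Step 3 is repaired.
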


Unlike Theorem \ref{thm:intro-channel-positive-damping}, this result does not give full-time velocity damping. Instead, it reveals a nonlinear dispersion mechanism in the whole plane, with both the vertical and horizontal second moments gaining an extra logarithmic factor compared with the linearized Couette evolution.

Using the filamentation mechanism from \cite{MR4350517}, the enhanced dispersion above also leads to the growth of vorticity derivatives, even under arbitrary shear modulation. See Section \ref{subsec:notation} for the (standard) definitions of the Gevrey spaces $\mathcal G^\lambda_\sigma$ and the anisotropic H\"older spaces $C_x^s(\R^2) $ and $C_y^s(\R^2) $

\begin{theorem}[Norm growth under arbitrarily small perturbations]
\label{thm:intro-R2-Gevrey-norm-growth}
Let $\ep_0>0$ be the constant in
Theorem~\ref{thm:intro-R2-positive-damping}. Fix any Gevrey space $\mathcal G^\lambda_\sigma $ for some
$0<\sigma<1$ and $\lambda>0$. For any nonzero initial
vorticity $\om_{b}$ satisfying
\[
    0\le \om_{b}\in C_c^\infty(\R^2),
    \qquad
    \|\om_{b}\|_{L^1(\R^2)}
    +
    \|\om_{b}\|_{L^\infty(\R^2)}
    \le
    \frac12 \ep_0,
\]
and any $\delta>0$, there exists a nonnegative perturbation
$\om_{p}\in C_c^\infty(\R^2) $
with
\begin{equation}\label{eq:intro-R2-Gevrey-small-perturbation}
    \|\om_{p}\|_{\mathcal G^\lambda_\sigma}
    \le
    \delta,
\end{equation}
such that the solution  $ \om(t)$ of \eqref{eq:euler} with
initial vorticity $ \om_{in}: =\om_{b}+ \om_{p} $ satisfies for any $s>0$,
\begin{equation}\label{eq:intro-R2-Cxs-growth}
    \| \om(t)\|_{C_x^s}
    \gtrsim_{s, \om_{in}}
    (\log \langle t\rangle)^{\frac{s}{2}}
\end{equation}
and
\begin{equation}\label{eq:intro-R2-Cys-growth}
    \| \om(t)\|_{C_y^s}
    \gtrsim_{s, \om_{in}}
    \left(
        t\sqrt{\log \langle t\rangle}
    \right)^s.
\end{equation}

\end{theorem}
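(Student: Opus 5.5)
The plan is to combine the enhanced dispersion of Theorem~\ref{thm:intro-R2-positive-damping} with conservation of the distribution function of $\om$ under area-preserving transport, in the spirit of the filamentation argument of \cite{MR4350517}. First we choose the perturbation. We need a nonnegative $\om_p\in C_c^\infty$ with $\|\om_p\|_{\mathcal G^\lambda_\sigma}\le\delta$ such that $\om_{in}=\om_b+\om_p$ has a distinguished level set structure. Concretely, we take $\om_p$ to be a small Gevrey bump placed (say) at a point where $\om_b=0$ near the boundary of $\Supp\om_b$, or otherwise arranged so that $\om_{in}$ attains some value $a>0$ on a set of positive measure $A$ while $\om_{in}\le a/2$ on a set $B$ whose complement has finite measure. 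This is possible because Gevrey class $\sigma<1$ admits compactly supported functions. Since $\om_p\ge0$ and small, $\om_{in}$ still satisfies the hypotheses of Theorem~\ref{thm:intro-R2-positive-damping}, with $\ep_0$ halved. The barycenter condition \eqref{eq:intro_fix_R2_barycenter} is automatic by compact support. Hence \eqref{eq:thm:intro-R2-positive-damping_4} holds.

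Second, we translate the moment growth into spatial spreading of a level set of fixed mass. Because $\om(t)=\om_{in}\circ X_t^{-1}$ with $X_t$ measure preserving, the superlevel set $\{\om(t)\ge a\}$ has measure at least $|A|>0$ for all $t$. Total mass is also conserved and at most $\ep_0$. From $\int y^2\om\ge C\log\langle t\rangle$ and $\int x^2\om\ge Ct^2\log\langle t\rangle$, I would try to show that a definite portion of the vorticity mass is located at vertical distance $\gtrsim\sqrt{\log t}$, respectively horizontal distance $\gtrsim t\sqrt{\log t}$, from a bounded region containing a fixed portion of mass. The cleanest route is to choose $\om_p$ so that the "high level" region and the "low level" region are both carried along. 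Then the second moment lower bound forces two points $z_1,z_2$ to lie at distance $R(t)$, with $\om(t,z_1)\ge a$ and $\om(t,z_2)\le a/2$, and with the segment between them lying in the relevant direction. Controlling where mass sits would use the already-established framework, for example conservation of the center of mass and of $\int y\om$ in $\R^2$.

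Third, we convert the geometry into H\"older growth, and this is the main obstacle. The level set $\{\om\ge a\}$ has bounded measure $m$ but must stretch over length $\gtrsim R(t)$ in $y$ (resp.\ $x$). Therefore its thickness in the transverse direction must be $\lesssim m/R(t)$ on a substantial portion. More precisely, there exist a horizontal (for $C_x^s$) or vertical (for $C_y^s$) line and points at distance $h\lesssim 1/R$ where $\om$ jumps from $\ge a$ to $\le a/2$. This gives $\|\om\|_{C^s}\gtrsim a\,h^{-s}$. With $R\sim\sqrt{\log t}$ we get \eqref{eq:intro-R2-Cxs-growth}, and with $R\sim t\sqrt{\log t}$ we get \eqref{eq:intro-R2-Cys-growth}. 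I expect the difficulty to be making the slicing argument rigorous. One needs that the high level set is spread along the moment direction, not merely that some of the mass is far away. I would first try a Fubini/coarea argument: project $\{\om\ge a/2\}$ onto the axis, note that the projection has length $\gtrsim R$ by the moment bound (after choosing $\om_p$ so that the support is essentially a single level of height $a$, making the mass and the superlevel measure comparable), and conclude that a typical transverse fiber has length $\lesssim m/R$. Each such fiber endpoint then yields the needed oscillation.
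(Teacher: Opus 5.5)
Your Step 3 is essentially the paper's endgame: Fubini on a transported set of fixed area gives a thin fiber, and a jump across that fiber gives the H\"older bound. Steps 1--2, however, do not supply what Step 3 needs, and the missing idea is \emph{connectedness}.

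\textbf{Why the moment bound does not give a long projection.} The claim that the projection of $\{\om(t)\ge a/2\}$ onto the $y$-axis has length $\gtrsim\sqrt{\log t}$ ``by the moment bound'' is false in general. Two compact clumps near $y=\pm\sqrt{\log t}$ have a large second moment, a short projection, and no thin fibers at all. What saves the argument is that $\Phi_t$ is a homeomorphism, so the image of a \emph{connected} set is connected. If that image meets both $\{y\ge c_0\sqrt{\log t}\}$ and $\{y\le -c_0\sqrt{\log t}\}$, its projection contains the whole interval between them.

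Moreover, the input you use, \eqref{eq:thm:intro-R2-positive-damping_4}, cannot force any particular set to reach those regions. A mass $\eta(t)\to0$ at height $\sqrt{\log t/\eta(t)}$, with the rest near the origin, is not excluded by $\int y^2\om\approx\log t$. This remains true even with the upper bound of Lemma~\ref{lem:R2-logarithmic-y2-upper}. The paper instead uses Theorem~\ref{thm:R2-positive-sharp-moment-growth}: a definite mass $c_0$ lies in $\{\pm y\ge c_0\sqrt{\log t}\}$, and likewise in $\{\pm x\ge c_0t\sqrt{\log t}\}$. Here $c_0$ depends only on a lower bound for $\|\om_{in}\|_{L^1}$ and an upper bound for the second moment, so it is uniform over the family of perturbed data.

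\textbf{Why your perturbation cannot produce the connected set.} Since $\|\om_p\|_{L^\infty}\lesssim_{\lambda,\sigma}\|\om_p\|_{\mathcal G^\lambda_\sigma}\le\delta$, $\om_{in}$ is uniformly close to the arbitrary $\om_b$. You therefore cannot make its support ``essentially a single level of height $a$''. The superlevel sets of $\om_b$ may be disconnected, and a small bump near $\partial\Supp\om_b$ does not change this.

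The paper's perturbation plays exactly the missing role. Take $\Supp\om_b\subset B_R$ and $\om_p=\ep G(z/4R)$, a wide, faint, radial Gevrey bump with $G(1/2)>G(3/4)>0$.
\begin{enumerate}
\item Choose $\ep$ so that $\|\om_p\|_{L^1}\le c_0/2$; this is legitimate because $c_0$ does not depend on $\ep$. Then the far mass must include part of $\Phi_t(\Supp\om_b)\subset\Phi_t(B_R)$, so the connected set $\Phi_t(B_R)$ spans $[-c_0\sqrt{\log t},c_0\sqrt{\log t}]$ in $y$.
\item Fubini with $|\Phi_t(B_{3R})|=|B_{3R}|$ gives a height $y_t$ in this range at which the slice of $\Phi_t(B_{3R})$ has length $\lesssim(\log t)^{-1/2}$.
\item Since $\om_b=0$ off $B_R$, $\om(t)$ equals the distinct constants $\ep G(1/2)$ and $\ep G(3/4)$ on the transported circles $\Phi_t(\partial B_{2R})$ and $\Phi_t(\partial B_{3R})$. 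On the line $y=y_t$, take the components of the slices of $\Phi_t(B_{2R})$ and $\Phi_t(B_{3R})$ through a point of $\Phi_t(B_R)$. Their right endpoints lie on these two curves, within distance $\lesssim(\log t)^{-1/2}$ of each other.
\end{enumerate}
The same construction with horizontal mass bounds gives the $C^s_y$ estimate.

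Finally, for $s>1$ the jump alone gives only $h^{-1}$. To reach $h^{-s}$ you also need the interpolation $\|f\|_{C^1}\lesssim\|f\|_{L^\infty}^{1-1/s}\|f\|_{C^s}^{1/s}$ together with conservation of $\|\om\|_{L^\infty}$.
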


Theorem~\ref{thm:intro-R2-Gevrey-norm-growth} shows that high-regularity asymptotic stability cannot in general hold on $\R^2$. Indeed, the growth of the horizontal norms cannot be removed by modulation with a shear: any change of coordinates $x \mapsto x+ \Phi(t,y)$ leaves the $C_x$-norm unchanged. This is reminiscent of the instability result~\cite{MR4630602} in the periodic setting, where finite-time norm inflation below the critical Gevrey threshold also persists under arbitrary shear modulation. However, our result shows that  the vorticity can exhibit unbounded growth (modulo any shear), while the velocity still undergoes inviscid damping.

For non-positive perturbations, the Hamiltonian acts in the opposite direction and prevents such spatial spreading.

\begin{theorem}[{Confinement and non-damping  in $\R^2$}] 
\label{thm:intro-R2-trapping}
Consider \eqref{eq:euler} in the whole plane $\R^2$. For any non-positive   and nonzero  initial data $\om_{in} \in L^1\cap L^\infty(\R^2)$ satisfying \eqref{eq:intro_fix_R2_barycenter}, the corresponding Yudovich solution $\om(t)$ exhibits:
\begin{itemize}
    \item  \textbf{Vorticity confinement:} The vertical second moment remains uniformly bounded
    \begin{equation}\label{eq:thm:intro-R2-nondamping_2}
    \limsup_{t\to\infty} 
    \int_{\mathbb R^2}y^2 |\omega(t,z)|\,dz
    <
    +\infty.
\end{equation} 
There exists a center trajectory $t\mapsto z_{t}=(x_{t},y_{t})$  such that for every $\ep>0$, there is a radius $R_{\ep}>0$ for which
\begin{equation}\label{eq:thm:intro-R2-nondamping_3}
    \int_{|z-z_t| \le R_{\ep}}
    |\om(t,z)|\,dz
    \ge
    \|\om_{in}\|_{L^1(\R^2)}-\ep \quad \text{for all } t\ge0.
\end{equation}
Moreover, the center trajectory $z_t=(x_t,y_t) \in \R^2$ satisfies 
\begin{equation}\label{eq:thm:intro-R2-nondamping_4}
    \lim_{t \to \infty} \left|\frac{x_t}{t} - v_{in}\right|=0
\quad \text{and} \quad
    \limsup_{t \to \infty} |y_t| <\infty,
\end{equation}  
where $v_{in} = \frac{ \int_{\R^2} y\om_{in}(z)\,dz}{\int_{\R^2} \om_{in}(z)\,dz}$ denotes the vertical barycenter of $\om_{in}$.

\item \textbf{Velocity non-damping:} For any $2 < p \le \infty$ and   $c >|v_{in}|$, the velocity field $u(t)$ satisfies
\begin{equation}\label{eq:thm:intro-R2-nondamping_1}
    \liminf_{t\to \infty} \|u(t)\|_{L^p(|z|\le ct)} >0.
\end{equation}

\end{itemize}

\end{theorem}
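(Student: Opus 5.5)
The plan is to write $\om=-f$ with $f\ge0$ and combine the conserved quantities of \eqref{eq:euler} on $\R^2$: the mass $M=\int f$, the vertical momentum $\int y\om$, the horizontal impulse, which evolves linearly as $\int x\om(t)=\int x\om_{in}+t\int y\om_{in}$, and the Hamiltonian. On $\R^2$ the Hamiltonian takes the form
\[
\int_{\R^2} y^2\om\,dz+\frac{1}{2\pi}\iint \log|z-z'|\,\om(z)\om(z')\,dz\,dz'
\]
with the sign of the interaction term fixed by the requirement $\frac{d}{dt}=0$. I will verify this directly: $\frac{d}{dt}\int y^2\om=2\int y u_2\om$, and this must cancel against the time derivative of the interaction energy. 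The consistency check is that for $\om\ge0$ this sign makes the spreading of the log-interaction, which grows like $\log t$ as the shear stretches $x$, force $\int y^2\om\gtrsim\log t$, in agreement with Theorem \ref{thm:intro-R2-positive-damping}.

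\textbf{Step 1 (confinement).} For $\om=-f$ the identity becomes
\[
\int y^2 f\,dz+\frac1{2\pi}\iint\log|z-z'|\,f(z)f(z')\,dz\,dz' = C_{in},
\]
which is finite by \eqref{eq:intro_fix_R2_barycenter}. Split $\log=\log_+-\log_-$. The negative part is bounded by
\[
\iint \log_-|z-z'|\,ff\lesssim \|f\|_{L^\infty}\|f\|_{L^1},
\]
since $\log_-$ is locally integrable. Both remaining terms are nonnegative, so
\[
\int y^2 f\le C \quad\text{and}\quad \iint\log_+|z-z'|\,f(z)f(z')\le C
\]
uniformly in $t$. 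This gives \eqref{eq:thm:intro-R2-nondamping_2}.

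\textbf{Step 2 (center trajectory).} By Chebyshev applied to the second bound, the $f\otimes f$ mass of pairs with $|z-z'|>R$ is at most $C/\log R$. Choose $z_t$ maximizing, or nearly maximizing, $\int_{|z-z_t|\le R}f$. Averaging in $z'$ shows that some ball of radius $R$ carries mass at least $M-C/(M\log R)$, which yields \eqref{eq:thm:intro-R2-nondamping_3} with $R_\ep=\exp(C/\ep)$. For \eqref{eq:thm:intro-R2-nondamping_4}:
\begin{itemize}
\item \emph{Vertical.} The bounded $y^2$-moment together with conservation of $\int y f=-Mv_{in}$ forces $|y_t|$ to stay bounded; otherwise most of the mass would sit at large $|y|$.
\item \emph{Horizontal.} I will track a smoothed local barycenter
\[
X_t=\frac{\int x\,\chi((z-z_t)/R)f}{\int\chi((z-z_t)/R) f}.
\]
Its velocity is the local mean of $y+u_1$ plus flux terms through the cutoff. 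The $y$-average on the blob equals $v_{in}$ up to $O(\ep^{1/2})$, using the $y^2$ bound and smallness of the tail. Moreover $\|u\|_{L^\infty}\lesssim\|f\|_{L^1}^{1/2}\|f\|_{L^\infty}^{1/2}$, and the self-induced part of $u_1$ averages to zero over the blob by antisymmetry of Biot--Savart. Only the tail-induced part survives, and it is $O(\ep)$-small in the average.
\end{itemize}
Hence $\limsup|\dot X_t-v_{in}|\lesssim \ep^{1/2}$, so $x_t/t\to v_{in}$ after letting $\ep\to0$.

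\textbf{Step 3 (non-damping).} Fix $c>|v_{in}|$ and $\ep=M/4$, and let $\varphi$ be a cutoff equal to $1$ on $B(z_t,R_\ep)$ and supported in $B(z_t,2R_\ep)$, with $|\nabla\varphi|\lesssim R_\ep^{-1}$. By Step 2, $|z_t|\le(|v_{in}|+o(1))t+O(1)$, so $\Supp\varphi\subset\{|z|\le ct\}$ for large $t$. Integrating by parts against the Biot--Savart relation gives
\[
\frac M2\le\Big|\int\om\varphi\Big|=\Big|\int u\cdot\nabla^\perp\varphi\Big|\le \|u\|_{L^p(|z|\le ct)}\|\nabla\varphi\|_{L^{p'}},
\]
which yields \eqref{eq:thm:intro-R2-nondamping_1}. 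The restriction $p>2$ is presumably needed only for a priori finiteness of $\|u\|_{L^p}$ in the plane, since $u\notin L^2$ generically when $M\ne0$.

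\textbf{Main obstacle.} I expect the hardest part to be the horizontal drift in Step 2. A small tail of mass can sit arbitrarily far away in $x$ and distort the global barycenter, so the conserved impulse cannot be used directly, and the local barycenter $X_t$ must be controlled despite flux through the moving cutoff. If the flux terms prove troublesome, my first alternative is to redefine $z_t$ by time-averaging and to use the uniform bound on $\dot X_t$ together with a continuity argument on the concentration radius.
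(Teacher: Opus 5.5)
Steps 1 and 3 and the concentration part of Step 2 are sound. The gap is the horizontal drift $x_t/t\to v_{in}$, and Step 3 inherits it, because the inclusion $\Supp\varphi\subset\{|z|\le ct\}$ needs exactly this.

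\textbf{Why your argument does not close.} Your local barycenter $X_t$ uses a cutoff centered at the near-maximizer $z_t$. That point is chosen independently at each time and has no time regularity, so $\dot X_t$ is not defined. Re-centering does not fix this. Two admissible cutoffs, each equal to $1$ on a ball carrying mass at least $\|\om_{in}\|_{L^1}-\ep$, give barycenters that can differ by order $\ep R_\ep$. This is enormous for $R_\ep=e^{C/\ep}$, and nothing in your construction limits how often re-centering happens. So $\limsup|\dot X_t-v_{in}|\lesssim\ep^{1/2}$ is not established. Even with a frozen center, the shear flux through the annulus is small only via $\int_{\mathrm{annulus}}|y|\,|\om|\le(\ep\int y^2|\om|)^{1/2}$, because pointwise $|y|$ there can be of size $R_\ep$. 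Your stated reason for avoiding the conserved impulse, that a small tail could sit arbitrarily far away in $x$, is not an obstruction: the uniform $y^2$ bound controls the horizontal second moment.

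\textbf{The missing idea (this is the paper's argument).} Compute
$\frac{d}{dt}\int x^2|\om|=2\int xy|\om|+2\int xu^x|\om|$.
\begin{itemize}
\item The second term is $\lesssim\|\om_{in}\|_{L^1}^2$ after symmetrization, since $|(x-x')(y-y')|\le\frac12|z-z'|^2$.
\item The first term is $\lesssim(\int x^2|\om|)^{1/2}$ by Cauchy--Schwarz and your Step 1.
\item Gronwall then gives $\int x^2|\om(t)|\lesssim(1+t)^2$.
\end{itemize}
Next, let $(\bar x_0,\bar y_0)$ be the barycenter of $\om_{in}$ and pass to $\om(t,x+\bar x_0+t\bar y_0,y+\bar y_0)$, so that $\int x\om(t)=\int y\om(t)=0$ for all $t$. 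Split $0=\int x|\om(t)|$ into the part on $B_{R_\ep}(z_t)$ and the part on its complement, and apply Cauchy--Schwarz to the complement. This gives $(\|\om_{in}\|_{L^1}-\ep)|x_t|\lesssim\|\om_{in}\|_{L^1}R_\ep+\ep^{1/2}(1+t)$, hence $\limsup|x_t|/t\lesssim\ep^{1/2}$ for every $\ep$. No moving cutoff is needed.

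\textbf{The rest, and two small repairs.} Everything else is close to the paper. Your $\log_\pm$ split is Lemma~\ref{lem:R2-log-energy-divergence} with $R=3$. Your Chebyshev averaging is a quantitative version of the paper's contradiction argument. An isotropic cutoff suffices for $\|u\|_{L^p}$; the paper uses anisotropic cutoffs only to bound each velocity component separately. The two repairs are:
\begin{itemize}
\item Your displayed Hamiltonian has the wrong sign. Since $\frac{d}{dt}\int y^2\om=2\int yu^y\om=\frac{d}{dt}\frac1{2\pi}\iint\log|z-z'|\om(z)\om(z')$, the conserved quantity is $\int y^2\om-\frac1{2\pi}\iint\log|z-z'|\om\om$. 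Your Step 1 identity for $f=-\om$ is nevertheless the correct one, so nothing downstream breaks.
\item $z_t$ must be a single trajectory valid for all $\ep$. Fix it at $\ep=\frac14\|\om_{in}\|_{L^1}$ and enlarge radii for smaller $\ep$, using that two balls each carrying more than half the mass must intersect.
\end{itemize}
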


 A notable feature of Theorem~\ref{thm:intro-R2-trapping} is its generality. In contrast to the channel settings (both periodic and infinite), here, any non-positive perturbation, even arbitrarily small in smooth or Gevrey classes, is confined by Hamiltonian conservation.

The velocity non-damping in Theorem~\ref{thm:intro-R2-trapping} is not
merely caused by a non-vanishing horizontal zero mode. Analogous non-damping estimates are established for every component of $u$ and its first-order derivatives in Section \ref{subsec:R2 nondamping main proof}.

Theorem \ref{thm:intro-R2-positive-damping} and Theorem \ref{thm:intro-R2-trapping} therefore yield the following sign dichotomy.

\begin{table}[htbp]
\centering
\begin{tabularx}{\textwidth}{l X X}
\toprule
& $\omega_{in} \ge 0$ & $\omega_{in} \le 0$ \\
\midrule
\textbf{Vorticity} & Enhanced dispersion   & Trapping  \\
\addlinespace
\textbf{Velocity} & Density-one damping & Non-damping\\
\bottomrule
\end{tabularx}
\small
\caption{Sign dichotomy in $\mathbb{R}^2$.}
\label{tab:sign_dichotomy_academic}
\end{table}

\subsection{Background}
We discuss our work in the context of hydrodynamic stability. Due to the vast literature in this field, we only discuss works that are most related to our setting: Couette flow, unbounded shear geometries, inviscid damping. Further references can be found in the surveys
\cite{MR3974608,MR4680382}.

 \subsubsection*{Phase mixing paradigm}

Inviscid damping belongs to a broader class of relaxation phenomena in conservative evolution equations. For the 2D Euler equations, the established nonlinear theory has been developed primarily through phase mixing near shear flows~\cite{MR3415068,MR4076093,MR4740211,MR4628607} and point vortices~\cite{MR4400903}. We refer to~\cite{Case1960Couette,Zillinger2016FiniteChannel,Zillinger2017MonotoneShear,WeiZhangZhao2018Sobolev,Jia2020Gevrey,MR4302767} for representative developments in the linear theory. Within this paradigm, nonlinear control requires regularity above the Gevrey--2 threshold, whose sharpness is demonstrated by the instability constructions in~\cite{MR4630602}. We also refer to~\cite{
ChenWeiZhangZhangInhomogeneous,
ZhaoInhomogeneousDamping,
bedrossian2024uniforminvisciddampinginviscid}
for further developments on nonlinear inviscid damping beyond the homogeneous Euler setting.

Although the mechanism developed in the present work is different, it bears a qualitative resemblance to vorticity depletion, a phenomenon first predicted in~\cite{BouchetMorita2010Euler} and later rigorously established in~\cite{WeiZhangZhao2019VorticityDepletion} at the linear level.

 \subsubsection*{Vorticity escape and spatial evacuation}

In~\cite{2603.20065}, we established Yudovich-level asymptotic stability for non-stagnant shear flows in the infinite channel by proving evacuation from fixed compact sets. That result gives decay relative to a fixed observer, but does not exclude a coherent vorticity packet drifting through the channel and therefore does not imply global velocity damping. The present work treats Couette flow, whose stagnation line $y=0$ lies outside the non-stagnant framework of \cite{2603.20065}, and upgrades fixed-set evacuation to uniform evacuation over all translated observation windows. The significance of stagnation is also visible in stationary rigidity results~\cite{HamelNadirashvili2017ShearFlows,HamelNadirashvili2019LiouvillePlane,DrivasNualart2026LaminarFlow,2605.19971}. The resulting large-scale transport mechanism is reminiscent of Taylor dispersion~\cite{Taylor54}, although here the relaxation is nonlinear and entirely inviscid.

 \subsubsection*{Viscous results with non-compact geometries}
 
Non-compact shear geometries have recently also gained traction in the viscous theory for the Navier-Stokes equations near Couette flow. Following the recent breakthrough~\cite{AB25}, the viscous stability of Couette flow has been established on the plane~\cite{AB25,LLZ26}, half-plane~\cite{LiuMasmoudiZhao2026HalfPlane}, and  infinite channel~\cite{AB25,LLZ26,ChenLiMiao2026InfiniteChannel}.  Our inviscid results intersect directly with these studies: the initial data considered here satisfy the initial conditions required in some of these viscous stability results, demonstrating  that the viscous threshold exponent cannot be extended to $0$.

\subsubsection*{Vorticity confinement}

The program of studying confinement and large-time spreading of sign-definite vorticity in open domains   was initiated in the work of Marchioro and Pulvirenti~\cite{MarchioroPulvirenti1983SingularVorticity,
Marchioro1988GlobalVortices,
MarchioroPulvirenti1993Localization}. Since then confinement estimates for planar vorticity were obtained using conservation of circulation, moments, and energy~\cite{Marchioro1994VortexSupport,IftimieSiderisGamblin1999PlanarVorticity,IftimieLopesFilhoNussenzveigLopes2007Confinement,
ChoiDenisov2019InfiniteCylinder}. Even though these works do not concern  inviscid damping near a background shear, their use of the vorticity sign, weighted moments, and logarithmic or renormalized energies \cite{Turkington1987} has inspired the techniques used here.

\subsubsection*{Filamentation and norm growth}

Filamentation of vorticity interfaces is a classical mechanism for the creation of small scales in 2D Euler flows, with early studies including the work of Dritschel~\cite{MR988302}. Recently, stability-based versions of this mechanism were used in~\cite{MR4168275,MR4555167,MR4350517}  to produce norm growth, which inspired our construction in Theorem \ref{thm:intro-R2-Gevrey-norm-growth}.  Notably, the implicitly defined center introduced in \cite{jyz2025superlineargradientgrowth2d} has inspired the $H$--quantile used here.

\subsection{Monotonicity and Lyapunov functionals}

We now describe the main strategy of the proof in the damping theorems. The basic idea is that positive vorticity cannot remain localized without producing an   increase in a suitable weighted moment. This creates a dissipation-like mechanism in our setting. We discuss below the specific strategies adapted to the geometry of each domain.

\subsubsection*{The infinite channel: moving functionals for uniform evacuation}

To detect vorticity localized away from the stagnation line $y=0$ in the channel, let $H$ be a bounded increasing function of the horizontal variable and define
$$
\mathcal M(t)=\int yH(x)\omega(t,x,y) \quad \text{and} \quad
\mathcal D(t)=\int y^2H'(x)\omega(t,x,y) .
$$
Differentiating along the flow produces key monotonicity in the perturbative regime:
\begin{equation}\label{eq:intro_mono}
\frac{d}{dt}\mathcal M(t)\geq \frac12 \mathcal D(t). 
\end{equation}
The sign condition $\omega\geq0$ is crucial in proving \eqref{eq:intro_mono}: after symmetrization, the vertical Biot--Savart interaction contributes with a favorable sign, while the remaining nonlinear term is absorbed by the smallness of the vorticity.

In the channel, $\mathcal M(t)$ is uniformly bounded, so the total dissipation  $\int_0^\infty \mathcal D(t)\,dt$  must remain finite. This  prevents a positive amount of vorticity from trapping near any fixed horizontal location. This already proves a form of local  convergence/stability in the spirit of \cite{2603.20065}. 

However, proving uniform  local evacuation requires tracking any drifting packet. We therefore shift to a moving functional along a linear trajectory $X(t)=X_0+ct$:
$$
\mathcal M_X(t)=\int (y-c)H(x-X(t))\omega(t ) 
\,\, \text{and} \,\,
\mathcal D_X(t)=\int (y-c)^2H'(x-X(t))\omega(t ) .
$$
Here $y-c$ measures the difference between the background transport speed and the speed of the moving observer. The same sign structure yields $
\frac{d}{dt}\mathcal M_X(t)\geq \frac12 \mathcal D_X(t).$

If uniform local evacuation fails, then a fixed amount of vorticity remains concentrated near a sparsely chosen sequence of space-time points $(t_n,z_n)$.  Joining these points by a piecewise linear trajectory,  the moving monotonicity estimate on each segment  gives total finite dissipation $\sum_n\int_{t_n}^{t_{n+1}} \mathcal D_n(t)\,dt<\infty.$

On the other hand, concentration at these points forces a uniformly positive dissipation on each segment, causing the sum to diverge and yielding a contradiction. This establishes uniform local evacuation and global velocity damping in the channel.

\subsubsection*{The whole plane: a continuum of moving functionals}

 In the whole plane, the vorticity may also spread vertically, so there is no bounded range of transport speeds and the piecewise linear observing path used in the channel no longer applies. 
 
A natural analogue of the channel functional for a general moving path $X(t)$ would be
\begin{equation}\label{eq intro def of M motivation}
    \int_{\R^2}
    \bigl(y-X'(t)\bigr)H(x-X(t))\om(t,z)\,dz.
\end{equation}
However, differentiating this quantity produces the additional term
\[
    -X''(t)\int_{\R^2}H(x-X(t))\om(t,z)\,dz,
\]
which vanishes identically when $X(t)=X_0+ct$, or when $X(t)$ is the $H$--center of $\om(t)$, namely $
    \int_{\R^2}H(x-X(t))\om(t,z)\,dz=0$.

More generally, for any
fixed $a$, one may define the \emph{moving $H$--quantile} $X_a(t)$ by
\[
    \int_{\R^2}H(x-X_a(t))\om(t,z)\,dz=a,
\]
for which the corresponding moving functional 
$$
\mathcal M_a(t)=\int_{\mathbb R^2} yH(x-X_a(t))\om(t,z) \,dz,
$$
satisfies  the Lyapunov estimate
$$
\frac{d}{dt} \mathcal M_a(t)\gtrsim \mathcal D_a(t),\qquad
\mathcal D_a(t)=\int_{\R^2}(y-X_a'(t))^2H'(x-X_a(t))\om(t,z)\,dz.
$$

The continuum family $X_a(t),\mathcal M_a(t)$ allows us to detect concentration uniformly in the horizontal variable. Hamiltonian conservation gives logarithmic control of the vertical second moment, and hence a sublinear bound on the total dissipation after integrating over both time and the label $a$. 

On the other hand, if a fixed amount of vorticity is concentrated in any horizontal unit window, then a positive-measure family of quantiles $X_a(t)$ passes through that window, forcing a definite amount of dissipation. Thus such concentration can occur only on a set of times of density zero. This yields uniform horizontal evacuation and hence velocity damping along a density-one set of times.

\subsection{Hamiltonian conservation and non-damping}
The non-damping results arise from the opposing mechanism: conserved Hamiltonian quantities prevent the velocity from becoming small. Just as with the Lyapunov functionals, the exact nature of this mechanism is dictated by the geometry of the domain.

\subsubsection*{The infinite channel: velocity lower bounds and Sobolev examples}

In the channel, the Hamiltonian relates the kinetic energy to a weighted vorticity moment. When criterion \eqref{eq:thm:channel-hamiltonian} holds, this relationship forces a uniform-in-time lower bound for the velocity. This criterion then allows for the construction of small non-damping perturbations: by concentrating negative vorticity near the stagnation line $y=0$, the kinetic part of the Hamiltonian dominates the weighted vorticity moment, yielding data arbitrarily small in $W^{s,p}$ for every $s < 1+\frac{1}{p}$.

\subsubsection*{The whole plane: logarithmic divergence and vorticity trapping}

In the whole plane, the Hamiltonian balances a logarithmic interaction energy against the vertical second moment. For non-positive perturbations, if the velocity were to damp, the equivalence in Proposition~\ref{prop:damping-circulation-equivalence} would force the vorticity mass to vanish uniformly from every fixed-radius ball. This loss of spatial tightness drives the logarithmic interaction energy to infinity, contradicting Hamiltonian conservation.

\subsection{Organization}

The rest of the paper is organized as follows.

\begin{itemize}

\item In Section \ref{sec:preliminaries}, we collect the notation and some Biot--Savart estimates for the infinite channel and whole plane geometries.

\item Sections \ref{sec:channel_damping} and \ref{sec:nodamping_channel} concern the infinite channel. In Section \ref{sec:channel_damping}, we introduce the moving Lyapunov functionals and prove global inviscid damping for small nonnegative perturbations. In Section \ref{sec:nodamping_channel}, we use Hamiltonian conservation to derive quantitative velocity lower bounds and construct low-Sobolev non-damping examples.

\item Sections \ref{sec:damping_wholespace} and \ref{sec:plane-negative} concern the whole plane. In Section \ref{sec:damping_wholespace}, we prove density-one vorticity evacuation and velocity decay, and quantitative dispersion for small nonnegative vorticity. In Section \ref{sec:plane-negative}, we treat negative vorticity and establish velocity non-damping together with tightness modulo translations.

\item Appendix \ref{append:proof} proves the general equivalence between damping and local evacuation used throughout the paper, while Appendix \ref{append:linear rate} establishes the
 linear inviscid damping rates in both the infinite channel and whole plane.

\end{itemize}

\subsection*{Statement on AI usage}
The authors used ChatGPT 5.6 plus for mathematical discussion and language editing. All results and proofs were written and verified by the authors, who take full responsibility for the paper.

\subsection*{Acknowledgments}

This work was supported by the National Natural Science Foundation of China   under Grant No. 12421001 and No. 12288201.

\section{Preliminaries}
\label{sec:preliminaries}

\subsection{Notation}
\label{subsec:notation}

Throughout the paper, the spatial domain is either the infinite channel
$
    \Omega=\mathbb R\times[-1,1]
$
or the full plane
$
    \Omega=\mathbb R^2.
$
The relevant domain will always be clear from the section under consideration.

For a point $z\in\Om$, we denote its components as $z=(x,y)$. We also write
$ dz=dxdy. $ When no ambiguity can arise, we omit the domain in integrals, i.e. $
    \int \om(z)\,dz$.
 
We also use $\N$ to denote the set of positive integers.

The positive and negative parts of a function $f$ are denoted by
$$
    f_+:=\max\{f,0\},
    \qquad
    f_-:=\max\{-f,0\},
$$
so that
$$
    f=f_+-f_-,
    \qquad
    |f|=f_++f_-.
$$
For $1\le p\le\infty$, the Lebesgue norm $\|f\|_{L^p}$ is taken over the underlying
spatial domain unless otherwise specified.

We use the Japanese bracket notation
$
    \langle a\rangle:=(1+|a|^2)^{1/2}.
$
The notation
$
    A\lesssim B
$
means that there exists a constant $C>0$ such that
$
    A\le CB.
$
Similarly,
$
    A\gtrsim B
$
means $B\lesssim A$. The notation
$
    A\approx B
$
means both $A\lesssim B$ and $B\lesssim A$. If the implicit constant depends on
a parameter, we indicate this by a subscript, for instance
$
    A\lesssim_p B.
$

For any $R>0$ and $z_0\in \Om$, $B_R(z_0)$ denotes the closed ball of radius $R$
$$
    B_R(z_0):= \{z\in \Om:  |z-z_0| \le R \}.
$$

We recall the Gevrey norm $\mathcal G^\lambda_\sigma$ on $\R^2$: for any $ \sigma \ge 0$ and $\lambda >0$,
\begin{equation}\label{eq:R2-Gevrey def}
\begin{aligned}
\|f\|_{\mathcal G^\lambda_\sigma}^2
:=
\int_{\R^2}
e^{2\lambda\langle\xi\rangle^\sigma}
 |
\widehat{f}(\xi) 
 |^2\,d\xi ,
\end{aligned}
\end{equation}
where $\widehat{f}$ denotes the Fourier transform.
 
For any $s=m+\alpha$ with integer $m \ge 0$ and $0\le \alpha < 1$, we recall the H\"older norm
\begin{equation}\label{eq:def-classical-holder}
\begin{aligned}
    \|f\|_{C^{m+\alpha}(\R)}
    :=
    \sum_{j=0}^{m}
    \|\partial_x^j f\|_{L^\infty(\R)}
    +
    \sup_{x\neq x'}
    \frac{|\p_x^mf(x)-\p_x^mf(x')|}{|x-x'|^\alpha}.
\end{aligned}
\end{equation}
Next, we define the anisotropic H\"older norms on
$\R^2$. For any $s>0$ and $f:\R^2\to\R$, we define
\begin{equation}\label{eq:def-anisotropic-holder}
\begin{aligned}
    \|f\|_{C_x^s}
    :=
    \sup_{y\in\mathbb R}
    \|f(\cdot,y)\|_{C^s(\mathbb R)},\quad \text{and}\quad
    \|f\|_{C_y^s}
    :=
    \sup_{x\in\mathbb R}
    \|f(x,\cdot)\|_{C^s(\mathbb R)}.
\end{aligned}
\end{equation}

\subsection{Biot--Savart kernel on \texorpdfstring{$\R \times [-1,1]$}{R x [-1,1]}} 
We first fix the sign conventions used throughout the paper. For a scalar function
$f$, we write
$
\nabla^\perp f:=(-\p_y f,\p_x f).
$
Thus, if $
u=\nabla^\perp\psi$ and $\Delta\psi=\om$,
then $
\nabla\cdot u=0$ and $\p_x u^y-\p_y u^x=\om.$

Let $\Omega:=\mathbb R\times[-1,1]$, with elliptic equations understood in the interior
$\mathbb R\times(-1,1)$. 

Let $G(z,z')$ denote the Dirichlet Green function for the Laplacian $\Delta$:
$$
\Delta_zG(z,z')=\delta_{z'}
\quad\text{in }\Om,
\qquad
G(z,z')=0
\quad\text{for }y=\pm1.
$$

Using Dirichlet eigenfunctions on $[-1,1]$  and separation of variables, we have
\[
G(z,z')
=-\frac1\pi\sum_{n=1}^{\infty}\frac1n
e^{-n \frac\pi2 |x-x'|}
\sin\left(\frac{n\pi}{2}(y+1)\right)
\sin\left(\frac{n\pi}{2}(y'+1)\right).
\]
Note here the exponential screening effect in the infinite channel. 

Summing the series yields the Green function on $\Omega$:
\[
G(z,z')
=\frac{1}{4\pi}\log\frac{\cosh(a\xi)-\cos\bigl(a(y-y')\bigr)}{\cosh(a\xi)+\cos\bigl(a(y+y')\bigr)
 },
\]
where $\xi =x - x'$ and $a =\frac\pi2$ for short.

The formula also shows directly that
\[
G(z,z')=G(z',z),
\qquad
G((x,\pm1),z')=0.
\]

For any vorticity $\om\in L^1(\Om)\cap L^\infty(\Om)$, we define
\[
\psi(z):=\int_\Omega G (z,z')\omega(z')\,dz',
\qquad
u(z)=\nabla^\perp\psi(z).
\]

The corresponding Biot--Savart representation is
 \begin{equation}\label{eq Biot Savart}
u(z)=\int_\Om K (z,z')\om(z')\,dz',
\qquad
K(z,z'):=\nabla_z^\perp G(z,z'),
 \end{equation}
 where the Biot--Savart kernel $K(z,z') = (K^x ,K^y )$ is given by
\begin{equation}\label{eq:channel Kx}
    K^x =
    \frac18
    \left[
        \frac{
            -\sin\left(a(y+y')\right)
        }{
            \cosh\left(a\xi\right)
            +
            \cos\left(a(y+y')\right)
        }
        -
        \frac{
            \sin\left(a(y-y')\right)
        }{
            \cosh\left(a\xi\right)
            -
            \cos\left(a(y-y')\right)
        }
    \right],
\end{equation}
and
\begin{equation}\label{eq:channel Ky}
    K^y =
    \frac14
    \frac{
         \sinh\left(a\xi\right)
        \cos\left(ay\right)
        \cos\left(ay'\right)
    }{
        \left[
            \cosh\left(a\xi\right)
            -
            \cos\left(a(y-y')\right)
        \right]
        \left[
            \cosh\left(a\xi\right)
            +
            \cos\left(a(y+y')\right)
        \right]
    }.
\end{equation}
The following estimates collect the properties of the channel kernel needed
throughout the paper.
\begin{lemma}\label{lem:kernel_change x x'}
    For any $0\le \sigma <\frac{\pi}{2}$, the kernel $K=(K^x,K^y)$ in \eqref{eq:channel Kx} and \eqref{eq:channel Ky} satisfies
    \begin{equation}\label{eq K1}
     \sup_{x,y,y'}
    \int_{\mathbb R}e^{\sigma|x-x'|}|K^x(x,y,x',y')|\,dx'
    \lesssim_{\sigma} 1
\end{equation}
and
    \begin{equation}\label{eq:anti Kx}
(x-x')K^y(z,z') \ge 0.
\end{equation}
Moreover,
\begin{equation}\label{eq estimate on K}
|K(z,z')|
\lesssim
\begin{cases}
\dfrac{1}{|z-z'|}, & |z-z'|\le 1, \\[1ex]
e^{-\frac{\pi}{2}|x-x'|}, & |z-z'|\ge 1.
\end{cases}
\end{equation}
\end{lemma}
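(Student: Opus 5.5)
The plan is to verify the three claims directly from the explicit formulas \eqref{eq:channel Kx}--\eqref{eq:channel Ky}, with $a=\pi/2$, $\xi=x-x'$, and $y,y'\in[-1,1]$. Throughout, $a(y\pm y')\in[-\pi,\pi]$.

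\textbf{The sign condition \eqref{eq:anti Kx}.} This is the easiest part. In \eqref{eq:channel Ky}, $\cos(ay)\ge0$ and $\cos(ay')\ge0$ because $|ay|,|ay'|\le\pi/2$. Both denominator factors are nonnegative, since $\cosh(a\xi)\ge1\ge|\cos(\cdot)|$. Hence $K^y$ has the sign of $\sinh(a\xi)$, which is the sign of $x-x'$, and therefore $(x-x')K^y\ge0$. Where a denominator factor vanishes (only at $\xi=0$ with $y=y'$ or $y+y'=\pm1$-type boundary coincidences), the numerator also vanishes or the point is excluded. We interpret these points as a null set or as the limit.

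\textbf{The far-field bound.} Let $D_-=\cosh(a\xi)-\cos(a(y-y'))$ and $D_+=\cosh(a\xi)+\cos(a(y+y'))$. We need lower bounds on both.
\begin{itemize}
\item For $|\xi|\ge1/2$: $D_\pm\ge\cosh(a\xi)-1\gtrsim e^{a|\xi|}$. The numerators are $O(1)$ for $K^x$ and $O(e^{a|\xi|})$ for $K^y$. This gives $|K^x|\lesssim e^{-a|\xi|}$ and $|K^y|\lesssim e^{a|\xi|}/e^{2a|\xi|}=e^{-a|\xi|}$.
\item For $|z-z'|\ge1$ with $|\xi|\le1/2$: we have $|y-y'|\ge\sqrt3/2$. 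Then $D_-\ge1-\cos(a\sqrt3/2)>0$. For $D_+$, note $|y+y'|\le 2-|y-y'|\le 2-\sqrt3/2$, so $a|y+y'|<\pi$ strictly and $D_+\gtrsim1$. Hence $|K|\lesssim1\lesssim e^{-a|\xi|}$ in this range.
\end{itemize}

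\textbf{The near-field bound, $|z-z'|\le1$.} Expand $D_-\approx \frac{a^2}{2}(\xi^2+(y-y')^2)\approx|z-z'|^2$, using $\cosh s-1\ge s^2/2$ and $1-\cos\theta\ge c\theta^2$ for $|\theta|\le\pi$. With numerators $|\sin(a(y-y'))|\lesssim|z-z'|$ and $|\sinh(a\xi)|\lesssim|\xi|$, the $D_-$ term contributes $\lesssim|z-z'|^{-1}$.

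The $D_+$ term is the image-charge part. It can be small only when $a(y+y')$ is near $\pm\pi$, i.e.\ $z'$ is near the reflection $\bar z=(x,\pm2-y)$ of $z$. Here $D_+\approx \xi^2+(y+y'\mp2)^2=|z-\bar z'|^2$, where $\bar z'$ is the reflection of $z'$ across the same wall. The required geometric fact is $|z-\bar z'|\ge|z-z'|$ for $z,z'$ in the channel, because both points lie on the same side of the wall.

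For $K^x$, the numerator satisfies $|\sin(a(y+y'))|\lesssim|y+y'\mp2|$, giving a bound $\lesssim|z-\bar z'|^{-1}\le|z-z'|^{-1}$. For $K^y$, the factors $\cos(ay)\cos(ay')\lesssim(1\mp y)(1\mp y')$ absorb the extra singularity: since $|y+y'\mp2|=(1\mp y)+(1\mp y')$, we get
\[
|K^y|\lesssim \frac{|\xi|\,(1\mp y)(1\mp y')}{|z-z'|^2\,|z-\bar z'|^2}\lesssim\frac1{|z-z'|}.
\]
This bookkeeping near the walls is the main obstacle. I expect it to follow from $|\xi|\le|z-\bar z'|$ and $(1\mp y)(1\mp y')\le|z-\bar z'|^2$, together with $|z-z'|\le|z-\bar z'|$.

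\textbf{The integral bound \eqref{eq K1}.} For $|\xi|\ge1$, use $|K^x|\lesssim e^{-a|\xi|}$. Then $e^{\sigma|\xi|}e^{-a|\xi|}$ is integrable precisely because $\sigma<a=\pi/2$, with bound $\lesssim_\sigma1$.

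For $|\xi|\le1$ we cannot simply use $|z-z'|^{-1}$, which gives $\int_{|\xi|\le1} d\xi/\sqrt{\xi^2+h^2}\sim\log(1/h)$ with $h=|y-y'|$, unbounded as $h\to0$. Instead we exploit the odd numerator. The $D_-$ term is
\[
\frac{\sin(ah)}{\cosh(a\xi)-\cos(ah)}\approx\frac{h}{\xi^2+h^2},
\]
whose $\xi$-integral over $\mathbb R$ is at most $\pi$, uniformly in $h$. This is the Poisson kernel. Similarly, the $D_+$ term is $\approx k/(\xi^2+k^2)$ with $k=|y+y'\mp2|$, again Poisson-type with a uniformly bounded integral. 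So \eqref{eq K1} holds uniformly in $y,y'$.
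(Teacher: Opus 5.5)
Your proposal is correct, and for \eqref{eq K1} it is the paper's argument: split at $|\xi|=1$, bound the near part by a Poisson kernel via $|\sin\theta|\lesssim d(\theta)$ and $1-\cos\theta\gtrsim d(\theta)^2$, and use $\sigma<\pi/2$ for the tail. The only difference there is that the paper maps the $D_+$ term to the $D_-$ form by $\theta\mapsto\theta-\pi$, where you introduce $k=2-|y+y'|$.

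The paper omits \eqref{eq:anti Kx} and \eqref{eq estimate on K} as standard, so your sign check and image-charge argument are a genuine addition, but the $K^y$ wall estimate needs one correction. It closes with $|\xi|\le|z-z'|$ together with the identity $|z-\bar z'|^2-|z-z'|^2=4(1\mp y)(1\mp y')$, which gives $|K^y|\lesssim |\xi|/|z-z'|^2\le|z-z'|^{-1}$. The inequality $|\xi|\le|z-\bar z'|$ that you listed only yields $|z-\bar z'|/|z-z'|^2$, which is too weak. A minor point: $D_+$ vanishes only when $\xi=0$ and $y=y'=\pm1$, not when $y+y'=\pm1$.
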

\begin{proof}
    The estimates \eqref{eq:anti Kx} and \eqref{eq estimate on K} are standard so we omit them. For \eqref{eq K1}, from the explicit formula above, we see that
$$
\begin{aligned}
    \int_{\R} e^{\sigma|x-x'|}|K^x(x,y;x',y')|dx'
    &\lesssim
    \int_{\R} \frac{
        e^{\sigma|\xi|}\left|\sin\left(\frac{\pi}{2}(y+y')\right)\right|
    }{
        \cosh\left(\frac{\pi}{2}\xi\right)
        +
        \cos\left(\frac{\pi}{2}(y+y')\right)
    }\,d\xi \\
    &\quad+
    \int_{\R}\frac{
        e^{\sigma|\xi|}\left|\sin\left(\frac{\pi}{2}(y-y')\right)\right|
    }{
        \cosh\left(\frac{\pi}{2}\xi\right)
        -
        \cos\left(\frac{\pi}{2}(y-y')\right)
    }\,d\xi.
\end{aligned}
$$
Since $|\sin \theta|=|\sin(\theta-\pi)|$ and $\cos \theta=-\cos(\theta-\pi)$, it suffices to prove that
\begin{equation}\label{eq bound for Kx}
 \sup_{\theta\in [0, 2\pi]}
    \int_{\mathbb R}
    \frac{e^{\sigma|\xi|} |\sin\theta|}
    {\cosh\left(\frac{\pi}{2}\xi\right)-\cos\theta}
    \,d\xi
    \lesssim_{\sigma} 1.
\end{equation}
 
To show \eqref{eq bound for Kx}, for any $\theta \in [0,2\pi]$, we set $d(\theta)=\min\{ \theta, 2\pi-\theta \}$ and consider two regions $|\xi|\le 1 $ and $|\xi|\ge 1 $.

When $|\xi|\le 1 $, we have $\cosh (\frac{\pi}{2}\xi ) \geq 1 + |\xi|^2/4   $. Using also $|\sin \theta| \lesssim d(\theta)$ and $ 1-\cos\theta \gtrsim d^2(\theta)$, 
$$
    \int_{|\xi|\le1}
    e^{\sigma|\xi|}
    \frac{|\sin\theta|}
    {\cosh\left(\frac{\pi}{2}\xi\right)-\cos\theta}
    \,d\xi \lesssim \int_{|\xi| \le 1} \frac{|d(\theta)|}{\xi^2+d^2(\theta)}\,d\xi \lesssim 1.
$$

When $|\xi|\ge 1$, we have
$
    \cosh\left(\frac{\pi}{2}\xi\right)-\cos\theta
    \gtrsim
    e^{\frac{\pi}{2}|\xi|}.
$
Since $\sigma<\frac{\pi}{2}$, the integral in this region is also integrable:
$$
\int_{|\xi|\ge 1} e^{\sigma|\xi|}
    \frac{|\sin\theta|}
    {\cosh\left(\frac{\pi}{2}\xi\right)-\cos\theta}
    \lesssim \int_{|\xi|\ge 1}
    e^{-\left(\frac{\pi}{2}-\sigma\right)|\xi|} \lesssim_{\sigma} 1 .
$$

\end{proof}
Applying \eqref{eq estimate on K}, it follows directly that
\begin{equation}\label{eq:velocity-bound}
   \|u\|_{L^1} \lesssim\|\om\|_{L^1} \quad \text{and} \quad \|u\|_{L^\infty}
    \lesssim \|\om\|_{L^{\infty}}.
\end{equation}
Moreover, the usual log-Lipschitz coefficient does not depend on $L^1$ of $\omega$ due to the exponential decay of the Biot--Savart kernel:
$$
    |u(z_1 )-u( z_2 )|
    \le
    C \|\om\|_{L^\infty(\Omega)}
    |z_1- z_2|
    \left(
        1+\left|\log |z_1-z_2|\right|
    \right) \quad \text{for any $|z_1 -z_2| \le 1$}.
$$

\subsection{Biot--Savart kernel on \texorpdfstring{$\R^2$}{R2}}

Next, we recall the fundamental solution of the Laplacian on $\R^2$, 
$$
G(z,z')
:=\frac{1}{2\pi}\log|z-z'|,
\qquad
\Delta_z G(z,z')=\delta_{z'}.
$$

The corresponding velocity field is given by the planar Biot--Savart law
\begin{equation}\label{eq:R2-Biot-Savart}
    u(z)
    =
    \int_{\mathbb R^2}K(z,z')\om(z')\,dz',
\end{equation}
where
$$
    K^x(z,z')
    =
    -\frac1{2\pi}
    \frac{y-y'}{|z-z'|^2},
    \qquad
    K^y(z,z')
    =
    \frac1{2\pi}
    \frac{x-x'}{|z-z'|^2}.
$$

For later use, we record some basic velocity estimates.  By splitting the
Biot--Savart integral, we obtain that for every $2<p\leq\infty$,
\begin{equation}\label{eq:R2-velocity-bound}
 \|u\|_{L^p(\mathbb R^2)}
\lesssim_p
\|\omega\|_{L^1(\R^2)}^{\frac12+\frac1p}
\|\omega\|_{L^\infty(\R^2)}^{\frac12-\frac1p},
\end{equation}
and the usual  log-Lipschitz estimates: for any $|z_1 -z_2| \le 1$, 
$$ 
    |u(z_1 )-u( z_2 )|
    \le
    C  \|\om\|_{L^1 \cap L^\infty}
    |z_1- z_2|
    \left( 1+\left|\log |z_1-z_2|\right|
    \right). 
 $$

We also need the following lemma, concerning the divergence of the logarithmic interaction energy on $\R^2$.

\begin{lemma}
\label{lem:R2-log-energy-divergence}
Let $\om\in L^1(\R^2)\cap L^\infty(\R^2)$  with a fixed sign satisfying
$$
\int_{\R^2}|z|^2 |\om(z)| \,dz<\infty.
$$
 
Then for every $R\ge 2$,  
\begin{equation}\label{eq:R2-positive-logarithmic-lower-bound}
\begin{aligned}
&\quad \iint_{\R^2\times\R^2}
\log|z-z'|
\om( z)\om( z')\,dzdz'\\
& \ge
\log R \, \|\om\|_{L^1}
\left(
\|\om\|_{L^1}
-
\sup_{ z_0\in\R^2}
\int_{B_R( z_0)}|\om(z)|\,dz
\right) - \frac{\pi}{2}\|\om \|_{L^1 }
\|\om \|_{L^\infty } .
\end{aligned}
\end{equation}
\end{lemma}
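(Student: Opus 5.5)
Since $\omega$ has a fixed sign, the product $\omega(z)\omega(z')$ is nonnegative. Moreover, $|\omega|$ satisfies the same hypotheses and $\omega(z)\omega(z')=|\omega(z)||\omega(z')|$. We may therefore assume without loss of generality that $\omega\ge0$.

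The first step is to check that the left-hand side is well defined. For large separations we use
\[
\log|z-z'|\le\log(1+|z|)+\log(1+|z'|)\lesssim 1+|z|^2+|z'|^2,
\]
so the moment assumption $\int|z|^2|\omega|<\infty$ together with $\omega\in L^1$ makes the positive part of the integrand integrable. Near the diagonal, the singularity $\log|z-z'|$ is locally integrable, and $\omega\in L^\infty$ controls it (this is made quantitative below). Hence the double integral converges absolutely.

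The main step is to split the double integral according to the distance $|z-z'|$ into three regions.
\begin{itemize}
\item[(i)] On $\{|z-z'|\ge R\}$ we have $\log|z-z'|\ge\log R>0$, because $R\ge2$.
\item[(ii)] On $\{1\le|z-z'|<R\}$ the integrand is nonnegative, so we simply drop this region.
\item[(iii)] On $\{|z-z'|<1\}$ the logarithm is negative, and this region gives the error term.
\end{itemize}

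For region (i), fix $z$. Then
\[
\int_{|z'-z|\ge R}\omega(z')\,dz' = \|\omega\|_{L^1}-\int_{B_R(z)}\omega \;\ge\; \|\omega\|_{L^1}-\sup_{z_0}\int_{B_R(z_0)}|\omega|.
\]
Multiplying by $\log R\,\omega(z)$ and integrating in $z$ produces exactly the main term
\[
\log R\,\|\omega\|_{L^1}\Bigl(\|\omega\|_{L^1}-\sup_{z_0}\int_{B_R(z_0)}|\omega|\Bigr).
\]

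For region (iii), fix $z$ again and use the $L^\infty$ bound on $\omega(z')$:
\[
\int_{|z'-z|<1}|\log|z-z'||\,\omega(z')\,dz'\le\|\omega\|_{L^\infty}\,2\pi\int_0^1 r|\log r|\,dr=\frac{\pi}{2}\|\omega\|_{L^\infty},
\]
since $\int_0^1 r|\log r|\,dr=\tfrac14$. Integrating against $\omega(z)\,dz$ shows that the contribution of this region is at least $-\frac{\pi}{2}\|\omega\|_{L^1}\|\omega\|_{L^\infty}$.

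Summing the three contributions gives \eqref{eq:R2-positive-logarithmic-lower-bound}. No step is a genuine obstacle. The only point needing care is justifying the splitting, that is, the absolute integrability checked in the first step, and this is where the second-moment hypothesis is used.
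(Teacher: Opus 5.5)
Your proof is correct and follows the paper's argument essentially verbatim. You reduce to $\omega\ge0$, bound the region $\{|z-z'|\ge R\}$ below by $\log R\,\|\omega\|_{L^1}\bigl(\|\omega\|_{L^1}-\sup_{z_0}\int_{B_R(z_0)}|\omega|\bigr)$, discard $\{1\le|z-z'|<R\}$ by positivity, and control $\{|z-z'|<1\}$ by $\|\omega\|_{L^1}\|\omega\|_{L^\infty}\int_{|w|<1}|\log|w||\,dw=\frac{\pi}{2}\|\omega\|_{L^1}\|\omega\|_{L^\infty}$. Your preliminary absolute-integrability check is a harmless addition that the paper leaves implicit; it is not strictly needed for the inequality, since the negative part is already integrable using only $L^1\cap L^\infty$, so the double integral is well defined in $(-\infty,+\infty]$.
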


\begin{proof}
By replacing $\om$ with $-\om$ if necessary, we may assume that $\om \ge0$.

Since $\log|z-z'| \ge 0$ when $|z-z'| \ge 1 $, for every $R \ge 2$, we have
$$
\begin{aligned}
\quad &\iint_{|z-z'|\ge1}
\log|z-z'|\om(z)\om(z')\,dzdz' \ge \log R \iint_{|z-z'|\ge R}\om(z)\om(z')\,dzdz'
\\
&\ge  \log R \left( \|\om\|_{L^1(\R^2)}^2
-
\iint_{|z-z'|<R}\om(z)\om(z')\,dzdz' \right)\\
&\ge
\log R \|\om\|_{L^1(\R^2)}
\left(
\|\om\|_{L^1(\R^2)}
-
\sup_{ z_0\in\R^2}
\int_{B_R( z_0)}|\om(z)|\,dz
\right).
\end{aligned}
$$
Moreover, in the regime $0< |z-z'|\le 1 $, $\log |z-z'| \le 0$, and by $\om \ge 0$ we have
\begin{equation}
\begin{aligned}
& \iint_{|z-z'|<1}
\log|z-z'|\om(z)\om(z')\,dzdz'\\
&\ge
- \|\om\|_{L^1(\R^2)}
  \|\om\|_{L^\infty(\R^2)}
\int_{|z|<1}|\log|z||\,dz = -\frac{\pi}{2} \|\om\|_{L^1(\R^2)}
  \|\om\|_{L^\infty(\R^2)}.
\end{aligned}
\end{equation}
Combining the preceding two estimates, we obtain
\eqref{eq:R2-positive-logarithmic-lower-bound}.
\end{proof}

\subsection{Inviscid damping and uniform evacuation}

For vorticity with a definite sign, we have the following criterion for damping.

\begin{proposition}[Equivalence of damping and uniform evacuation]
\label{prop:damping-circulation-equivalence}
Let $\Omega$ be either $\mathbb R\times[-1,1]$ or $\mathbb R^2$. Let
$\{\om_n\}_{n\ge1}$ be a sequence of fixed sign vorticities satisfying
$$
    \sup_{n\ge1}
    \left(
        \|\om_n\|_{L^1(\Omega)}
        +
        \|\om_n\|_{L^\infty(\Omega)}
    \right)
    <\infty.
$$
Let $u_n:\Omega\to\mathbb R^2$ be the velocity fields of $\om_n$. Then the
following statements are equivalent:
\begin{enumerate}
    \item There exists an admissible $p$ such that
    $
        \lim_{n\to\infty}\|u_n\|_{L^p(\Omega)}=0.
    $
    \item For every admissible $p$,
    $
        \lim_{n\to\infty}\|u_n\|_{L^p(\Omega)}=0.
    $
    \item 
    $
        \lim_{n\to\infty}
        \sup_{ z_0\in\Omega}
        \int_{|z-z_0|\le 1}|\om_n(z)|\,dz
        =0.
    $
\end{enumerate}
Here the admissible range is $1< p\le\infty$ when
$\Omega=\mathbb R\times[-1,1]$ and $2<p\le\infty$ when
$\Omega=\mathbb R^2$.
\end{proposition}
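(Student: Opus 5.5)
I will prove (3) $\Rightarrow$ (2) and (1) $\Rightarrow$ (3); the implication (2) $\Rightarrow$ (1) is trivial. Throughout, set $M:=\sup_n(\|\om_n\|_{L^1}+\|\om_n\|_{L^\infty})$ and $m_n:=\sup_{z_0}\int_{B_1(z_0)}|\om_n|$.

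\emph{(3) $\Rightarrow$ (2).} The plan is to split the Biot--Savart kernel at a scale $r\in(0,1]$ and control the near and far parts separately.

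1. \emph{Near part:} since $|K|\lesssim|z-z'|^{-1}$ and $|\om_n|\le M$, the contribution from $|z-z'|\le r$ is bounded pointwise by $\lesssim Mr$.

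2. \emph{Intermediate annulus $r\le|z-z'|\le R$:} cover it by $O(R^2)$ unit balls, each carrying vorticity mass at most $m_n$. This contribution is $\lesssim r^{-1}R^2 m_n$.

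3. \emph{Far part $|z-z'|\ge R$:} in the channel, the kernel bound gives $e^{-\frac\pi2 R}M$. In $\R^2$, the pointwise bound is $R^{-1}M$.

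Choosing $r,R$ depending on $m_n$ gives $\|u_n\|_{L^\infty}\to0$ in both geometries. For finite $p$, interpolate with a uniform bound at a lower exponent. In the channel use $\|u_n\|_{L^1}\lesssim M$ from \eqref{eq:velocity-bound}, so $\|u_n\|_{L^p}\le\|u_n\|_{L^1}^{1/p}\|u_n\|_{L^\infty}^{1-1/p}\to0$ for every $1<p\le\infty$. In $\R^2$, \eqref{eq:R2-velocity-bound} gives uniform bounds for every $q>2$, and interpolating between some $q\in(2,p)$ and $L^\infty$ yields decay for every $2<p\le\infty$. These exponent ranges are exactly why $p=1$ in the channel and $p\le2$ in the plane are excluded.

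\emph{(1) $\Rightarrow$ (3).} This is the main step, and it is where the fixed sign is used. I would argue by contradiction: suppose there are $\delta>0$ and points $z_n$ with $\int_{B_1(z_n)}\om_n\ge\delta$, taking $\om_n\ge0$ without loss of generality. The goal is a lower bound $\|u_n\|_{L^p}\gtrsim_\delta 1$ that is uniform in $n$.

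1. \emph{Choice of test quantity:} test the circulation identity. For a cutoff $\chi$ equal to $1$ on $B_1(z_n)$ and supported in $B_2(z_n)$ (intersected with $\Om$), we have $\int\chi\om_n=-\int\nabla^\perp\chi\cdot u_n$. This is valid in the channel as well, because $u_n\cdot n=0$ on the boundary and $\chi$ can be chosen independent of $y$ near the walls, e.g.\ $\chi=\chi(x-x_n)$ there.

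2. \emph{Applying the sign:} positivity gives $\int\chi\om_n\ge\delta$, since the rest of $\chi\om_n$ is also nonnegative and cannot cancel the mass on $B_1(z_n)$.

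3. \emph{Conclusion:} Hölder gives $\delta\le\|\nabla\chi\|_{L^{p'}}\|u_n\|_{L^p}\lesssim\|u_n\|_{L^p}$, contradicting (1). Notably, this direction works for every $p\ge1$.

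The obstacle I anticipate is the channel boundary in Step 1. One must make sure the cutoff is compatible with the walls, which is handled by using a function of $x$ alone, so that the boundary term $\int_{y=\pm1}\chi\,u_n\cdot n$ vanishes.
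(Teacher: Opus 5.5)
\textbf{Gap: the wall term in the channel case of (1)$\Rightarrow$(3).} Your kernel splitting and interpolation for (3)$\Rightarrow$(2) are correct, and so is (1)$\Rightarrow$(3) in $\R^2$. In the channel, however, the boundary term in the circulation identity involves the \emph{tangential} velocity, not $u_n\cdot n$. For $\chi$ with compact $x$-support, integrating $\om_n=\p_xu_n^y-\p_yu_n^x$ by parts gives
\[
\int_\Om\chi\,\om_n\,dz=-\int_\Om\nabla^\perp\chi\cdot u_n\,dz+\int_{\R}\chi(x,-1)\,u_n^x(x,-1)\,dx-\int_{\R}\chi(x,1)\,u_n^x(x,1)\,dx .
\]
No-penetration kills $u_n^y$ on $y=\pm1$, not $u_n^x$. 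Choosing $\chi=\chi(x-x_n)$ near the walls keeps $\chi\neq0$ there, so these terms remain.

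These terms also have the wrong sign for you. For $\om_n\ge0$ the stream function satisfies $\psi_n\le0$ and $\psi_n=0$ on the walls. Hence $u_n^x=-\p_y\psi_n\le0$ on $y=1$ and $u_n^x\ge0$ on $y=-1$. For $\chi\ge0$ the wall terms are therefore nonnegative, and nothing stops them from absorbing most of $\int\chi\om_n$. So $\int\chi\om_n\ge\delta$ no longer forces $|\int\nabla^\perp\chi\cdot u_n|\gtrsim\delta$.

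This is not an edge case. The channel has width $2$, so every window $B_1(z_0)\cap\Om$ reaches a wall. Moreover, for finite $p$ the wall trace of $u_n^x$ is not controlled by $\|u_n\|_{L^p(\Om)}$. The H\"older step therefore does not close as written.

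\textbf{Repair.} The missing ingredient is the uniform $L^\infty$ bound on the vorticity, used to cut off away from the walls. Set $M:=\sup_n(\|\om_n\|_{L^1}+\|\om_n\|_{L^\infty})$ and $\eta:=\min\{\tfrac12,\tfrac{\delta}{8M}\}$. Take $\chi(x,y)=\chi_1(x-x_n)\theta(y)$, where $0\le\chi_1\le1$ and $\chi_1=1$ on $[-1,1]$, and where $\theta\in C_c^\infty((-1,1))$ satisfies $0\le\theta\le1$, $\theta=1$ on $[-1+\eta,1-\eta]$, and $|\theta'|\lesssim\eta^{-1}$. Then $\chi$ is compactly supported in the open channel, so $\int\chi\om_n=-\int\nabla^\perp\chi\cdot u_n$ holds with no wall terms. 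Positivity gives
\[
\int_\Om\chi\,\om_n\,dz\ \ge\ \int_{B_1(z_n)}\om_n\,dz-\int_{\{|x-x_n|\le1,\ 1-\eta<|y|\le1\}}\om_n\,dz\ \ge\ \delta-4\eta M\ \ge\ \frac{\delta}{2},
\]
while $\|\nabla\chi\|_{L^{p'}}\lesssim_{\delta,M}1$ uniformly in $n$. This plays the same role as the factor $1-y^2$ in the proof of Theorem~\ref{thm:couette-lower-bound}.

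\textbf{Comparison with the paper.} With this fix your cycle is correct and takes a different route from the paper's. The paper proves (1)$\Rightarrow$(2) by upgrading decay in one $L^{\bar p}$ to $L^\infty$ through the uniform log-Lipschitz modulus of $u_n$. It then gets (2)$\Rightarrow$(3) by Stokes on $\p B_1(z_0)$, where the wall pieces of the boundary are harmless because they are bounded by $\|u_n\|_{L^\infty}$ times the perimeter. Your duality test avoids that continuity upgrade and gives a lower bound directly in every $L^p$, $1\le p\le\infty$. The price is that the cutoff must vanish on $\p\Om$, which is exactly where the $L^\infty$ bound on $\om_n$ is needed.
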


The proof of Proposition \ref{prop:damping-circulation-equivalence} is based on elementary facts about the Biot--Savart law and is given in Appendix \ref{append:proof}.

\section{Inviscid damping for positive vorticity in the infinite channel}\label{sec:channel_damping}

We now turn to the positive-vorticity regime in the infinite channel and prove Theorem \ref{thm:intro-channel-positive-damping}. In view of Proposition \ref{prop:damping-circulation-equivalence}, it is enough to establish uniform local evacuation of the vorticity. 

The mechanism that rules out such concentration is a moving Lyapunov functional adapted to linear trajectories which will be our observation windows. We first establish its monotonicity, then use it in a contradiction argument to deduce uniform evacuation and, consequently, global decay of the perturbation velocity.

Throughout this section, we fix the spatial domain $\Omega =\R \times [-1,1]$.

\subsection{Lyapunov functionals}
We begin by constructing the moving Lyapunov functional that detects vorticity concentration along linear trajectories. 

Throughout this section, we define a horizontal weight $H(\xi)$ on $\R $:
\begin{equation}\label{eq:def_channel_H}
 H(\xi)=\int_0^{\xi}e^{-|s|}\,ds.
\end{equation}

\begin{lemma}\label{lem:property of H}
The weight $H$ satisfies
\begin{equation}\label{eq anti Kx}
 \|H\|_{L^\infty} \lesssim 1 \quad \text{and} \quad
\quad H'(\xi)
    >0. 
\end{equation}
Moreover, for any $X,x'\in\mathbb R$ and any $y,y'\in[-1,1]$,  
\begin{equation}\label{eq change x of Kx}
    \int_{\mathbb R}
    H'(x-X)|K^x(x,y;x',y')|\,dx
    \le  CH'(x'-X),
\end{equation}
where the constant
$C$ is independent of $X,x',y,y'$.
\end{lemma}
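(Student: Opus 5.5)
The first two properties are immediate from the definition \eqref{eq:def_channel_H}. Since $H'(\xi)=e^{-|\xi|}>0$ for every $\xi$, $H$ is strictly increasing. Moreover $|H(\xi)|\le\int_0^{|\xi|}e^{-s}\,ds=1-e^{-|\xi|}<1$, so $\|H\|_{L^\infty}\le 1$.

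For \eqref{eq change x of Kx}, I will transfer the weight from $x$ to $x'$ using the elementary inequality $|x-X|\ge |x'-X|-|x-x'|$. This gives
\[
H'(x-X)=e^{-|x-X|}\le e^{-|x'-X|}e^{|x-x'|}=H'(x'-X)\,e^{|x-x'|}.
\]
Integrating in $x$ then yields
\[
\int_{\R}H'(x-X)|K^x(x,y;x',y')|\,dx\le H'(x'-X)\int_{\R}e^{|x-x'|}|K^x(x,y;x',y')|\,dx .
\]

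It remains to bound the last integral uniformly in $x',y,y'$, with $x$ now the variable of integration. By \eqref{eq:channel Kx}, $K^x$ depends on $x,x'$ only through $\xi=x-x'$, and only through $\cosh(a\xi)$, which is even in $\xi$. So the substitution $\xi=x-x'$ turns the integral into exactly the quantity controlled in \eqref{eq K1}, equivalently \eqref{eq bound for Kx}, with $\sigma=1$. Since $1<\pi/2$, Lemma~\ref{lem:kernel_change x x'} applies and gives a constant $C$ independent of $X,x',y,y'$.

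The only point needing care is this choice of the decay rate $1$ in $H$. It must be strictly less than the screening rate $\pi/2$ of the channel kernel, so that the exponential factor $e^{|x-x'|}$ is absorbed by the decay of $K^x$. There is no genuine obstacle.
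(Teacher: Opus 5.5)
Your proposal is correct and follows the paper's proof: the paper also transfers the weight via $H'(x-X)/H'(x'-X)=e^{|x'-X|-|x-X|}\le e^{|x-x'|}$ and then applies \eqref{eq K1} with $\sigma=1<\pi/2$. Your remark that the integrand depends only on $\xi=x-x'$, $y$, $y'$ correctly justifies integrating in $x$ rather than $x'$; the evenness of $\cosh$ is not even needed for that substitution.
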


\begin{proof}
Estimate \eqref{eq anti Kx} is a direct consequence of \eqref{eq:def_channel_H}. The estimate \eqref{eq change x of Kx} follows directly from Lemma \ref{lem:kernel_change x x'}  (since $1<\frac{\pi}{2}$) and the fact that
$$
\frac{H'(x-X)}{H'(x'-X)}=e^{|x'-X|-|x-X|}\le e^{|x-x'|}.
$$
\end{proof}
\begin{remark} 
The particular choice of $H$ in \eqref{eq:def_channel_H} is inspired by \cite{2603.20065}  but not essential for the
moving Lyapunov argument. The argument below holds for any
weight $H \in C^1(\R)$ satisfying \eqref{eq anti Kx} and \eqref{eq change x of Kx}. 
\end{remark}
Let $I=[t_1,t_2] \subset [0,+\infty)$ be a non-empty bounded interval. Consider a linear trajectory $X: I \to \R  $ 
$$
    X(t)=x_*+c(t-t_1),
    \qquad
    t\in I,
$$
where $x_*, c \in \R$ are constants. Given a linear trajectory $X: I \to \R  $ and a Yudovich solution $\om(t)$ of \eqref{eq:euler}, we define the functionals
\begin{equation}\label{eq def of M_I}
\mathcal M_{X}(t)
    :=
    \int_\Omega
    (y-c)H(x-X(t))\omega(t,z)\,dz
\end{equation}
and
\begin{equation}\label{eq def of D_I}
 \mathcal   D_{X}(t)
    :=
    \int_\Omega
    (y-c)^2H'(x-X(t))\omega(t,z)\,dz .
\end{equation}

The following moving Lyapunov monotonicity estimate is the key ingredient in the proof. We state it here and postpone its proof to Section \ref{sec proof of Morawetz in channel}. 

We emphasize that the following estimate is uniform in $x_*$, $c$, $t_1$ and $t_2$. This is important---in the applications the trajectory will be chosen piecewise, with different slopes on different time intervals.
\begin{proposition}[Moving Lyapunov monotonicity]\label{prop:one-segment}

There exists a universal $\ep_0>0$ such that the following holds. 

 Let  $\om_{in}\in L^1 \cap L^{\infty}(\Omega)$ be the initial vorticity such that
 $$
 \om_{in}\ge 0 \quad \text{and} \quad \|\om_{in}\|_{L^{\infty}} \le \ep_0 
 $$
and let $\om(t)$  be the corresponding Yudovich solution to \eqref{eq:euler}.

Consider a  linear trajectory  $X: I \to \R$  on a  bounded interval $I \subset [ 0,\infty)$ and the quantities \eqref{eq def of M_I} and  \eqref{eq def of D_I}. Then $t\mapsto  \mathcal M_{X}(t)$ is Lipschitz on $I$ and 
\begin{equation}\label{eq:one-segment}
    \frac{d}{dt}\mathcal M_{X}(t)\ge\frac12 \mathcal D_{X}(t),
    \qquad a.e. \,\,
    t\in I.
\end{equation}

\end{proposition}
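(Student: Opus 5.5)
The plan is to compute $\frac{d}{dt}\mathcal M_X$ from the weak formulation of \eqref{eq:euler} and then show that every nonlinear contribution is either nonnegative or absorbed by $\frac12\mathcal D_X$. The step that absorbs the horizontal-velocity term is not yet closed; I say below exactly where it stands.

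\textbf{Derivative and Lipschitz bound.} Write $\varphi(t,z)=(y-c)H(x-X(t))$. This is Lipschitz in $t$ and $C^1$ in $z$, but unbounded in $x$ only through the bounded $H$, so it is an admissible test function. Yudovich solutions satisfy $\frac{d}{dt}\int\varphi\omega=\int\omega\,(\p_t\varphi+(y+u^x)\p_x\varphi+u^y\p_y\varphi)$ in the distributional sense. Justify this by cutting off in $x$ and using $\omega\in L^1$, $\|u\|_{L^\infty}\lesssim\|\omega\|_{L^\infty}$ from \eqref{eq:velocity-bound}, and $\|H\|_{L^\infty}\lesssim1$. The right-hand side is bounded uniformly on $I$, since $|y-c|$ is bounded on the support once $c$ is fixed. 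This gives Lipschitz continuity of $\mathcal M_X$. With $\p_t\varphi=-c(y-c)H'$, $\p_x\varphi=(y-c)H'$ and $\p_y\varphi=H$, the background transport $y\p_x\varphi$ combines with $-c(y-c)H'$ to give
\[
\frac{d}{dt}\mathcal M_X=\mathcal D_X+\int u^x(y-c)H'(x-X)\omega\,dz+\int u^yH(x-X)\omega\,dz .
\]

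\textbf{The vertical term is nonnegative.} Insert \eqref{eq Biot Savart} for $u^y$. The Green function depends on $x,x'$ only through $x-x'$ and is symmetric, so $K^y(z',z)=-K^y(z,z')$. Symmetrizing the double integral gives
\[
\int u^yH\omega=\tfrac12\iint K^y(z,z')\bigl(H(x-X)-H(x'-X)\bigr)\omega(z)\omega(z')\,dz\,dz' .
\]
This is $\ge0$ by \eqref{eq:anti Kx}, because $H$ is increasing and $\omega\ge0$; this is exactly where the sign of $\omega$ enters.

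\textbf{The horizontal term (main obstacle).} The aim is $\bigl|\int u^x(y-c)H'\omega\bigr|\le\frac12\mathcal D_X$ (plus, if needed, part of the positive vertical term). First try: split $y-c=(y'-c)+(y-y')$ inside the double integral $\iint K^x(z,z')(y-c)H'(x-X)\omega(z)\omega(z')$.

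For the $(y'-c)$ piece, apply Cauchy--Schwarz with weight $|K^x|$, and use $H'(x-X)\le e^{|x-x'|}H'(x'-X)$ together with \eqref{eq K1} and \eqref{eq change x of Kx}. One factor is then bounded by $C\|\omega\|_{L^\infty}\mathcal D_X$ in the variable $z$, and the other by $C\|\omega\|_{L^\infty}\mathcal D_X$ in the variable $z'$. The total is $\le C\ep_0\mathcal D_X\le\frac14\mathcal D_X$ for $\ep_0$ small.

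The remaining piece, $\iint (y-y')K^x(z,z')H'(x-X)\omega\omega$, has a kernel that is bounded near the diagonal. However, it is controlled by $\|\omega\|_{L^\infty}$ times the $H'$-weighted mass, not by $\mathcal D_X$. This is the genuine difficulty: vorticity sitting near $y=c$ has a small weight in $\mathcal D_X$. What I would try first is to symmetrize this piece as well, and pair it with the positive quantity $\tfrac12\iint K^y(H(x-X)-H(x'-X))\omega\omega$, using $H(x-X)-H(x'-X)\approx(x-x')H'$ near the diagonal. The hope is that the combined kernel, built from $(x-x')^2-(y-y')^2$ over $|z-z'|^2$, has a favorable sign or average, with the channel image terms handled by their exponential decay.

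If that pairing fails, the fallback is to revisit the splitting so that the factor $(y-c)$ is never moved off $z$. One would then exploit that $u^x$ near the stagnation level $y=c$ is itself of size $\ep_0$ times the local mass, and compare against $\mathcal D_X$ on a slightly wider strip. Closing this single term is what I expect to require the most care; the rest of the argument is routine.
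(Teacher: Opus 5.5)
Your derivative identity, the Lipschitz bound, and the nonnegativity of the vertical term (via symmetrization and \eqref{eq:anti Kx}) are correct and match the paper. The gap is the horizontal term $\mathcal N^x=\int (y-c)H'(x-X)u^x\omega\,dz$. This is the real content of the proposition, and neither half of your splitting $y-c=(y'-c)+(y-y')$ is controlled.

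\begin{itemize}
\item For the $(y'-c)$ piece, the integrand carries only one factor of a distance to the level $c$. The Cauchy--Schwarz argument you describe (with \eqref{eq K1} and \eqref{eq change x of Kx}) therefore leaves one factor comparable to $\|\omega\|_{L^\infty}\int H'(x-X)\omega\,dz$, the $H'$-weighted mass, not $\|\omega\|_{L^\infty}\mathcal D_X$.
\item For the $(y-y')$ piece you have no estimate. The proposed pairing with $\mathcal N^y$ rests on a kernel like $((x-x')^2-(y-y')^2)/|z-z'|^2$, which has no sign, so that route is doubtful. It is also unnecessary.
\end{itemize}

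The missing idea is to split the \emph{domain} instead of the weight. The $L^\infty$ bound on $\omega$ then converts the width of a strip around $y=c$ into the missing power of $|y-c|$. Start from
\[
|\mathcal N^x|\le\iint|y-c|H'(x-X)|K^x(z,z')|\omega(z)\omega(z')\,dz\,dz'
\]
and decompose into the regions $\{|y'-c|\le|y-c|\}$ and $\{|y-c|<|y'-c|\}$.

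\begin{itemize}
\item On the first region, integrate in $z'$ first. Since $\omega\le\|\omega\|_{L^\infty}$ and $\sup_{x,y,y'}\int_{\R}|K^x|\,dx'\lesssim1$ by \eqref{eq K1} with $\sigma=0$, the $z'$-integral over the strip $\{|y'-c|\le|y-c|\}$ is $\lesssim\|\omega\|_{L^\infty}|y-c|$. So this region contributes $\lesssim\|\omega\|_{L^\infty}\mathcal D_X$.
\item On the second region, integrate in $z$ first. Bound $|y-c|\le|y'-c|$, and use \eqref{eq change x of Kx} to replace $\int_{\R} H'(x-X)|K^x|\,dx$ by $CH'(x'-X)$. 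The strip $\{|y-c|<|y'-c|\}$ has length $\lesssim|y'-c|$, so this region contributes $\lesssim\|\omega\|_{L^\infty}\int|y'-c|^2H'(x'-X)\omega(z')\,dz'=\|\omega\|_{L^\infty}\mathcal D_X$.
\end{itemize}

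Hence $|\mathcal N^x|\lesssim\|\omega_{in}\|_{L^\infty}\mathcal D_X\le\frac12\mathcal D_X$ for $\ep_0$ small, and $\mathcal N^y$ is used only through its sign. The obstruction you identified (vorticity near $y=c$ carrying little weight in $\mathcal D_X$) is therefore not real. The part of $u^x(z)$ generated by vorticity in a strip $\{|y'-c|\le r\}$ is at most $C\|\omega\|_{L^\infty}r$, because $K^x$ is uniformly integrable in $x'$. In particular, since $|y-y'|\le|y-c|+|y'-c|$, the same argument shows your $(y-y')$ piece is also $\lesssim\|\omega\|_{L^\infty}\mathcal D_X$. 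This contradicts your assessment that it is controlled only by the $H'$-weighted mass.
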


\subsection{Proof of inviscid damping  in the channel}\label{subsection proof of thm1.1}

Now we are ready to prove our main theorem on $\R \times [-1,1]$.
\begin{proof}[Proof of Theorem \ref{thm:intro-channel-positive-damping}]
Let $\ep_0>0$ be given by Proposition \ref{prop:one-segment}. By Proposition \ref{prop:damping-circulation-equivalence}, it suffices to show that  
$$
    \sup_{ x_0\in\R} \int_{|x- x_0|\le 1} \int_{-1}^1 \om(t,x,y)\,dy\,dx \to 0
    \quad
    \text{as }\quad t\to\infty.
$$

We argue by contradiction. Suppose  that there exist  $\delta>0$,
a sequence of times $t_k\to\infty$ and centers $x_k\in\mathbb R$ such that
\begin{equation}\label{eq:bad-original}
    \int_{|x-x_k|\le 1}\int_{-1}^{1}
    \omega(t_k,x,y)\,dydx
    \ge\delta  \quad \text{for every} \quad k\in \N.
\end{equation}

\vspace{0.5em}
\noindent
\textbf{Step 1: Bounded slopes.}

Since $\om_{in} \in L^1$, we can choose $R>0$ such that 
\begin{equation}\label{eq:tail-choice}
    \int_{|x|>R}\omega_{in}(x,y)\,dxdy
    \le\frac{\delta}{4}.
\end{equation}
Let $\Phi_t$ be the flow map generated by the velocity $(y+u^x,u^y)$, namely
$$
\begin{cases}
\displaystyle
\frac{d}{dt}\Phi_t(z)
=
\left(
\Phi_t^y(z)+u^x(t,\Phi_t(z)),
u^y(t,\Phi_t(z))
\right),\\[1ex]
\Phi_{0}(z)=z.
\end{cases}
$$  

Next, we decompose the solution as the ``main'' plus ``tail'':
$
    \om  (t)=\om_{m} (t)+\om_{e} (t),
$
where
$$
    \om_{m}(t,\Phi_t(z))
    =
    \omega_{in}(z)\mathbf 1_{\{|x|\le R\}},
$$
and
$$
    \om_{e}(t,\Phi_t(z))
    =
    \omega_{in}(z)\mathbf 1_{\{|x|>R\}}.
$$
Since the velocity is incompressible, we see that
$$
    \int_{\Om}\om_{e}(t,z)\,dz
    =
    \int_{|x|>R}\omega_{in}(z)\,dz
    \le\frac{\delta}{4}.
$$
Therefore, it follows from \eqref{eq:bad-original} that
$$
    \int_{|x-x_k|\le 1 }\int_{-1}^{1}
    \om_{m}(t_k,x,y)\,dydx
    \ge\frac{3\delta}{4}.
$$
Moreover, one checks directly that there exists $V>0$ such that $|y+u^x(t,z)| \le V$ for any $z \in \R \times [-1,1]$ and $t \ge0$. So $\Supp \om_{m} (t) \subset \{|x|\le R+Vt\}$, which implies 
\begin{equation}\label{eq def x_n t_n}
    \left|\frac{x_k}{t_k}\right|
    \le
    V+\frac{R+1}{t_k}
\end{equation}
uniformly  for all sufficiently large $k$. 

\vspace{0.5em}
\noindent
\textbf{Step 2: Slopes of bounded variation.}

By \eqref{eq def x_n t_n}  and passing to a subsequence, there exists $c_{\infty} \in \R$ such that
$$
    \frac{x_k}{t_k}\to c_\infty.
$$
We choose a further subsequence, denoted as $(t_n,x_n)$, such that
\begin{equation}\label{eq def t_n 1}
    t_n\to\infty \quad \text{and} \quad t_n \ge 1 \quad \text{for all} \quad n \in \N,
\end{equation}
\begin{equation}\label{eq:bad-subsequence}
    \int_{|x-x_n|\le 1 }\int_{-1}^{1}
    \omega(t_n,x,y)\,dydx
    \ge\delta,
\end{equation}
and
\begin{equation}\label{eq:sparse-choice}
    t_{n+1}\ge4t_n,
    \qquad
    \left|\frac{x_n}{t_n}-c_\infty\right|\le2^{-n}.
\end{equation}
For each $n$, we define the slope
$$
    c_n=
    \frac{x_{n+1}-x_n}{t_{n+1}-t_n}.
$$
We will show that these slopes are of finite total variation due to the exponential   rates \eqref{eq:sparse-choice}. Indeed, we have
$$
    c_n-c_\infty
    =
    \frac{t_{n+1}\left(\frac{x_{n+1}}{t_{n+1}}-c_{\infty}\right)-t_n\left(\frac{x_n}{t_n}-c_{\infty}\right)}{t_{n+1}-t_n}.
$$
Since $t_{n+1}\ge4t_n$, we have $\frac{t_{n+1}+t_n}{t_{n+1}-t_n} \lesssim 1$, which yields
$$
    |c_n-c_\infty|
    \le \frac{(t_{n+1}+t_n)\left( \left| \frac{x_{n+1}}{t_{n+1}}-c_{\infty} \right|+\left| \frac{x_{n}}{t_{n}}-c_{\infty} \right|\right)}{t_{n+1}-t_n}
    \le C2^{-n}.
$$
Therefore, 
\begin{equation}\label{eq:total-turn}
   \sup_{n\ge1}|c_n|<\infty \qquad \text{and} \qquad \sum_{n=1}^{\infty}|c_{n+1}-c_n|<\infty.
\end{equation}

\vspace{0.5em}
\noindent
\textbf{Step 3: Moving Lyapunov functional along the path.}

\begin{figure}
\begin{tikzpicture}[scale=1.0]
    \draw[thick] (0,1.3) -- (11,1.3);
    \draw[thick] (0,-1.3) -- (11,-1.3);
    \node[left] at (0,1.3) {$y=1$};
    \node[left] at (0,-1.3) {$y=-1$};

    \coordinate (P0) at (1.0,-0.2);
    \coordinate (P1) at (3.7,0.45);
    \coordinate (P2) at (6.6,0.05);
    \coordinate (P3) at (9.5,0.75);

    \draw[very thick] (P0) -- (P1) -- (P2) -- (P3);

    \fill (P0) circle (1.6pt);
    \fill (P1) circle (1.6pt);
    \fill (P2) circle (1.6pt);
    \fill (P3) circle (1.6pt);

    \node[below left]  at (P0) {\tiny $X_n(t_n)$};
    \node[above]       at (P1) {\tiny $X_n(t_{n+1})=X_{n+1}(t_{n+1})$};
    \node[below]       at (P2) {\tiny $X_{n+1}(t_{n+2})$};
    \node[ right] at (P3) {$\cdots \cdots$};
\end{tikzpicture}
\caption{Schematic illustration of zig-zag lines in the moving functional argument.}
\label{fig:zig-zag}
\end{figure}

Define on each $[t_n,t_{n+1}]$ the trajectories (the moving observers, see Figure \ref{fig:zig-zag})
$$
    X_n(t):=x_n+c_n(t-t_n).
$$
Then by their definitions
\begin{equation}\label{eq X_n(t_n)}
    X_n(t_n)=x_n,
    \qquad
    X_n(t_{n+1})=x_{n+1}.
\end{equation}
With these trajectories $X_n(t)$, we define the  functionals
\begin{equation}\label{eq def of M_n}
  \mathcal   M_n(t)
    :=
    \int_\Omega
    (y-c_n)H(x-X_n(t))\omega(t,z)\,dz,
\end{equation} 
\begin{equation}\label{eq def of D_n}
     \mathcal  D_n(t)
    :=
    \int_\Omega
    (y-c_n)^2H'(x-X_n(t))\omega(t,z)\,dz.
\end{equation} 
It follows from Proposition~\ref{prop:one-segment} that
\begin{equation}\label{eq:segment-integral}
    \frac12\int_{t_n}^{t_{n+1}}  \mathcal D_n(t)\,dt
    \le
     \mathcal M_n(t_{n+1})-  \mathcal M_n(t_n).
\end{equation}
We now sum \eqref{eq:segment-integral} over $n=1,\dots,N$ and telescope to obtain
\begin{equation}\label{eq differential eq for D}\begin{aligned}
    \frac12
    \sum_{n=1}^{N}
    \int_{t_n}^{t_{n+1}}\mathcal D_n(t)\,dt
    &\le
    \sum_{n=1}^{N}
    \left[
       \mathcal  M_n(t_{n+1})- \mathcal M_n(t_n)
    \right] \\
    &= \mathcal M_N(t_{N+1})-\mathcal M_1(t_1) \\
    &+
    \sum_{n=1}^{N-1}
    \left[
        \mathcal M_n(t_{n+1})-\mathcal M_{n+1}(t_{n+1})
    \right].
    \end{aligned}
    \end{equation}
Using \eqref{eq X_n(t_n)}, we see that
$$
\begin{aligned}
    \mathcal M_n(t_{n+1})-  \mathcal M_{n+1}(t_{n+1})
    &=
    (c_{n+1}-c_n)
    \int H(x-x_{n+1})\omega(t_{n+1},z)\,dz.
\end{aligned}
$$
By Lemma \ref{lem:property of H}, $\om \ge0$ and $\| \om(t) \|_{L^1}=\| \om_{in} \|_{L^1}  $, we obtain
\begin{equation}\label{eq:jump}
    | \mathcal M_n(t_{n+1})-  \mathcal M_{n+1}(t_{n+1})|
    \lesssim
   |c_{n+1}-c_n| \|H\|_{L^\infty} \int \om_{in}(z)\,dz \lesssim |c_{n+1}-c_n|.
\end{equation}
Moreover, \eqref{eq:total-turn} also implies the uniform bound
\begin{equation}\label{eq uniform bounde for M_n}
    | \mathcal M_n(t)|
    \lesssim \|H\|_{L^\infty} \int \om_{in}(z)\,dz \lesssim 1.
\end{equation}
Using \eqref{eq differential eq for D}, \eqref{eq:jump} and \eqref{eq uniform bounde for M_n}, we obtain
$$
\begin{aligned}
    \frac12
    \sum_{n=1}^{N}
    \int_{t_n}^{t_{n+1}} \mathcal D_n(t)\,dt
    &\lesssim 1
    +
    \sum_{n=1}^{N-1}|c_{n+1}-c_n|,
\end{aligned}
$$
where the implicit constant is independent of $t$ and $N$.
By \eqref{eq:total-turn} again, we can let  $N\to\infty$ and obtain
\begin{equation}\label{eq:finite-budget}
    \sum_{n=1}^{\infty}
    \int_{t_n}^{t_{n+1}} \mathcal D_n(t)\,dt
    <\infty.
\end{equation}
 
\vspace{0.5em}
\noindent
\textbf{Step 4: The final contradiction.}

Next we prove that \eqref{eq:finite-budget} leads to a contradiction. 

Setting $\rho = \frac{\delta}{16} \min\{1,\|\om_{in} \|_{L^\infty}^{-1} \}$, by volume constraints and boundedness of the vorticity, it follows from
\eqref{eq:bad-subsequence} that  
\begin{equation}\label{eq:rho-choice}
    \int_{\substack{|x-X_n(t_n )|\le 1 \\ |y-c_n|\ge   2\rho}}
    \omega(t_n,x,y)\,dxdy
    \ge 
    \delta-8\rho\|\om(t_n)\|_{L^\infty}  \ge
    \frac{\delta}{2}.
\end{equation}
For this $\rho>0$, we have
\begin{equation}\label{eq:Dt rho-choice}
\begin{aligned}
    \mathcal D_n(t)
    &=
    \int_{\R \times [-1,1]}
    (y-c_n)^2H'(x-X_n(t))\omega(t,x,y)\,dxdy \\
    &\ge C_{ \rho,\delta} \int_{\substack{|x-X_n(t)|\le 2 \\ |y-c_n|\ge    \rho}}  \omega(t,x,y)\,dxdy .
\end{aligned}
\end{equation}

To derive a contradiction between \eqref{eq:rho-choice} and \eqref{eq:Dt rho-choice}, we use a continuity argument.

For any $z=(x,y)$ and $s,t\ge0$, let
$
    \Phi_{t,s}(z)
$
be the Lagrangian flow map associated with the velocity field
$(y+u^x,u^y)$, namely
\begin{equation}\label{eq step 4 flow}
    \begin{cases}
\frac{d}{dt}\Phi_{t,s}(z)
    =
    \left(
        \Phi_{t,s}^y(z)+u^x(t,\Phi_{t,s}(z)),
        u^y(t,\Phi_{t,s}(z))
    \right), & \\ 
    \Phi_{s,s}(z)=z. &
    \end{cases}
\end{equation}
 
By \eqref{eq:velocity-bound}, \eqref{eq step 4 flow}, and \eqref{eq:total-turn},  we have bounded propagation speeds:
$$
   \sup_{ t,s,z}  \left|
    \frac{d}{dt} \Phi_{t,s}(z)\right|+ \sup_{t,n } \left| \frac{d}{dt}  X_n(t) 
    \right|<\infty.
$$
Therefore, together with \eqref{eq:sparse-choice}, there exists a small $\tau>0$ independent of $n$ such that $t_n+ \tau <t_{n+1}$ and for any $t\in[t_n,t_n+\tau]$, 
\begin{equation}\label{eq step 4 flow 1}
\begin{aligned}
 \Phi_{t,t_n}( E_n)   \subset \left\{z:    |x-X_n(t)|\le 2 ,\quad  |y-c_n|\ge   \rho  \right\},
\end{aligned}
\end{equation}
where the set $E_n : = \{z : |x-X_n(t_n)|\le 1  ,\quad  |y-c_n|\ge 2  \rho   \} $ is the integration domain in \eqref{eq:rho-choice}.

Since $\om \ge 0$, the flow is area preserving, and $ \om(t, \Phi_{t,t_n}(z))  = \om(t_n,z)$, we obtain from \eqref{eq step 4 flow 1}  and \eqref{eq:rho-choice} that  for every $t\in[t_n,t_n+\tau]$, 
\begin{equation}\label{eq step 4 flow 2}
\begin{aligned}
 &  \int_{\substack{|x-X_n(t)|\le 2 \\ |y-c_n|\ge   \rho}}
    \omega(t,x,y)\,dxdy \ge \int_{\substack{|x-X_n(t_n)|\le 1 \\ |y-c_n|\ge  2 \rho}} 
    \omega(t_n,x,y)\,dxdy
    \ge \frac{\delta}{2}.
\end{aligned}
\end{equation}

Immediately, by \eqref{eq step 4 flow 2} and \eqref{eq:Dt rho-choice}, we have for every
$t\in[t_n,t_n+\tau]$,
\begin{equation}\label{eq:Dt final}
\begin{aligned}
    \mathcal D_n(t)
    &\ge C_{ \rho,\delta},
\end{aligned}
\end{equation}
where $C_{ \rho,\delta}$ is a positive constant independent of $n$ and $t$. Therefore,
$$
    \sum_{n=1}^{\infty}
    \int_{t_n}^{t_{n+1}}  \mathcal D_n(t)\,dt
    \ge
    \sum_{n=1}^{\infty}
    \int_{t_n}^{t_n+\tau} \mathcal D_n(t)\,dt
    \ge
    \sum_{n=1}^{\infty}C_{ \rho,\delta}\tau
    =
    \infty.
$$
This contradicts \eqref{eq:finite-budget} and hence completes the proof.
\end{proof}

We remark that  no uniform quantitative damping rate can be obtained from the Yudovich bounds alone.

\begin{remark}[No uniform rate from  Yudovich bounds]\label{rmk:no uniform rate channel}
The convergence in Theorem~\ref{thm:intro-channel-positive-damping} cannot be uniform over classes controlled only by
$$
\|\omega_{ in}\|_{L^1(\Omega)},\qquad
\|\omega_{ in}\|_{L^\infty(\Omega)},\qquad
|\Supp\omega_{ in}|.
$$
Indeed, these quantities are preserved by the time-reversible equation \eqref{eq:euler}. Hence any nonzero admissible state may be prescribed at an arbitrarily large time and evolved backward to initial data with the same bounds.

Consequently, no function
$$
f(\ep,t)\longrightarrow0
\qquad\text{as }t\to\infty
$$
can bound $\|u(t)\|_{L^\infty(\Omega)}$ uniformly for all nonnegative smooth initial data satisfying
$$
\|\om_{in}\|_{L^1}
+\|\om_{in}\|_{L^\infty}
+|\Supp\om_{in}|
\leq \ep.
$$
\end{remark}

\subsection{Proof of Lyapunov monotonicity}\label{sec proof of Morawetz in channel}

\begin{proof}[Proof of Proposition \ref{prop:one-segment}]

Since the given trajectory $X(t)$ is linear on the interval $I$, by the standard Yudovich theory/weak formulation, $t \mapsto \mathcal M_X(t)$ is Lipschitz on $t\in I$. We focus on obtaining that, for all initial data with sufficiently small $\|\om_{in}\|_{L^\infty}$,
$$
    \frac{d}{dt} \mathcal M_{X}(t)\ge\frac12  \mathcal D_{X}(t),
    \qquad a.e. \,\,
    t\in I.
$$

We differentiate $\mathcal M_X$, using the definitions of $ \mathcal M_X$ and $ \mathcal D_X$, i.e. \eqref{eq def of M_I} and \eqref{eq def of D_I} to obtain that
$$
    \frac{d}{dt} \mathcal M_{X}(t)
    =
     \mathcal D_{X}(t)+ \mathcal N^x (t)+ \mathcal N^y (t),
$$
where the damping term $\mathcal D_{X}$ is given in \eqref{eq def of D_I} and the  nonlinear terms  are
$$
 \mathcal N^x (t)
    :=
    \int_\Om
    (y-c)H'(x-X(t))u^x\om\,dxdy
$$
and
$$
    \mathcal  N^y (t)
    :=
    \int_\Om
    H(x-X(t))u^y\om\,dxdy.
$$

\vspace{0.5em}
\noindent
\textbf{Step 1:  Estimates of $\mathcal N^y$.}

We first expand $\mathcal N^y$ using the Biot--Savart kernel and symmetrize the double integral ($K^y(z,z') = -K^y(z',z ) $ by \eqref{eq:channel Ky}):
\begin{equation}\label{eq:proof_prop:one-segment_0}
\begin{aligned}
 \mathcal N^y 
    &= 
    \iint  H(x-X)  
    K^y(z,z')
    \omega(z)\omega(z')\,dzdz' \\
    &=
    \frac12
    \iint
    \left(
        H(x-X)-H(x'-X)
    \right)
    K^y(z,z')
    \omega(z)\omega(z')\,dzdz' .
\end{aligned}
\end{equation}

Since the weight $H$ is monotone increasing, the sign of the kernel in \eqref{eq:proof_prop:one-segment_0}
$
\left(
        H(x-X)-H(x'-X)
    \right)
    K^y(z,z')
$ is the same as the sign of $\left(
         x- x' 
    \right)
    K^y(z,z')$. Then by \eqref{eq:anti Kx} from Lemma \ref{lem:kernel_change x x'}, 
\begin{equation}\label{eq:proof_prop:one-segment_1}
\left(
        H(x-X)-H(x'-X)
    \right)
    K^y(z,z') \ge 0 \qquad \text{for all $(z,z') \in \Om \times \Om$.}
\end{equation}

Combining \eqref{eq:proof_prop:one-segment_1} with the fact that $\om(z)\ge 0$, we have
\begin{equation}\label{eq:proof_prop:one-segment_2}
\begin{aligned}
 \mathcal N^y 
    &=
    \frac12
    \iint
    \left(
        H(x-X)-H(x'-X)
    \right)
    K^y(z,z')
    \omega(z)\omega(z')\,dzdz' \ge 0.
\end{aligned}
\end{equation}

\vspace{0.5em}
\noindent
\textbf{Step 2:  Splitting $\mathcal N^x$.}

For $ \mathcal N^x $, using the Biot--Savart kernel again, it suffices to estimate
$$
\begin{aligned}
    |\mathcal N^x |
    &\le
    \iint
    |y-c|H'(x-X(t))|K^x(z,z')|
    \omega(z)\omega(z')\,dzdz' .
\end{aligned}
$$
We decompose the region $\Om \times \Om =\left( \R \times [-1,1] \right)^2$ into 
$$
    \left( \R \times [-1,1] \right)^2
    =
    \{|y'-c|\le |y-c|\}
    \cup
    \{|y-c|<|y'-c|\}.
$$

\vspace{0.5em}
\noindent
\textbf{Step 3:  Estimates of $\mathcal N^x$ for $\{|y'-c|\le |y-c|\}$.}

In the region $\{|y'-c|\le |y-c|\}$, we first estimate the $dz'$ integral. Using  \eqref{eq K1} from Lemma \ref{lem:kernel_change x x'} with $\sigma=0$ yields
$$
\begin{aligned}
    &\int_{\{|y'-c|\le |y-c|\}}
    |K^x(z,z')|\omega(z')\,dz' \\
    &\le
    \|\om(t)\|_{L^\infty} \left|
        \{y'\in[-1,1]: |y'-c|\le |y-c|\}
    \right|
    \sup_{x,y,y'}
    \int_{\mathbb R}
    |K^x(x,y;x',y')|\,dx'  \\
    &\lesssim
    \|\om_{in}\|_{L^\infty} |y-c|.
\end{aligned}
$$
Inserting the above bound in $\mathcal N^x$ then gives
\begin{equation}\label{eq:proof_prop:one-segment_3}
\begin{aligned}
   & \iint_{\{|y'-c|\le |y-c|\}}
    |y-c|H'(x-X(t))|K^x(z,z')|
    \omega(z)\omega(z')\,dzdz' \\
    &  \lesssim
    \|\om_{in}\|_{L^\infty}
    \int_\Omega
    |y-c|^2 H'(x-X(t)) \omega(z)\,dz  \\
    &\lesssim
    \|\om_{in}\|_{L^\infty}\mathcal D_{X}.
\end{aligned}
\end{equation}

\vspace{0.5em}
\noindent
\textbf{Step 4:  Estimates of $\mathcal N^x$ for $\{|y-c|<|y'-c|\}$.} 

Now consider the region $\{|y-c|<|y'-c|\}$. In this region, we first estimate the $dz$ integral.

It follows from Lemma \ref{lem:property of H} that
\begin{equation}\label{eq:proof_prop:one-segment_4}
\begin{aligned}
    &\int_{\{|y-c|<|y'-c|\}}
    |y-c|H'(x-X(t))|K^x(z,z')|\omega(z)\,dz \\
    &\le
    \|\om_{in}\|_{L^\infty} |y'-c|
    \int_{\{|y-c|<|y'-c|\}}
    \left(
        \int_{\mathbb R}
        H'(x-X(t) )|K^x(x,y;x',y')|\,dx
    \right)dy \\
    &\lesssim
    \|\om_{in}\|_{L^\infty} |y'-c| H'(x'-X(t) )
    \left|
        \{y\in[-1,1]: |y-c|< |y'-c|\}
    \right| \\
    &\lesssim
    \|\om_{in}\|_{L^\infty} |y'-c|^2H'(x'-X(t)).
\end{aligned}  
\end{equation}
Multiplying by $\omega(z')$ and integrating in $z'$ gives
\begin{equation}\label{eq:proof_prop:one-segment_5}
\begin{aligned}
  &  \iint_{\{|y-c|<|y'-c|\}}
    |y-c|H'(x-X(t))|K^x(z,z')|
    \omega(z)\omega(z')\,dzdz' \\
    &\lesssim
    \|\om_{in}\|_{L^\infty}
    \int_\Omega
    |y'-c|^2H'(x'-X(t))\omega(z')\,dz' \\
    &\lesssim
    \|\om_{in}\|_{L^\infty}\mathcal D_{X}.
\end{aligned}
\end{equation}
Gathering the estimates \eqref{eq:proof_prop:one-segment_2}, \eqref{eq:proof_prop:one-segment_3}, and \eqref{eq:proof_prop:one-segment_5}, we have  that
$$
 \mathcal  N^y (t)\ge0 \quad \text{and} \quad | \mathcal  N^x (t)| \lesssim
    \|\om_{in}\|_{L^\infty}\mathcal D_{X},
$$
which gives the desired result once we choose $\|\om_{in}\|_{L^{\infty}}$ small enough.
\end{proof}

\section{Non-damping in the infinite channel}\label{sec:nodamping_channel}

The damping mechanism established in the previous section relies crucially on the positivity of the vorticity. We now show that, once this sign condition is relaxed, Hamiltonian conservation can prevent velocity decay in the infinite channel. This yields quantitative lower bounds on the velocity and low-Sobolev examples of non-damping.

For the 2D Euler equations \eqref{eq:euler} in the channel $\Omega = \R \times [-1,1]$,  the Hamiltonian
$$
    \mathcal H[\om]
    :=
    \int_\Omega |u|^2\,dz
    +
    \int_\Omega y^2\om(z)\,dz 
$$
is conserved for initial data in the Yudovich class $\omega_{in } \in L^1\cap L^\infty(\Omega)$.

\subsection{No damping for negative vorticity}

We begin by showing how Hamiltonian conservation prevents damping for non-positive vorticity, yielding a uniform-in-time lower bound on the velocity.

The result below is stated under a more general Hamiltonian condition, which is satisfied in particular by a large class of negative perturbations.

\begin{lemma}[A non-damping criterion]
\label{lem:channel-non-damping-criterion}
Let $\om_{in}\in L^1 \cap L^{\infty}(\Omega)$ such that
$$
    \mathcal H[\om_{in}]
    >
    \int_{\Om} ( \om_{in})_+\,dz,
$$
and let $\om(t)$ be the solution of \eqref{eq:euler} with initial vorticity $\om_{in}$. Then for any $1\le p\le \infty$, there exists $c_p>0$ such that
\begin{equation}
    \|u(t)\|_{L^p} \ge c_p
\end{equation}
for all $t\ge 0$.
 
\end{lemma}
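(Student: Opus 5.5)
We will combine Hamiltonian conservation with the trivial bound $y^2\le 1$ in the channel. Since $\mathcal H$ is conserved along Yudovich solutions, for every $t\ge0$
\[
\int_\Omega |u(t)|^2\,dz = \mathcal H[\om_{in}] - \int_\Omega y^2\om(t,z)\,dz \ge \mathcal H[\om_{in}] - \int_\Omega y^2\om_+(t,z)\,dz \ge \mathcal H[\om_{in}] - \int_\Omega \om_+(t,z)\,dz .
\]
Here we used $-y^2\om\ge -y^2\om_+$ and $y^2\le1$ on $\R\times[-1,1]$. Because $\om(t)$ is transported by an area-preserving flow, $\om_+(t)$ is the rearrangement of $(\om_{in})_+$, so $\int\om_+(t)=\int(\om_{in})_+$. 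This gives the uniform lower bound
\[
\|u(t)\|_{L^2}^2\ge \mathcal H[\om_{in}]-\int_\Om(\om_{in})_+\,dz=:\kappa>0 .
\]

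To pass from $L^2$ to $L^p$ we interpolate against the uniform bounds \eqref{eq:velocity-bound}:
\[
\|u(t)\|_{L^1}\lesssim\|\om_{in}\|_{L^1},\qquad \|u(t)\|_{L^\infty}\lesssim\|\om_{in}\|_{L^\infty}.
\]
Both right-hand sides are conserved, so $\|u(t)\|_{L^1}+\|u(t)\|_{L^\infty}\le A$ for all $t$.

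For $2\le p\le\infty$ we use Hölder's inequality $\|u\|_2^2\le\|u\|_1^{1-\theta}\|u\|_p^{\theta}$ with the appropriate $\theta\in(0,1]$. Together with $\|u\|_1\le A$ this gives $\|u(t)\|_p\ge (\kappa A^{-(1-\theta)})^{1/\theta}$.

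For $1\le p<2$ we instead use $\|u\|_2^2\le\|u\|_p^{\theta}\|u\|_\infty^{2-\theta}$ for suitable $\theta$. For $p=1$ this is simply $\|u\|_2^2\le\|u\|_1\|u\|_\infty$, so $\|u\|_1\ge\kappa/A$. In both cases the lower bound is a constant $c_p>0$ depending only on $\kappa$, $A$ and $p$.

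The only delicate points are justifying the two conservation laws at Yudovich regularity. The conservation of $\mathcal H$ is asserted in the paper just before the lemma. The identity $\int\om_+(t)=\int(\om_{in})_+$ follows from the Lagrangian representation $\om(t,\Phi_t(z))=\om_{in}(z)$ and incompressibility. I expect no real obstacle beyond these.
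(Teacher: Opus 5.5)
Your proof is correct and follows the paper's route. Hamiltonian conservation with $0\le y^2\le 1$ and $\int\om_+(t)=\int(\om_{in})_+$ gives the uniform $L^2$ lower bound, and the uniform velocity bounds transfer it to every $L^p$; the paper does this transfer in one line via $\|u\|_{L^2}^2\le\|u\|_{L^p}\|u\|_{L^{p'}}$ with $\|u\|_{L^{p'}}\lesssim 1$. One slip to fix: for $2\le p\le\infty$ the interpolation must be homogeneous of degree two, namely $\|u\|_{L^2}^2\le\|u\|_{L^1}^{2-\theta}\|u\|_{L^p}^{\theta}$ with $\theta=p'\in[1,2]$, not exponents $1-\theta$ and $\theta$ (as written it fails by scaling, e.g.\ $\|u\|_{L^2}^2\le\|u\|_{L^\infty}$ at $p=\infty$), and with that correction your conclusion goes through unchanged.
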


\begin{proof}
Since $0\le y^2\le1$, by conservation of $\mathcal H$, we have
\begin{equation}\label{eq sec 41 L2 of u lower}
\begin{aligned}
\|u(t)\|_{L^2}^2=\mathcal H[\om_{in}]-\int_{\Om} y^2\om(t,z)\,dz \ge \mathcal H[\om_{in}]-\int_{\Om} ( \om_{in})_+\,dz>0.
    \end{aligned}
\end{equation}

Recall from \eqref{eq:velocity-bound} that, for any $1 \le q \le \infty$,  $$\|u(t)\|_{L^q}\lesssim_q 1.$$ 
Hence, using H\"older's inequality, for every $1\le p \le \infty$ we have 
$$
1 \lesssim \|u(t)\|_{L^2}^2 \le \|u(t)\|_{L^p}\|u(t)\|_{L^{p'}} \lesssim \|u(t)\|_{L^p}.
$$
This completes the proof.

\end{proof}

\subsection{Small Sobolev non-damping data}

We now use the preceding criterion to construct a class of non-damping examples that are arbitrarily small in $W^{s,p}$ for every $s<1+ \frac1p$.  For the stationary problem, the Sobolev threshold $s = 1+ \frac1p$ was identified for $p=2$  in the periodic case~\cite{MR2796139}, and more recently in the infinite case~\cite{2605.19971} for general $1\le p\le \infty$.

The specific construction below is based on the anisotropic scaling discovered in ~\cite{2605.19971}.

\begin{proposition}
\label{prop:small-Hs-negative-nondamping}
Let $s<1+\frac1p$ and $1\le p\le \infty$. For every $\ep>0$, there exists a smooth, compactly supported, non-positive vorticity
$$
    \om_{in}\in C_c^\infty(\Omega),
    \qquad
    \om_{in}\le 0,
    \qquad
    \|\om_{in}\|_{W^{s,p}(\Omega)}+\|\om_{in}\|_{L^1(\Omega)}+\|\om_{in}\|_{L^{\infty}(\Omega)}<\ep,
$$
such that for any $1 \le q \le +\infty$, there exists $c_q>0$ such that
\[
    \|u(t)\|_{L^q(\Omega)}\ge c_q
\]
holds for any $t \ge 0$.
\end{proposition}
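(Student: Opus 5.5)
The plan is to reduce everything to Lemma~\ref{lem:channel-non-damping-criterion}. For non-positive data $(\om_{in})_+\equiv0$, so the criterion reads
$$
\mathcal H[\om_{in}]>0,\qquad\text{i.e.}\qquad \|u_{in}\|_{L^2}^2>\int_\Om y^2|\om_{in}|\,dz .
$$
It therefore suffices to build small non-positive bumps concentrated near the stagnation line $y=0$. There the weight $y^2$ makes the moment term tiny, while the kinetic energy, which is quadratic in the circulation, stays comparatively large.

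\textbf{Construction.} Fix $0\le\varphi\in C_c^\infty(\R^2)$ with $\varphi\not\equiv0$ and $\Supp\varphi\subset\{|x|\le1,|y|\le1\}$. For a small parameter $\ell\in(0,\frac14)$ and a large constant $M$ (fixed later, independent of $\ell$), set
$$
\om_{in}(x,y)=-M\ell\,\varphi\!\left(x,\tfrac{y}{\ell}\right).
$$
This is smooth, compactly supported in $\{|x|\le 1,|y|\le\ell\}\subset\Om$, and non-positive. A finer anisotropic scaling in $x$ (in the spirit of \cite{2605.19971}) is available if needed, but I expect the plain thin-in-$y$ scaling to suffice. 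The following scalings are direct computations:
$$
\|\om_{in}\|_{L^\infty}\approx M\ell,\qquad \|\om_{in}\|_{L^1}\approx M\ell^2,\qquad \int y^2|\om_{in}|\approx M\ell^4 .
$$
Since each $y$-derivative costs a factor $\ell^{-1}$ and the measure of the support is $\approx\ell$, we get for integer $s$, and then for real $s$ by interpolation,
$$
\|\om_{in}\|_{W^{s,p}}\lesssim_{\varphi} M\ell\cdot\ell^{-s}\cdot\ell^{1/p}=M\ell^{\,1+\frac1p-s}.
$$
Because $s<1+\frac1p$, all three norms tend to $0$ as $\ell\to0$ with $M$ fixed.

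\textbf{Energy lower bound (main step).} I would write
$$
\|u_{in}\|_{L^2}^2=-\int\psi\om_{in}=-\iint G(z,z')\om_{in}(z)\om_{in}(z')\,dz\,dz' .
$$
The Dirichlet Green function is strictly negative in the interior. From the explicit formula, for $z,z'$ in the compact box $\{|x|\le1,|y|\le\frac14\}$ one has $G(z,z')\le -c_0<0$. Indeed, the numerator $\cosh(a\xi)-\cos(a(y-y'))$ stays bounded above, while the denominator $\cosh(a\xi)+\cos(a(y+y'))$ is at least $1+\cos(\pi/4)$. This makes the ratio uniformly less than $1$ away from the boundary, and it tends to $0$ on the diagonal, where the logarithm is $-\infty$, which only helps. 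Since $\om_{in}\le0$, the integrand $G\,\om_{in}\om_{in}$ is $\le 0$, so
$$
\|u_{in}\|_{L^2}^2\ge c_0\|\om_{in}\|_{L^1}^2\approx c_0 M^2\ell^4 .
$$
Comparing with $\int y^2|\om_{in}|\approx M\ell^4$, choose $M$ large (depending only on $\varphi$ and $c_0$) so that $c_0M^2\ell^4>C M\ell^4$ for all $\ell$. This yields $\mathcal H[\om_{in}]>0=\int(\om_{in})_+$.

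\textbf{Conclusion.} With $M$ fixed, take $\ell$ small so that the $W^{s,p}$, $L^1$ and $L^\infty$ norms are below $\ep$. Then Lemma~\ref{lem:channel-non-damping-criterion} gives $\|u(t)\|_{L^q}\ge c_q>0$ for all $t\ge0$ and all $1\le q\le\infty$.

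The only delicate point is the uniform negativity bound $G\le -c_0$ on the box, which must be independent of $\ell$. This holds because the box is fixed and the data only shrink inside it.
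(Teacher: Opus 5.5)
Your argument is correct, but it verifies the Hamiltonian criterion by a softer route than the paper.

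\textbf{The paper's route.} The paper shrinks the bump in both directions, $\om_{in}=-\ep^{1-3\delta}\eta(x/\ep^{2\delta},y/\ep)$. It then bounds the kinetic energy from below using the logarithmic singularity of the Green function on the shrinking support, $-G\gtrsim\log\frac1{|z-z'|}$. This gives $\|u_{in}\|_{L^2}^2\gtrsim\ep^{4-2\delta}|\log\ep|$ against a vertical moment of size $\ep^{4-\delta}$. The ratio $\ep^{-\delta}|\log\ep|$ diverges, so no tuning constant is needed. The cost is a harmless $3\delta$ loss in the Sobolev exponent.

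\textbf{Your route.} You keep an $O(1)$ profile in $x$ and use only the fact that $G\le -c_0$ on a fixed interior box, so $\|u_{in}\|_{L^2}^2\ge c_0\|\om_{in}\|_{L^1}^2$. You compensate for the lack of any divergent factor with the large amplitude constant $M$.

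\textbf{Comparison.} Your version gives the exact exponent $1+\frac1p-s$ and never uses the near-diagonal asymptotics of $G$. It also shows the mechanism plainly: the criterion holds once the circulation exceeds the square of the thickness by a large enough constant. The paper's version instead yields an energy-to-moment margin that diverges as $\ep\to0$.

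\textbf{One step needs fixing.} The statement ``numerator bounded above, denominator bounded below'' does not imply that the ratio inside the logarithm in $G$ is less than $1$; numerically the numerator can be about $10$ while the denominator is about $1.7$. The correct reason is that, on your box, denominator minus numerator equals
\[
\cos(a(y+y'))+\cos(a(y-y'))=2\cos(ay)\cos(ay')\ge 2\cos^2(\pi/8),
\]
while the denominator is at most $\cosh\pi+1$. Hence the ratio is at most $1-\frac{2\cos^2(\pi/8)}{\cosh\pi+1}<1$, uniformly in $\ell$. Alternatively, use the strong maximum principle together with upper semicontinuity of $G$ on the compact set (box)$\times$(box).
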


\begin{proof}
First, fix a nonnegative function
$
    \eta\in C_c^\infty(\R^2)
$
supported near the origin and normalized by $\int_{\R^2} \eta\,dz=1$.

Then choose an exponent $\delta>0$ such that
\begin{equation}\label{eq:small-Hs-negative-nondamping 0}
0<\delta <1/3 \quad \text{and} \quad   s<  1+\frac1p    -  3 \delta.
\end{equation}  
For any parameter $0<\ep\ll1$, define the initial vorticity as
\begin{equation}\label{eq:small-Hs-negative-nondamping 1}
\om_{in}(z):=
-\ep^{1-3\delta } \eta\left(\frac{x}{\ep^{2\delta}},\frac{y}{\ep}\right).
\end{equation}  
It follows that $\om_{in}\le0$ and we can assume that $\ep>0$ is sufficiently small such that $\om_{in}\in C_c^\infty(\Omega)$ is well-defined in the channel.

From the definition \eqref{eq:small-Hs-negative-nondamping 1}, a direct calculation gives
$$
 \|\om_{in}\|_{L^1} \lesssim \ep^{2-\delta}, \qquad   \|\om_{in}\|_{L^\infty } \lesssim \ep^{1-3\delta} 
$$
and
$$
    \|\om_{in}\|_{W^{s,p}}
    \lesssim
    \ep^{1-3\delta }
    \ep^{\frac{2\delta+1}{p}}
    \ep^{-s}
    \le
    \ep^{ 1+ \frac{1}{p}- s -    3 \delta } .
$$
By \eqref{eq:small-Hs-negative-nondamping 0}, the exponents of $\ep$ above are all positive, and hence 
$\|\om_{in}\|_{W^{s,p}(\Omega)}+\|\om_{in}\|_{L^1(\Omega)}+\|\om_{in}\|_{L^{\infty}(\Omega)} \to 0$  as  $\ep \to 0$.

Next, we compute the initial Hamiltonian to apply the non-damping criterion from Lemma \ref{lem:channel-non-damping-criterion}. Let $u_{in}$ be the velocity generated by $\om_{in}$. Recall the Green  function for $\Delta$ on $\Omega$, 
\[
    -G_{\Omega}(z,z')
    =
    \frac{1}{2\pi}\log\frac{1}{|z-z'|}
    +
    H_{\Omega}(z,z'),
\]
where $H_{\Omega}$ is bounded away from the boundary.
Thus, 
\[
    -G_{\Omega}(z,z')
    \gtrsim
    \log\frac{1}{|z-z'|} \quad \text{for every $z,z'\in\Supp\om_{in}$}
\]
provided $\ep$ is sufficiently small. Since
\[
\int_{\Omega}|u_{in}|^2\,dz
=
-\iint_{\Omega\times\Omega}
G_{\Omega}(z,z')\om_{in}(z)\om_{in}(z')\,dzdz',
\]
we have
\begin{align*}
    \int_{\Omega} |u_{in}|^2 dz &\gtrsim \iint_{\Om\times \Om} \log \frac{1}{|z-z'|} \om_{in}(z)\om_{in}(z')\,dz\,dz' \\
    &=  \ep^{2+2\delta} \iint_{\R^2} \log \frac{1}{\ep^{2\delta}|z-z'|} \eta(x,\ep^{2\delta-1}y) \eta(x',\ep^{2\delta-1}y') \,dz\,dz' \\
    &= 2 \delta\ep^{2+2\delta} |\log \ep| \left(\int \eta(x,\ep^{2\delta-1}y)\,dz\right)^2\\
    &+\ep^{2+2\delta} \iint_{\R^2} \log \frac{1}{|z-z'|} \eta(x,\ep^{2\delta-1}y) \eta(x',\ep^{2\delta-1}y') \,dz\,dz' \\
    &:=I_1+I_2.
\end{align*}
For $I_1$, a direct computation using the scaling gives $I_1= 2\delta\ep^{4-2\delta}|\log \ep|$. For $I_2$, note that $|z-z'| \le 100$ holds in the support of $\eta(x,\ep^{2\delta-1}y)\eta(x',\ep^{2\delta-1}y')$. So we have $\log \frac{1}{|z-z'|} \ge -\log 100$, which gives
$$
I_2 \gtrsim -\ep^{4-2\delta}.
$$

Combining the estimates for $I_1$ and $I_2$, we have
$$
\int_{\Om} |u_{in}|^2 \,dz \gtrsim  \ep^{4-2\delta}|\log \ep| \quad \text{for all sufficiently small $\ep>0$}.
$$
Next, we estimate the second vertical moment. By the scaling in \eqref{eq:small-Hs-negative-nondamping 1}, we have
$$
    \int_\Omega y^2 \om_{in}(x,y)\,dxdy
    \approx
    -\ep^{4-\delta}.
$$
Therefore, for all sufficiently small $\ep>0$ the initial Hamiltonian is positive,
$$
    \mathcal H[\om_{in}]
    =
    \int_\Omega |u_{in}|^2\,dz
    +
    \int_\Omega y^2 \om_{in}\,dz
    >0.
$$
Since $\max\{\om_{in},0\}=0$, applying Lemma \ref{lem:channel-non-damping-criterion} we get the desired non-damping conclusion.
\end{proof}
\begin{remark}
Since the non-damping criterion is an open condition, one non-positive non-damping example gives rise to open neighborhoods of sign-changing ones.

\end{remark}

\subsection{Lower bounds for velocity}

Lastly, we prove velocity lower bounds from conservation of the
Hamiltonian. This complements the non-existence of a universal damping rate in Section \ref{sec:channel_damping}.

The nonlinear lower bound in \eqref{eq:couette-lower-bound} matches the linear upper bound established in Appendix~\ref{append:linear rate}, showing that nonlinear effects do not in general yield faster decay.  

\begin{theorem}[$L^p$ lower bounds for inviscid damping]
\label{thm:couette-lower-bound}
Let $\om(t)$ be a solution to \eqref{eq:euler} on
$\Omega=\mathbb R\times[-1,1]$. Assume
$
    \om_{in}\in   L^\infty(\Omega)
$
with compact support and
$$
    \int_\Omega\om_{in}\,dz-\mathcal H[\om_{in}]\neq0.
$$
Then, for every $1\le p\le\infty$,
\begin{equation}\label{eq:couette-lower-bound}
    \|u(t)\|_{L^p(\Omega)}
    \geq C_{ {in} }
    \langle t\rangle^{-1/p'} \quad \text{for all $t\ge0$,}
\end{equation}
 where
$
    \frac1p+\frac1{p'}=1.
$
The   constant $C_{ {in} }$ depends only on $
    \left|
        \int_\Omega\om_{in}\,dz-\mathcal H[\om_{in}]
    \right|$, $p$, $
    \|\om_{in}\|_{L^1(\Omega)\cap L^\infty(\Om)}
$ and
$
    \operatorname{diam}_x(\Supp\om_{in}).
$
\end{theorem}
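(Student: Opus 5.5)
The plan is to use a conserved quantity that is linear in $u$ up to a quadratic correction, and to localize it to the support of $\om(t)$, which grows at most linearly in $x$. Set $\kappa:=\int_\Omega\om_{in}\,dz-\mathcal H[\om_{in}]\neq0$. Both $\int_\Omega\om\,dz$ and $\mathcal H$ are conserved for Yudovich solutions, so $\kappa$ is constant in time. We then rewrite it using the Biot--Savart structure. Since $\om=\p_x u^y-\p_y u^x$ and $1-y^2$ vanishes on $y=\pm1$, integration by parts in $y$ gives
\[
\int_\Omega (1-y^2)\om\,dz = -\int_\Omega (1-y^2)\p_y u^x\,dz=-2\int_\Omega y\,u^x\,dz .
\]
The $\p_x u^y$ term integrates to zero because $u\in L^1$ by \eqref{eq:velocity-bound}, so $u^y\to0$ in an averaged sense as $|x|\to\infty$. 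Therefore
\[
\kappa=\int_\Omega(1-y^2)\om(t)\,dz-\int_\Omega|u(t)|^2\,dz=-2\int_\Omega y\,u^x(t)\,dz-\int_\Omega|u(t)|^2\,dz
\]
for every $t\ge0$. I would justify these identities for smooth compactly supported data and pass to Yudovich solutions by approximation; alternatively, one can work directly with $\psi$, using $\Delta(1-y^2)=-2$ and the Dirichlet boundary conditions.

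It follows that $|\kappa|\le\int_\Omega\bigl(2|u|+|u|^2\bigr)\,dz\lesssim\int_\Omega|u|\,dz$, since $\|u\|_{L^\infty}\lesssim\|\om_{in}\|_{L^\infty}$. Next comes the localization. As in Step 1 of the proof of Theorem~\ref{thm:intro-channel-positive-damping}, the transport speed satisfies $|y+u^x|\le V$, and $V$ depends only on $\|\om_{in}\|_{L^\infty}$. Hence $\Supp\om(t)\subset\{|x-x_c|\le D+Vt\}$, where $x_c$ is the horizontal center of $\Supp\om_{in}$ and $D=\operatorname{diam}_x(\Supp\om_{in})$. By \eqref{eq estimate on K}, for $z$ at horizontal distance $d\ge1$ from $\Supp\om(t)$ we have $|u(z)|\lesssim e^{-\frac\pi2 d}\|\om_{in}\|_{L^1}$. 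Consequently,
\[
\int_{\{\operatorname{dist}_x(z,\Supp\om(t))>M\}}|u|\,dz\lesssim \|\om_{in}\|_{L^1}e^{-\frac\pi2 M}.
\]
We choose $M$, depending only on $|\kappa|$ and $\|\om_{in}\|_{L^1\cap L^\infty}$, so that this tail contributes at most $|\kappa|/2$ to the bound for $|\kappa|$ above.

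Let $S_t:=\{|x-x_c|\le D+Vt+M\}\times[-1,1]$, so that $|S_t|\lesssim\langle t\rangle$. The previous two steps give $|\kappa|/2\lesssim\int_{S_t}|u|\,dz$. Hölder's inequality then yields
\[
|\kappa|\lesssim |S_t|^{1/p'}\|u(t)\|_{L^p}\lesssim \langle t\rangle^{1/p'}\|u(t)\|_{L^p},
\]
which is \eqref{eq:couette-lower-bound}, with constants depending only on $|\kappa|$, $p$, $\|\om_{in}\|_{L^1\cap L^\infty}$ and $D$. The main obstacle I expect is the rigorous justification of the identity expressing $\kappa$ through $\int y\,u^x$ at the Yudovich level, including the vanishing of the $\p_x u^y$ boundary-at-infinity terms. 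The localization and Hölder steps are routine once the exponential screening \eqref{eq estimate on K} is available.
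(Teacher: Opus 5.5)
Your argument is correct. It shares the paper's skeleton: conservation of $\kappa$, integration by parts against $1-y^2$, linear growth of $\Supp_x\om(t)$, and H\"older on a set of measure $\lesssim\langle t\rangle$. Where it differs is in how you localize and how you treat the quadratic term.

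The paper multiplies by a cutoff $\eta_t(x)=\eta(x/R_t)$ equal to $1$ on $\Supp_x\om(t)$. Then $\int_\Omega(1-y^2)\om\,dz=\int_\Omega\eta_t(1-y^2)\om\,dz$ holds exactly, and the integration by parts places $u$ only on $\Supp\eta_t$. This also removes the justification issue you flag. At each fixed $t$ the identity is just the weak form of $\om=\p_xu^y-\p_yu^x$, tested against a function compactly supported in $x$ and vanishing at $y=\pm1$. The only dynamical input is conservation of $\int\om$ and $\mathcal H$, so no approximation of solutions is needed.

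The paper then leaves $\|u(t)\|_{L^2}^2$ global and splits into two cases. If $\|u(t)\|_{L^2}^2\ge|\kappa|/2$, interpolation with $\|u\|_{L^1}+\|u\|_{L^\infty}\lesssim1$ gives an $O(1)$ lower bound in every $L^p$. Otherwise the cutoff term gives $R_t^{1/p'}\|u(t)\|_{L^p}\gtrsim|\kappa|$.

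You instead absorb the quadratic term via $|u|^2\le\|u\|_{L^\infty}|u|$ and localize $u$ itself. This needs the pointwise screening $|K(z,z')|\lesssim e^{-\frac\pi2|x-x'|}$ to make the tail outside $S_t$ small. Your tail bound is right: by Fubini, for each $z'\in\Supp\om(t)$ the complement of $S_t$ lies in $\{|x-x'|>M\}$.

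So your version trades the case analysis for one extra use of the kernel decay and ends in a single H\"older step. The paper's version never needs information about $u$ away from $\Supp\om(t)$ beyond the global $L^1$ and $L^\infty$ bounds. Both give constants with the stated dependence.
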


\begin{proof}

By the translation invariance of the equation in the $x$-direction, we assume without loss of generality that
\begin{equation}\label{eq support of w in}
    \Supp_x\om_{in}
    \subset[-R_0,R_0].
\end{equation}
Using the velocity bound in the channel \eqref{eq:velocity-bound} and the fact that $\om$ is transported by the velocity $(y+u^x,u^y)$, we may fix a constant $V_0>0$ (depending only on the initial data) such that
 \begin{equation}\label{eq support of w}
    \Supp_x\om(t)
    \subset[-R_t,R_t] \quad \text{with} \,\, R_t: = 1+R_0 + V_0 t.
\end{equation}

Now choose a standard cutoff function $\eta \in C^\infty_c(\R)$ such that  $\eta(x)=1$ on $[-1,1]$ and $\eta(x)=0$ provided $|x| \ge 2$.

Setting $\eta_t(x)=\eta\left( \frac{x}{R_t} \right)$,  for any $t\ge 0$ we have
\begin{equation}\label{eq hamiltonian obs}
\begin{aligned}
    \int_\Omega\om_{in}\,dz-\mathcal H[\om_{in}]
    &=
    \int_\Omega(1-y^2)\om(t,z)\,dz
    -
    \|u(t)\|_{L^2(\Omega)}^2
    \\
    &=
    \int_\Omega\eta_t(x)(1-y^2)\om(t,z)\,dz
    -
    \|u(t)\|_{L^2(\Omega)}^2.
\end{aligned}
\end{equation}
Using
$
    \om=\partial_xu^y-\partial_yu^x,
$
 we obtain
$$
    \int_\Omega\eta_t(1-y^2)\om(t,z)\,dz
    =
    -\int_\Omega\eta_t'(1-y^2)u^y(t,z)\,dz
    -
    2\int_\Omega\eta_t y u^x(t,z)\,dz
$$
since $1-y^2$ vanishes on the boundary $y=\pm 1$. For any $1\le p\le\infty$,   H\"older's
inequality yields
$$
    \left|
        \int_\Omega\eta_t(1-y^2)\om(t,z)\,dz
    \right|
    \le
    C_p R_t^{1/p'}\|u(t)\|_{L^p(\Omega)}.
$$

For brevity let us define
$$
H_0 :=
    \left|
        \int_\Omega \om_{in}\,dz-\mathcal H[\om_{in}]
    \right|.
$$
Then \eqref{eq hamiltonian obs} gives
$$
H_0 \le
    \|u(t)\|_{L^2(\Omega)}^2
    +
    C_pR_t^{1/p'}\|u(t)\|_{L^p(\Omega)},
$$
which implies that either $
    \|u(t)\|_{L^2(\Omega)}^2\ge\frac{H_0}{2}
$ or $C_pR_t^{1/p'}\|u(t)\|_{L^p(\Omega)} \ge \frac{H_0}{2}$.

\vspace{0.5em}
\noindent
\textbf{Case 1: $\|u(t)\|_{L^2(\Omega)}^2\ge\frac{H_0}{2}
$.}  

In this case we show all $L^p$ norms of $ u$ have an $O(1)$ lower bound, which is stronger than \eqref{eq:couette-lower-bound}. Indeed, for $1\le p\le2$, interpolation together with \eqref{eq:velocity-bound} yields
$$
\begin{aligned}
    \|u(t)\|_{L^p(\Omega)}
    \ge
    \|u(t)\|_{L^2(\Omega)}^{2/p}
    \|u(t)\|_{L^\infty(\Omega)}^{1-2/p}
    \ge C_{in}
\end{aligned}
$$
and similarly, for $2\le p\le\infty$,  
$$
\begin{aligned}
    \|u(t)\|_{L^p(\Omega)}
    \ge
    \|u(t)\|_{L^2(\Omega)}^{2(p-1)/p}
    \|u(t)\|_{L^1(\Omega)}^{-(p-2)/p}
    \ge C_{in}.
\end{aligned}
$$

\vspace{0.5em}
\noindent
\textbf{Case 2: $ C_p R_t^{1/p'}\|u(t)\|_{L^p(\Omega)}
    \ge
    \frac{H_0}{2} $}. 
    
In this case the definition of $R_t$ implies the desired lower bound. Since
$$
    R_t
    =
    1+R_0+V_0 t
    \le C_{in } \langle t \rangle,
$$
we have
$$
    \|u(t)\|_{L^p(\Omega)}
    \ge
    \frac{H_0}{2C_p}R_t^{-1/p'} \ge  C_{in } \langle t \rangle^{-1/p'}.
$$
 
\end{proof}

\section{Damping for positive vorticity in the whole plane}\label{sec:damping_wholespace}

We now turn to analysis in the whole plane, where the absence of boundaries fundamentally changes the long-time dynamics. 

For small nonnegative vorticity, the same Lyapunov strategy  no longer yields uniform global damping. Instead, we prove evacuation   and   velocity decay along a set of times of asymptotic density one. At the end of the section, we demonstrate quantitative enhanced dispersion and growth of H\"older norms.

\subsection{The full-plane weight}
As in the channel case, we use a weighted Lyapunov function. Due to the different asymptotics of the Biot--Savart kernel, we use a power-type weight instead of the exponential one there. 

Throughout this section, we define\footnote{The exponent $3/2$ is not essential; any exponent
$p\in(1,2)$ would work.}
\begin{equation}\label{eq: R2 def H}
    H(x):= c_H \int_0^{x} \langle s \rangle^{-3/2}\,ds \quad \text{for $x\in \R$},
\end{equation}
and   we take the normalization constant $c_H>0$ with $\|H\|_{L^\infty} =1$ for later convenience.

Then we have
\begin{equation}\label{eq:R2-weight}
    |H(x)| \leq 1,
    \qquad
    H'(x)=c_H \langle x \rangle^{-3/2}>0.
\end{equation}
Moreover, since $H$ is increasing, we obtain
\begin{equation}\label{eq:R2-monotone-Ky}
    \bigl(H(x)-H(x')\bigr)(x-x')\ge0.
\end{equation}

Throughout this section, we consider   nonnegative solutions $\om(t)$ of \eqref{eq:euler} with initial data $\om_{in}$ satisfying
$$
\om_{in}\in L^1\cap L^\infty(\R^2) \quad \text{and} \quad \int |z|^2 \om_{in}( z) \, dz < \infty.
$$

\subsection{A continuum of moving functionals}

We now introduce a generalization of the piecewise observing windows used in the channel case.

Due to the additional unboundedness in the $y$-direction, we introduce a family of moving quantiles $X_a(t)$ below.  This will allow us to detect concentration uniformly in the
horizontal variable. 

\begin{lemma}\label{lem:R2-Xa-basic}
Let $\om(t)$ be the solution of \eqref{eq:euler} with initial vorticity $\om_{in}$ such that
\[
    0\le \om_{in}\in L^1(\R^2)\cap L^\infty(\R^2),
    \qquad
    \om_{in}\not\equiv0,
    \qquad
    \int_{\R^2}|z|^2\om_{in}(z)\,dz<\infty.
\]
For every $t \ge0$ and every
\begin{equation}\label{eq:R2-range-of-a}
    - \|\om_{in}\|_{L^1}
    <
    a
    <
    \|\om_{in}\|_{L^1},
\end{equation}
there exists a unique $X_a(t)\in\R$ satisfying
\begin{equation}\label{eq:R2-definition-Xa}
    \int_{\R^2}
    H(x-X_a(t))\om(t,z)\,dz
    =
    a.
\end{equation}
Moreover, for any such $a$, the trajectory $t\mapsto X_a(t)$ is    locally Lipschitz and
\begin{equation}\label{eq:R2-Xa-derivative-identity}
    X_a'(t)
    =
    \frac{
        \int_{\R^2}
        H'(x-X_a(t))
        \bigl(y+u^x(t,z)\bigr)
        \om(t,z)\,dz
    }{
        \int_{\R^2}
        H'(x-X_a(t))
        \om(t,z)\,dz
    } \qquad \text{for a.e. $t\ge0$.}
\end{equation}
\end{lemma}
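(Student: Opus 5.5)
Fix $t$ and consider $F(t,X):=\int_{\R^2}H(x-X)\om(t,z)\,dz$. Since $\om(t)\ge0$ and $\|\om(t)\|_{L^1}=\|\om_{in}\|_{L^1}$, and $H$ is continuous, strictly increasing, with $H(\pm\infty)=\pm1$ by the normalization $\|H\|_{L^\infty}=1$, dominated convergence shows that $X\mapsto F(t,X)$ is continuous and tends to $\mp\|\om_{in}\|_{L^1}$ as $X\to\pm\infty$. Its derivative is
\[
\p_XF=-\int H'(x-X)\om\,dz<0,
\]
which is strictly negative because $H'>0$ everywhere and $\om(t)\not\equiv0$ (mass is conserved). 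The intermediate value theorem together with strict monotonicity then gives, for each $a$ in the range \eqref{eq:R2-range-of-a}, a unique root $X_a(t)$ of \eqref{eq:R2-definition-Xa}.

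For the time regularity, I will use the weak formulation of the Yudovich solution with the test function $H(x-X)$. This function is smooth with bounded derivatives, and its $x$-derivative $H'$ is integrable against $(y+u^x)\om$. The integrability holds because $|y|\om$ is integrable: the second moment $\int|z|^2\om(t)$ stays finite on bounded time intervals, since $\frac{d}{dt}\int|z|^2\om\lesssim \int|z|(|y|+|u|)\om$ and $u$ is bounded by \eqref{eq:R2-velocity-bound}, which gives a Gronwall bound. Consequently, for fixed $X$, $t\mapsto F(t,X)$ is locally Lipschitz with
\[
\p_tF(t,X)=\int H'(x-X)(y+u^x)\om\,dz,
\]
and this derivative is locally bounded uniformly in $X$. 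The same bound also controls $\p_XF$ continuously. So $F$ is locally Lipschitz jointly in $(t,X)$.

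Next I need a quantitative lower bound $|\p_XF|\ge c>0$ for $(t,X)$ in compact sets. I will get it by showing $X_a(t)$ stays in a bounded set on $[0,T]$: tightness of $\om(t)$ from the moment bound keeps $|F(t,X)|$ near $\|\om_{in}\|_{L^1}$ for $|X|$ large, uniformly in $t\in[0,T]$. Then, for $t\in[0,T]$ and $|X|\le L$, tightness gives
\[
\int H'(x-X)\om\ge c_{T,L}>0,
\]
since $H'$ is bounded below on compact sets.

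The main step is a Lipschitz implicit function argument, since $F$ is only Lipschitz in $t$ and not $C^1$. From
\[
0=F(t,X_a(t))-F(s,X_a(s))=\bigl[F(t,X_a(t))-F(t,X_a(s))\bigr]+\bigl[F(t,X_a(s))-F(s,X_a(s))\bigr],
\]
the first bracket is at least $c|X_a(t)-X_a(s)|$ in absolute value by the mean value theorem in $X$, and the second is $O(|t-s|)$. This yields local Lipschitz continuity of $X_a$. At a.e. $t$ where both $X_a$ and $F(\cdot,X_a(t))$ are differentiable, I will expand
\[
F(t+h,X_a(t+h))-F(t,X_a(t))=\p_tF\,h+\p_XF\,(X_a(t+h)-X_a(t))+o(h).
\]
This expansion uses continuity of $\p_tF(\cdot,\cdot)$ in $X$ (via continuity of $H'$ and dominated convergence) and continuity of $\p_XF$ in $t$ (via weak continuity of $\om$). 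Setting the left side to zero gives $\p_tF+\p_XF\,X_a'=0$, which is exactly \eqref{eq:R2-Xa-derivative-identity}. I expect justifying this chain rule, in particular the joint continuity of $\p_XF$ needed to make the error term $o(h)$, to be the main technical point. A fallback is to integrate the weak formulation in time and difference directly.
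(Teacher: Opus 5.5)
Your proposal is correct and follows essentially the same route as the paper. Existence and uniqueness come from strict monotonicity of $X\mapsto\int H(x-X)\om(t,z)\,dz$ together with its limits $\mp\|\om_{in}\|_{L^1}$. Regularity comes from an implicit-function argument for the locally Lipschitz map $(t,X)\mapsto\int H(x-X)\om(t,z)\,dz$, and the formula for $X_a'$ from differentiating the defining identity in time. Your tightness-based uniform lower bound on $\int H'(x-X)\om(t,z)\,dz$ and the elementary Lipschitz implicit-function step spell out details that the paper compresses into a citation of the implicit function theorem. The chain rule you flag as the main technical point is routine: both partial derivatives of the map are jointly continuous (for instance via the Lagrangian representation $\int H(\Phi_t^x(z)-X)\om_{in}(z)\,dz$), so the map is $C^1$.
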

\begin{proof}
For each fixed $t\ge0$, the map
\begin{equation}\label{eq:lem:R2-Xa-basic 1}
    X
    \longmapsto
    \int_{\R^2}H(x-X)\om(t,z)\,dz
\end{equation}
is continuous and strictly decreasing: indeed, $ \om \ge0$ and $H'>0$ imply  
\begin{equation}\label{eq sec5 implicit X}
    \frac{d}{dX}
    \int_{\R^2}H(x-X)\om(t,z)\,dz
    =
    -
    \int_{\R^2}H'(x-X)\om(t,z)\,dz
    <0.
\end{equation}
Moreover, using $\|\om(t)\|_{L^1}=\|\om_{in}\|_{L^1}$,
the dominated convergence theorem  gives
\[
\lim_{X\to-\infty}
    \int_{\R^2}H(x-X)\om(t,z)\,dz
    =
     \|\om_{in}\|_{L^1}
\]
and
\[
\lim_{X\to+\infty}
    \int_{\R^2}H(x-X)\om(t,z)\,dz
    =
    - \|\om_{in}\|_{L^1},
\]
where we have used $H( - \infty) =-1$ and $H(+\infty) = 1$ by  definition.

Hence, by the monotonicity of \eqref{eq:lem:R2-Xa-basic 1}, for every $- \|\om_{in}\|_{L^1}< a< \|\om_{in}\|_{L^1}$, there exists
a unique $X_a(t)$ such that     $\int_{\R^2}
    H(x-X_a(t))\om(t,z)\,dz
    =
    a.$

Since the map $(t,X) \longmapsto \int_{\R^2}H(x-X)\om(t,z)\,dz$ is locally Lipschitz continuous on $(t,X) \in [0,\infty)\times \R $,  the local Lipschitz regularity of $X_a(t)$ follows directly
from \eqref{eq sec5 implicit X} and the implicit function theorem.

Next, we prove the time derivative formula for $X'_a(t)$. Differentiating \eqref{eq:R2-definition-Xa} in time, we obtain
\[
\begin{aligned}
0
&=
\frac{d}{dt}
\int_{\R^2}
H(x-X_a(t))\om(t,z)\,dz
\\
&=
\int_{\R^2}
H'(x-X_a(t))
\bigl(
    y+u^x(t,z)-X_a'(t)
\bigr)
\om(t,z)\,dz,
\end{aligned}
\]
which gives \eqref{eq:R2-Xa-derivative-identity}.

\end{proof}

For each $a$ in the range \eqref{eq:R2-range-of-a}, there is a unique moving quantile $X_a(t)$. For such $a$ and $X_a(t)$ we define the functionals
\begin{equation}\label{eq:R2-definition-Ma}
    \mathcal M_a(t)
    :=
    \int_{\R^2}
    yH(x-X_a(t))\om(t,z)\,dz,
\end{equation}
and
\begin{equation}\label{eq:R2-definition-Da}
    \mathcal D_a(t)
    :=
    \int_{\R^2}
    \bigl(y-X_a'(t)\bigr)^2
    H'(x-X_a(t))
    \om(t,z)\,dz.
\end{equation}

\subsection{Lyapunov estimates of moving functionals}

We first record the static estimate  that will be used in the monotonicity argument below.

Note that the result below has no time dependence and $\om$ is not necessarily a solution.

\begin{lemma}[Static absorption estimate]
\label{lem:static-absorption-global}
There exists a universal constant $C>0$ such that for any nonnegative function $\om\in L^1(\R^2)\cap L^\infty(\R^2)$ satisfying
$$
\int |z|^2 \om(z)\,dz <\infty
$$
and every $X,v\in\R$, the following holds
\begin{equation}
\begin{aligned}
&\left|
\int_{\R^2}
(y-v)H'(x-X)u^x(z)\om(z)\,dz
\right|
\\
&\le
C\bigl(
\|\om\|_{L^1}+\|\om\|_{L^\infty}
\bigr)
\int_{\R^2}
(y-v)^2H'(x-X)\om(z)\,dz.
\end{aligned}
\end{equation}
\end{lemma}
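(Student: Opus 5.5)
The plan is to follow Steps 2--4 in the proof of Proposition~\ref{prop:one-segment}, now with the planar kernel $K^x(z,z')=-\frac{1}{2\pi}\frac{y-y'}{|z-z'|^2}$ and the power weight \eqref{eq: R2 def H}. Writing $u^x=\int K^x(z,z')\om(z')\,dz'$, I bound the left-hand side by
\[
\iint |y-v|\,H'(x-X)\,|K^x(z,z')|\,\om(z)\om(z')\,dz\,dz' .
\]
I then split $\R^2\times\R^2$ into $A=\{|y'-v|\le|y-v|\}$ and $B=\{|y-v|<|y'-v|\}$. The goal in each region is to produce the weight $(y-v)^2H'(x-X)$, evaluated at $z$ or at $z'$, so that the term is bounded by $\mathcal D$.

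\textbf{Region $A$.} Here I integrate in $z'$ first, using only $\|\om\|_{L^\infty}$. For fixed $y'$, the exact identity
\[
\int_\R \frac{|y-y'|}{(x-x')^2+(y-y')^2}\,dx'=\pi
\]
holds. Since the set of admissible $y'$ has length at most $2|y-v|$, this gives
\[
\int_{|y'-v|\le|y-v|}|K^x(z,z')|\,\om(z')\,dz'\le \|\om\|_{L^\infty}\,|y-v| .
\]
Multiplying by $|y-v|H'(x-X)\om(z)$ and integrating in $z$ gives the bound $\|\om\|_{L^\infty}\int (y-v)^2H'(x-X)\om$. This matches \eqref{eq:proof_prop:one-segment_3} exactly.

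\textbf{Region $B$.} This is the main obstacle. In the channel, \eqref{eq change x of Kx} let us transfer the weight $H'(x-X)$ to $H'(x'-X)$ because of exponential screening. In $\R^2$ that transfer fails: the averaged kernel $\int H'(x-X)\frac{h}{(x-x')^2+h^2}\,dx$, with $h=|y-y'|$, behaves like
\[
\langle x'-X\rangle^{-3/2}+\frac{h}{h^2+(x'-X)^2}.
\]
The second term is not controlled by $H'(x'-X)$ when $|x'-X|$ is large. I therefore split $B$ further according to whether $|z-z'|\le \tfrac12\langle x'-X\rangle$ or not.

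In the near part, $\langle x-X\rangle\approx\langle x'-X\rangle$, so $H'(x-X)\approx H'(x'-X)$. I then integrate in $z$ using $\|\om\|_{L^\infty}$ and the same $\pi$-identity, now over a $y$-range of length at most $2|y'-v|$, and bound $|y-v|<|y'-v|$. This gives $\|\om\|_{L^\infty}(y'-v)^2H'(x'-X)$, and integrating in $z'$ yields $\mathcal D$.

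In the far part, $|z-z'|\gtrsim\langle x'-X\rangle$, and this is where $\|\om\|_{L^1}$ enters. I would use
\[
|y-v|\,|K^x|\le \frac{|y-v|\,|y-y'|}{2\pi|z-z'|^2}\lesssim \frac{|y'-v|^2}{|z-z'|^2},
\]
using $|y-y'|\le 2|y'-v|$ and $|y-v|<|y'-v|$ in $B$. Integrating in $z$ against $H'(x-X)\om(z)$ then gives
\[
|y'-v|^2\,\|\om\|_{L^1}\,\|H'\|_{L^\infty}\,\langle x'-X\rangle^{-2}\lesssim \|\om\|_{L^1}\,|y'-v|^2H'(x'-X),
\]
since $\langle\cdot\rangle^{-2}\le\langle\cdot\rangle^{-3/2}$. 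This is exactly where the exponent $3/2<2$ is used. If the pointwise bound $|y-y'|\le 2|y'-v|$ turns out to be too lossy in some sub-case, I would first try a symmetric Cauchy--Schwarz: write $|y-v|H'(x-X)\le |y-v|H'(x-X)^{1/2}\cdot H'(x-X)^{1/2}$ and put the two factors on $z$ and $z'$ respectively.

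Summing the contributions of $A$ and both parts of $B$ gives the stated estimate, with $C$ depending only on $c_H$. The finiteness of the second moment is used only to guarantee that all integrals converge.
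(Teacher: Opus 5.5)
Your proposal is correct and is essentially the paper's proof. The paper makes the same first split by $|y'-v|\le|y-v|$ and integrates in $z'$ with $\|\om\|_{L^\infty}$ there. On the complementary region it uses the same dichotomy: a part where $H'(x-X)\lesssim H'(x'-X)$, handled by integrating in $z$ with $\|\om\|_{L^\infty}$, and a far part bounded by $\|\om\|_{L^1}|y'-v|^2\langle x'-X\rangle^{-2}\lesssim\|\om\|_{L^1}|y'-v|^2H'(x'-X)$, which is where $3/2<2$ enters. The only difference is bookkeeping: you separate near and far by $|z-z'|\lessgtr\tfrac12\langle x'-X\rangle$, whereas the paper (after normalizing $X=v=0$) uses $\{|x'|\le2\}\cup\{|x|\ge|x'|/2\}$ versus $\{|x'|>2,\ |x|<|x'|/2\}$.
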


\begin{proof}
This is a purely static estimate. By a change of variables and the translation invariance of the planar Biot--Savart kernel, it suffices to consider $X=v=0$. So it remains to estimate
$$
    \iint_{\mathbb R^2\times\mathbb R^2}
    |y|H'(x)
    \frac{|y-y'|}{(x-x')^2+(y-y')^2}
    \om(z)\om(z')\,dzdz' .
$$
We split the domain into three regions
$
    \mathbb R^2\times\mathbb R^2
    =
    \mathcal R_1\cup\mathcal R_2\cup\mathcal R_3,
$
where
$$
    \mathcal R_1
    :=
    \{(z,z'):\ |y'|\le |y|\},
$$
$$
    \mathcal R_2
    :=
    \{(z,z'):\ |y|<|y'|\}
    \cap
    \Big(
        \{|x'|\le2\}\cup\{|x|\ge |x'|/2\}
    \Big),
$$
and
$$
    \mathcal R_3
    :=
    \{(z,z'):\ |y|<|y'|\}
    \cap
    \{|x'|>2\}
    \cap
    \{|x|<|x'|/2\}.
$$
\vspace{0.5em}
\noindent
\textbf{On $\mathcal R_1$:} 

Since $\om \ge0$, we integrate in $dz'$ first to obtain
\begin{equation}\label{eq N1 R2}
\begin{aligned}
&\iint_{|y'|\le |y|}
    |y|H'(x)
    \frac{|y-y'|}{(x-x')^2+(y-y')^2}
    \om(z)\om(z')\,dzdz'  \\
&\qquad\lesssim
    \|\om\|_{L^\infty}
    \int_{\mathbb R^2}
    |y|H'(x)\om(z)
    \int_{|y'|\le |y|}
    \int_{\mathbb R}
    \frac{|y-y'|}{(x-x')^2+(y-y')^2}
    \,dx'dy'\,dz  \\
&\qquad\lesssim
    \|\om\|_{L^\infty}
    \int_{\mathbb R^2}y^2H'(x)\om(z)\,dz.
\end{aligned}
\end{equation}

\vspace{0.5em}
\noindent
\textbf{On $\mathcal R_2$:} Since $H'(\xi) \approx \langle \xi \rangle^{-3/2}$, in this region we have
$
    H'(x)\lesssim H'(x').
$
It follows that
\begin{equation}\label{eq N2 R2}
\begin{aligned}
&\iint_{\mathcal R_2}
    |y|H'(x)
    \frac{|y-y'|}{(x-x')^2+(y-y')^2}
    \om(z)\om(z')\,dzdz'  \\
&\qquad\lesssim
    \|\om\|_{L^\infty}
    \int_{\mathbb R^2}H'(x')\om(z')
    \int_{|y|<|y'|}
    \int_{\mathbb R}
    |y|
    \frac{|y-y'|}{(x-x')^2+(y-y')^2}
    \,dxdy\,dz'  \\
&\qquad\lesssim
    \|\om\|_{L^\infty}
    \int_{\mathbb R^2}y'^2H'(x')\om(z')\,dz'.
\end{aligned}
\end{equation}

\vspace{0.5em}
\noindent
\textbf{On $\mathcal R_3$:}
Here
$|x-x'|\ge \frac{|x'|}{2}$ and $|y-y'|\le 2|y'|$, which gives
$$
    |y|
    \frac{|y-y'|}{(x-x')^2+(y-y')^2}
    \lesssim
    \frac{y'^2}{|x'|^2}.
$$
Therefore,
\begin{equation}\label{eq N3 R2}
\begin{aligned}
&\iint_{\mathcal R_3}
    |y|H'(x)
    \frac{|y-y'|}{(x-x')^2+(y-y')^2}
    \om(z)\om(z')\,dzdz'  \\
&\qquad\lesssim
    \left(
        \int_{\mathbb R^2}H'(x)\om(z)\,dz
    \right)
    \left(
        \int_{|x'|>2}
        \frac{y'^2}{|x'|^2}\om(z')\,dz'
    \right).
\end{aligned}
\end{equation}
Since $H'$ is bounded, the first factor in \eqref{eq N3 R2} can be bounded by
\begin{equation}\label{eq N4 R2}
    \int_{\mathbb R^2}H'(x)\om(z)\,dz
    \lesssim
    \|\om\|_{L^1}.
\end{equation}
Moreover, for $|x'|>2$ we have $|x'|^{-2}\lesssim H'(x'),$ which gives the bound for the second factor in \eqref{eq N3 R2}
\begin{equation}\label{eq N5 R2}
   \int_{|x'|>2}
        \frac{y'^2}{|x'|^2}\om(z')\,dz'
    \lesssim
 \int_{\R^2} y'^2H'(x')\om(z')\,dz'.
\end{equation}
Combining \eqref{eq N1 R2}, \eqref{eq N2 R2}, \eqref{eq N3 R2}, \eqref{eq N4 R2} and \eqref{eq N5 R2} yields the desired estimate.
\end{proof}

For any $X\in\R$, we also define
\begin{equation}\label{eq:R2-def-NyX}
\mathcal N_X^y[\om]
:=
\frac1{4\pi}
\iint_{\R^2\times\R^2}
\frac{
    \bigl(H(x-X)-H(x'-X)\bigr)(x-x')
}{
    |z-z'|^2
}
\om(z)\om(z')\,dzdz'.
\end{equation}
Since $H$ is increasing, we see that $\mathcal N_X^y[\om]\ge0$  provided $\om\ge0$.

\begin{lemma}[Lyapunov estimate for the family of moving quantiles]
\label{lem:R2-Xa-Morawetz}
There exists a universal constant $\ep_0>0$ such that if
\begin{equation}\label{eq:R2-smallness-for-Xa}
    \|\om_{in}\|_{L^1(\R^2)}
    +
    \|\om_{in}\|_{L^\infty(\R^2)}
    \le \ep_0 \quad \text{and}\quad \int_{\R^2}|z|^2 \om_{in}\,dz<\infty,
\end{equation}
then for every $a$ satisfying \eqref{eq:R2-range-of-a} and a.e. $t\ge0$,   
\begin{equation}\label{eq:R2-Ma-monotonicity}
    \frac{d}{dt}\mathcal M_a(t)
    \ge
    \frac12\mathcal D_a(t)+\mathcal N_{X_a(t)}^y[\om(t)]  .
\end{equation}

\end{lemma}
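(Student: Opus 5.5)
We differentiate $\mathcal M_a(t)=\int yH(x-X_a(t))\om(t,z)\,dz$ along the flow, using the weak formulation of the transport equation. The formulation applies because $\int|z|^2\om<\infty$ is propagated by the Yudovich flow (the velocity is bounded and the drift $y$ grows linearly), and because $X_a$ is locally Lipschitz by Lemma~\ref{lem:R2-Xa-basic}. The transport term $y\p_x$ and the term from $\dot X_a$ together produce $\int y(y-X_a')H'(x-X_a)\om$. The nonlinear part produces $\int yH'(x-X_a)u^x\om+\int H(x-X_a)u^y\om$. Writing this schematically,
\[
\frac{d}{dt}\mathcal M_a=\int y\bigl(y+u^x-X_a'\bigr)H'(x-X_a)\om\,dz+\int H(x-X_a)u^y\om\,dz .
\]

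The key algebraic step uses the quantile condition. By \eqref{eq:R2-Xa-derivative-identity}, $\int (y+u^x-X_a')H'(x-X_a)\om=0$. Multiplying this identity by $X_a'$ and subtracting lets us replace the prefactor $y$ by $y-X_a'$:
\[
\frac{d}{dt}\mathcal M_a=\mathcal D_a+\int (y-X_a')H'(x-X_a)u^x\om\,dz+\int H(x-X_a)u^y\om\,dz .
\]
This is exactly the point where the quantile choice kills the $-X_a''\int H\om$ term mentioned in the introduction. In practice we never differentiate $X_a'$, because the cancellation happens at the level of the identity above.

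For the $u^y$ term, we insert the planar Biot--Savart kernel $K^y=\frac{1}{2\pi}\frac{x-x'}{|z-z'|^2}$, which is antisymmetric. Symmetrizing in $(z,z')$ gives exactly $\mathcal N^y_{X_a}[\om]$ from \eqref{eq:R2-def-NyX}, which is the $\frac1{4\pi}$ prefactor. The double integral is absolutely convergent because the kernel is bounded by $\frac{|H(x)-H(x')|}{|x-x'|}\le\|H'\|_\infty$.

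For the $u^x$ term, we apply Lemma~\ref{lem:static-absorption-global} at each fixed time with $X=X_a(t)$ and $v=X_a'(t)$. This bounds its absolute value by $C(\|\om\|_{L^1}+\|\om\|_{L^\infty})\mathcal D_a$. Conservation of the $L^1$ and $L^\infty$ norms makes the constant at most $C\ep_0$, so choosing $C\ep_0\le\frac12$ yields \eqref{eq:R2-Ma-monotonicity}.

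The main obstacle is justifying the differentiation for merely Yudovich vorticity with the unbounded weight $y$. Two things need care. First, $\mathcal M_a$ must be absolutely continuous, which we get from the weak formulation together with the moment bound and the Lipschitz bound on $X_a$. Second, the a.e.\ identity for $X_a'$ must be substituted at the same times where $\mathcal M_a$ is differentiable. We handle both by testing against $yH(x-X)$ for fixed $X$ and then composing with the Lipschitz path via the chain rule. The moment bound $\int y^2\om<\infty$ ensures that $\mathcal D_a$ and all the integrals above are finite.
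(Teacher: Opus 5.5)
Your proposal is correct and follows the paper's proof essentially line by line. You differentiate $\mathcal M_a$, use the quantile identity \eqref{eq:R2-Xa-cancellation} multiplied by $X_a'(t)$ to turn the prefactor $y$ into $y-X_a'(t)$, identify the $u^y$ term with $\mathcal N^y_{X_a(t)}[\om(t)]$ by symmetrization, and absorb the $u^x$ term via Lemma~\ref{lem:static-absorption-global} with $X=X_a(t)$, $v=X_a'(t)$ and $\ep_0$ small; your extra remarks on justifying the a.e.\ differentiation go slightly beyond what the paper writes but are consistent with it.
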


\begin{proof}
Differentiating \eqref{eq:R2-definition-Ma} gives
\begin{equation}\label{eq:R2-Ma-first-identity}
\begin{aligned}
\frac{d}{dt}\mathcal M_a(t)
&=
\int_{\R^2}
yH'(x-X_a(t))
\left(
    y+u^x(t,z)-X_a'(t)
\right)
\om(t,z)\,dz \\
&\quad
+
\int_{\R^2}
H(x-X_a(t))
u^y(t,z)\om(t,z)\,dz.
\end{aligned}
\end{equation}

Using \eqref{eq:R2-Xa-derivative-identity} from Lemma \ref{lem:R2-Xa-basic}, we see that
\begin{equation}\label{eq:R2-Xa-cancellation}
    \int_{\R^2}
    H'(x-X_a(t))
    \bigl(
        y+u^x(t,z)-X_a'(t)
    \bigr)
    \om(t,z)\,dz
    =
    0.
\end{equation}
Multiplying the above by $X'_a(t)$ and inserting it in \eqref{eq:R2-Ma-first-identity} give
\begin{equation}\label{eq:R2-Ma-second-identity}\begin{aligned}
\frac{d}{dt}\mathcal M_a(t)&=
\mathcal D_a(t)
+
\int_{\R^2}
\bigl(y-X_a'(t)\bigr)
H'(x-X_a(t))
u^x(t,z)
\om(t,z)\,dz
\\
&+
\int_{\R^2}
H(x-X_a(t))
u^y(t,z)\om(t,z)\,dz.
\end{aligned}
\end{equation}

Since $H$ is increasing and $\om(t)\ge0$, by symmetrization (exactly as in the proof of Proposition \ref{prop:one-segment}) we have
\begin{align}
\int_{\R^2}
H(x-X_a(t))
u^y(t,z)\om(t,z)\,dz
=
\mathcal N_{X_a(t)}^y[\om(t)] \ge 0.
\end{align}

On the other hand, Lemma~\ref{lem:static-absorption-global} applied
with $X=X_a(t)$ and $v=X_a'(t)$ gives
\begin{equation}\label{eq:R2-Ma-Nx-bound}\begin{aligned}
&\left|
\int_{\R^2}
\bigl(y-X_a'(t)\bigr)
H'(x-X_a(t))
u^x(t,z)\om(t,z)\,dz
\right|\\
&\le
C
\left(
    \|\om(t)\|_{L^1}
    +
    \|\om(t)\|_{L^\infty}
\right)
\mathcal D_a(t)   .
\end{aligned}
\end{equation}
Combining this with \eqref{eq:R2-Ma-second-identity}, we obtain the desired conclusion provided $\ep_0>0$ is sufficiently small.
\end{proof}

Next, we derive a logarithmic control for the vertical moment. Recall that for positive initial data $\om_{in} \in L^1\cap L^\infty$ with $\int |z|^2 \om_{in} <\infty$, the Hamiltonian
\begin{equation}\label{eq def of hami on R2 recall}
\mathcal H[\om(t)]
= - \frac{1}{4\pi}\iint_{\R^2\times\R^2}\log|z-z'|\om(t,z)\om(t,z')\,dz\,dz'
+\frac12\int_{\R^2}y^2\om(t,z)\,dz 
\end{equation}
is finite and conserved along the flow.

\begin{lemma}[Logarithmic vertical moment bound]
\label{lem:R2-logarithmic-y2-upper}
Let $\om(t)$ be the solution of \eqref{eq:euler} with initial vorticity $\om_{in}$ such that
\[
    0\le \om_{in}\in L^1(\R^2)\cap L^\infty(\R^2) \quad \text{and}\quad \int_{\R^2}|z|^2\om_{in}(z)\,dz<\infty.
\]
Then  there exists a universal constant $C>0$ such that for every
$t\ge0$,
\begin{equation}\label{eq sec5 upper bound for y2w}
\begin{aligned}
\int_{\R^2}y^2\om(t,z)\,dz
&\le C\,C_{in}^3 \Big( 1+ 
\log\langle t\rangle \Big),
\end{aligned}
\end{equation}
where
\begin{equation}\label{eq sec5 def of cin}
C_{in}=1+\int_{\R^2} |z|^2 \om_{in}(z)\,dz+\|\om_{in}\|_{L^1}+\|\om_{in}\|_{L^\infty}.
\end{equation}
\end{lemma}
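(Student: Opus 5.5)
Write $m=\|\om_{in}\|_{L^1}$, which is conserved. The plan is to use conservation of the Hamiltonian \eqref{eq def of hami on R2 recall},
\[
\frac12\int y^2\om(t)\,dz=\mathcal H[\om_{in}]+\frac1{4\pi}\iint \log|z-z'|\,\om(t,z)\om(t,z')\,dzdz',
\]
and to bound the logarithmic interaction energy from above by $\log$ of the spatial spread of $\om(t)$. The spread will be controlled by a second-moment estimate that grows at most polynomially in $t$.

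\textbf{Step 1: bound $\mathcal H[\om_{in}]$.} Split the logarithm as $\log|z-z'|\le \log_+|z-z'|$, and bound the singular part below via $\log|z-z'|\ge -|\log|z-z'||\mathbf 1_{|z-z'|<1}$. This gives
\[
-\iint\log|z-z'|\,\om_{in}\om_{in}\le \frac{\pi}{2}\|\om_{in}\|_{L^1}\|\om_{in}\|_{L^\infty}.
\]
Together with $\int y^2\om_{in}\le C_{in}$, this yields $\mathcal H[\om_{in}]\lesssim C_{in}^2$.

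\textbf{Step 2: bound the interaction energy at time $t$.} Use $\log|z-z'|\le \frac12\log(1+|z-z'|^2)\le \frac12\log\bigl(1+2|z|^2+2|z'|^2\bigr)$. Jensen's inequality (concavity of $\log$) applied with the probability measure $\om\otimes\om/m^2$ then gives
\[
\iint\log|z-z'|\,\om\om\le \frac{m^2}{2}\log\Bigl(1+\frac{4}{m}\int|z|^2\om(t)\,dz\Bigr).
\]
Thus it suffices to show
\[
\frac1m\int|z|^2\om(t)\lesssim C_{in}^{k}\langle t\rangle^{k'},
\]
since then $m^2\log(\cdot)\lesssim C_{in}^2\bigl(\log C_{in}+\log\langle t\rangle\bigr)\lesssim C_{in}^3(1+\log\langle t\rangle)$.

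\textbf{Step 3: polynomial growth of the second moments.} This is the main obstacle, because it must be done in a bootstrap together with the $y^2$ moment. Set $A(t)=\int y^2\om$ and $B(t)=\int x^2\om$.
\begin{itemize}
\item The vertical moment satisfies $A'=2\int y u^y\om$. By antisymmetry of $K^y$ this equals $\iint (y-y')K^y\om\om$, which is $\frac{1}{2\pi}\iint\frac{(y-y')(x-x')}{|z-z'|^2}\om\om$, bounded by $\frac{m^2}{4\pi}$. Hence $A(t)\le A(0)+Cm^2 t$.
\item The horizontal moment satisfies $B'=2\int x(y+u^x)\om$. Symmetrizing the nonlinear part similarly gives $|2\int xu^x\om|\le Cm^2$. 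Cauchy–Schwarz gives $2\int xy\om\le 2\sqrt{AB}$. So $\frac{d}{dt}\sqrt B\lesssim \sqrt A+m^2/\sqrt B$, which implies $B(t)\lesssim C_{in}^{2}\langle t\rangle^{4}$.
\end{itemize}
These symmetrizations are justified for Yudovich solutions with finite second moments by the standard cutoff/weak-formulation argument.

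\textbf{Step 4: conclusion.} Insert the bounds of Step 3 into Step 2. Since $m\le C_{in}$, the quantity $\log(1+4(A+B)/m)$ is at most $\log(1+C C_{in}^2\langle t\rangle^4/m)$.

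The possible smallness of $m$ in the denominator is harmless. Apply the Jensen bound only when $m$ is not small; otherwise use the crude estimate $m^2\log(1+X/m)\le m^2\log(1+X)+m^2\log(1+1/m)$, noting that $m^2\log(1+1/m)\lesssim m\le C_{in}$.

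Adding the bound on $\mathcal H[\om_{in}]$ from Step 1 yields \eqref{eq sec5 upper bound for y2w}.
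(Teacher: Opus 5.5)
Your proof is correct. It shares the paper's skeleton: conservation of $\mathcal H$, the bound $\mathcal H[\om_{in}]\lesssim C_{in}^2$ from the short-range part of the logarithm, and an upper bound on $\iint\log|z-z'|\,\om(t,z)\om(t,z')\,dzdz'$ by the logarithm of a polynomially growing spread. Where you differ is in how you control that spread.

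The paper uses the pointwise bound $\log|z-z'|\lesssim 1+\log\langle z\rangle+\log\langle z'\rangle$. It then pulls $\int\log\langle z\rangle\,\om(t,z)\,dz$ back to the initial data along the flow map, using only $\|u\|_{L^\infty}\lesssim C_{in}$ to get $|\Phi_t(z)|\lesssim C_{in}\langle z\rangle\langle t\rangle^2$. This is purely Lagrangian: it differentiates no moments and never divides by $m$.

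You instead apply Jensen to the probability measure $\om\otimes\om/m^2$ to reduce to $\int|z|^2\om(t,z)\,dz$. You then bound this through Eulerian moment identities, where the nonlinear terms symmetrize into the bounded kernel $\frac{(x-x')(y-y')}{|z-z'|^2}$, so only $m^2$ enters rather than $\|u\|_{L^\infty}$. The price is twofold:
\begin{itemize}
\item the cutoff justification of the moment ODEs, which you flag and which the paper also relies on, e.g.\ in the proof of Theorem~\ref{thm:R2-positive-sharp-moment-growth};
\item the $1/m$ inside the logarithm, which you handle correctly via $1+X/m\le(1+X)(1+1/m)$ and $m^2\log(1+1/m)\le m$.
\end{itemize}

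There are two harmless slips. First, $\frac{d}{dt}\sqrt B\lesssim\sqrt A+m^2/\sqrt B$ is better stated for $\sqrt{1+B}$, giving $\frac{d}{dt}\sqrt{1+B}\le\sqrt A+Cm^2$. Integrating this gives something like $B\lesssim C_{in}^4\langle t\rangle^3$ rather than $C_{in}^2\langle t\rangle^4$. Since this only enters through a logarithm and $\log C_{in}\le C_{in}$, the final $C_{in}^3(1+\log\langle t\rangle)$ is unaffected. Second, no bootstrap is actually needed: your bound $A(t)\le A(0)+Cm^2t$ does not use the Hamiltonian at all.
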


\begin{proof}
By the Hamiltonian conservation law \eqref{eq def of hami on R2 recall},
\begin{equation}\label{eq sec5 upper bound for y2w 2}
\begin{aligned}
\int_{\R^2}y^2\om(t,z)\,dz
&=
\int_{\R^2}y^2\om_{in}(z)\,dz
+
\frac1{2\pi}
\iint_{\R^2\times\R^2}
\log|z-z'|
\om(t,z)\om(t,z')\,dzdz'
\\
&+
\frac1{2\pi}
\iint_{\R^2\times\R^2}
\log\frac{1}{|z-z'|}
\om_{in}(z)\om_{in}(z')\,dzdz'.
\end{aligned}
\end{equation}

Since $\om_{in}\ge0$, the last term in \eqref{eq sec5 upper bound for y2w 2} can be controlled by 
\begin{equation}\label{eq sec5 cin1}
\begin{aligned}
 &\iint_{\R^2\times\R^2}
\log\frac{1}{|z-z'|}
\om_{in}(z)\om_{in}(z')\,dzdz' \\
& \le \iint_{|z-z'| \le 1}
\log\frac{1}{|z-z'|}
\om_{in}(z)\om_{in}(z')\,dzdz' \\
&\le C \iint_{|z-z'|\le 1}
\frac{\om_{in}(z)\om_{in}(z')}{|z-z'|}
\,dzdz' \lesssim
C_{in}^2.
\end{aligned}
\end{equation}
Therefore, it remains to estimate the middle term in \eqref{eq sec5 upper bound for y2w 2}.

Since 
\(
    \log|z-z'|
    \lesssim
    1+\log\langle z\rangle+\log\langle z'\rangle
\)
and $\om(t)\ge0$, we obtain
\begin{equation}\label{eq sec5 cin2}\begin{aligned}
&\iint_{\R^2\times\R^2}
\log|z-z'|
\om(t,z)\om(t,z')\,dzdz'
\\
&\lesssim
C_{in}^2
+
C_{in}
\int_{\R^2}
\log\langle z\rangle\om(t,z)\,dz.
\end{aligned}
\end{equation}

To establish \eqref{eq sec5 upper bound for y2w}, we just need to prove a log upper bound for the last integral. To this end, let $\Phi_t=(\Phi_t^x,\Phi_t^y)$ be the flow map associated with the velocity $(y,0)+u$, namely
\begin{equation}
\begin{cases}
\displaystyle
\frac{d}{dt}\Phi_t(z)
=
(\Phi_t^y(z),0)+u(t,\Phi_t(z)), &\\
\Phi_t(z)|_{t=0}=z. &
\end{cases}
\end{equation}
Recall that
\(
    \|u(t)\|_{L^\infty}
    \lesssim
    \|\om_{in}\|_{L^1}
    +
    \|\om_{in}\|_{L^\infty}.
\)
Integrating in time yields
\[
    |\Phi_t^y(z)|
    \lesssim
    |y|
    +
    C_{in}t
\]
and
\[
\begin{aligned}
    |\Phi_t^x(z)|
    \lesssim
    |x|+C_{in}t
    +
    t|y|
    +
    C_{in}t^2.
\end{aligned}
\]
Consequently,
\begin{equation}\label{eq:R2-flow-log-upper}
\begin{aligned}
\log\langle\Phi_t(z)\rangle
\lesssim
C_{in}
+
\log\langle z\rangle
+
\log\langle t\rangle
\end{aligned}
\end{equation}

Since the vorticity is transported by the area-preserving flow,
\eqref{eq:R2-flow-log-upper} gives
\begin{equation}\label{eq sec5 cin3}\begin{aligned}
\int_{\R^2}
\log\langle z\rangle\om(t,z)\,dz
&=
\int_{\R^2}
\log\langle\Phi_t(z)\rangle
\om_{in}(z)\,dz
\lesssim C_{in}^2 
\left( 1+
\log\langle t\rangle 
\right),
\end{aligned}
\end{equation}
which, combining with \eqref{eq sec5 upper bound for y2w 2}, \eqref{eq sec5 def of cin}, \eqref{eq sec5 cin1}, \eqref{eq sec5 cin2} and \eqref{eq sec5 cin3}, completes the proof.
 
\end{proof}

\subsection{Density-one evacuation and damping}

We now combine the family of moving quantiles $X_a(t)$ constructed above with the
Hamiltonian conservation law to prove the damping statements in Theorem \ref{thm:intro-R2-positive-damping}. The main point is that any horizontal
concentration is detected simultaneously by a positive-measure family
of moving quantiles.

\begin{theorem}[Density-one evacuation and damping]
\label{thm:R2-density-one-global-damping}
There exists a universal constant $\ep_0>0$ such that if
\[
    0\le \om_{in}\in L^1(\R^2)\cap L^\infty(\R^2),
    \qquad
    \|\om_{in}\|_{L^1}
    +
    \|\om_{in}\|_{L^\infty}
    \le \ep_0,
\]
and
\[
    \int_{\R^2}|z|^2\om_{in}(z)\,dz<\infty,
\]
then there exists a measurable set
$\mathcal T\subset[0,\infty)$ of asymptotic density one,
\begin{equation}\label{eq:R2-density-one-set}
    \lim_{T\to\infty}
    \frac{|\mathcal T\cap[0,T]|}{T}
    =
    1
\end{equation}
such that the corresponding solution $\om(t)$ satisfies
\begin{equation}\label{eq:R2-density-one-strip-evacuation}
  \lim_{t \in \mathcal T, t\to \infty}  \sup_{x_0\in\R}
    \int_{|x-x_0|\le1}
    \om(t,z)\,dz
    =0.
\end{equation}
Consequently, for any $2<p\le\infty$,
\begin{equation}\label{eq:R2-density-one-velocity-damping}
    \lim_{t \in \mathcal T, t\to \infty}\|u(t)\|_{L^p(\R^2)}
    =0.
\end{equation}
\end{theorem}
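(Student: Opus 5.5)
The plan is to integrate the Lyapunov estimate of Lemma \ref{lem:R2-Xa-Morawetz} over both time and the quantile label $a$. Concentration in a unit window is then shown to force a definite amount of the resulting dissipation. Set $m:=\|\om_{in}\|_{L^1}$ and $A:=(-m,m)$. For each $a\in A$, integrating \eqref{eq:R2-Ma-monotonicity} on $[0,T]$ gives
\[
\int_0^T\Bigl(\tfrac12\mathcal D_a(t)+\mathcal N^y_{X_a(t)}[\om(t)]\Bigr)dt\le \mathcal M_a(T)-\mathcal M_a(0).
\]
Since $|H|\le1$ and $\om\ge0$, we have $|\mathcal M_a(t)|\le \int|y|\om(t)\le m^{1/2}\bigl(\int y^2\om(t)\bigr)^{1/2}$. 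By Lemma \ref{lem:R2-logarithmic-y2-upper} this is $\lesssim_{in}(1+\log\langle T\rangle)^{1/2}$. Integrating over $a\in A$ (Fubini is justified by the Lipschitz regularity of $X_a$ and measurability in $a$) then yields the sublinear budget
\[
\int_0^T\int_A \Bigl(\mathcal D_a(t)+\mathcal N^y_{X_a(t)}[\om(t)]\Bigr)\,da\,dt\lesssim_{in}(\log\langle T\rangle)^{1/2}=o(T).
\]

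Next I will show that concentration forces a lower bound on $F(t):=\int_A(\mathcal D_a+\mathcal N^y_{X_a})\,da$. Suppose $\int_{|x-x_0|\le1}\om(t)\ge\delta$ for some $x_0$. The map $X\mapsto \int H(x-X)\om(t)$ is decreasing, and its derivative $-\int H'(x-X)\om$ has magnitude $\gtrsim\delta$ for $|X-x_0|\le1$. So the labels $a$ with $X_a(t)\in[x_0-1,x_0+1]$ form an interval $J$ of measure $\gtrsim\delta$; moreover, each has $\int_{|x-X_a|\le 2}\om\ge\delta$. For such $a$, with $v=X_a'(t)$, the volume argument of \eqref{eq:rho-choice} gives $\int_{|x-X_a|\le2,\,|y-v|\ge\rho}\om\ge\delta/2$ with $\rho\approx\delta/\|\om_{in}\|_{L^\infty}$. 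Hence $\mathcal D_a(t)\gtrsim \rho^2\delta$ and $F(t)\ge c_\delta>0$.

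The key observation is that this lower bound is pointwise in $t$ and needs no continuity argument, since $v$ is arbitrary. This is where the $\R^2$ argument is simpler than the channel case. I expect the main obstacle to be exactly this point: the lower bound on $\mathcal D_a$ must be uniform in the unknown speed $X_a'(t)$, which may be large. The volume bound handles this, because a thin horizontal strip around any $y=v$ carries little mass.

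Given this, let $B_\delta:=\{t:\sup_{x_0}\int_{|x-x_0|\le1}\om(t)\ge\delta\}$. Chebyshev's inequality gives $|B_\delta\cap[0,T]|\le c_\delta^{-1}\int_0^TF\lesssim_\delta(\log\langle T\rangle)^{1/2}$, so each $B_\delta$ has density zero. A standard diagonal construction then produces the set $\mathcal T$. Choose $\delta_k=2^{-k}$ and times $T_k\uparrow\infty$ so that $|B_{\delta_k}\cap[0,T]|\le 2^{-k}T$ for $T\ge T_k$. Define $\mathcal T:=[0,\infty)\setminus\bigcup_k (B_{\delta_k}\cap[T_k,\infty))$; this set has density one and gives \eqref{eq:R2-density-one-strip-evacuation}. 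Finally, horizontal-strip evacuation implies ball evacuation, so Proposition \ref{prop:damping-circulation-equivalence}, applied along sequences in $\mathcal T$, gives \eqref{eq:R2-density-one-velocity-damping}.
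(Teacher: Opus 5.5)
Your proposal is correct and follows the paper's proof essentially step for step. The shared steps are: the time-and-label integrated Lyapunov budget, made sublinear by the logarithmic vertical moment bound; the positive-measure family of quantiles with $X_a(t)\in[x_0-1,x_0+1]$, whose dissipation lower bound is uniform in the unknown speed $X_a'(t)$ by the thin-strip volume argument; Chebyshev's inequality, giving density-zero bad sets; a diagonal construction of $\mathcal T$; and Proposition \ref{prop:damping-circulation-equivalence}. The only differences are cosmetic: you keep the nonnegative $\mathcal N^y$ term, you bound $|J|$ through the derivative of the quantile map instead of computing it directly, and you remove $B_{\delta_k}\cap[T_k,\infty)$ rather than $E_k\cap[T_k,T_{k+1})$, which is equivalent because the bad sets are nested.
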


\begin{proof}
If $\om_{in}\equiv0$, the result holds trivially, so we assume
$\om_{in}\not\equiv0$.

\vspace{0.5em}
\noindent
\textbf{Step 1: Uniform  logarithmic bound for $\mathcal M_a(t)$.}
For every $a$ satisfying \eqref{eq:R2-range-of-a}, by Lemma \ref{lem:R2-logarithmic-y2-upper} and the boundedness of $H$,
\[
\begin{aligned}
    |\mathcal M_a(t)|
    \le
    \|H\|_{L^\infty}
    \int_{\R^2}|y|\om(t,z)\,dz\lesssim
    \left(
        \int_{\R^2}y^2\om(t,z)\,dz
    \right)^{1/2}
    \lesssim
    1+\sqrt{\log\langle t\rangle}.
\end{aligned}
\]

For any $T>0$, by Lemma~\ref{lem:R2-Xa-Morawetz}, 
\[
\begin{aligned}
    \int_0^T\mathcal D_a(t)\,dt
    \le
    2\bigl(\mathcal M_a(T)-\mathcal M_a(0)\bigr)
    \lesssim
    1+\sqrt{\log\langle T\rangle}.
\end{aligned}
\]
Integrating also in $a$ over the interval in
\eqref{eq:R2-range-of-a}, we obtain
\begin{equation}\label{eq:R2-density-one-family-budget}
\begin{aligned}
\int_{
- \|\om_{in}\|_{L^1}
}^{
 \|\om_{in}\|_{L^1}
}
\int_0^T
\mathcal D_a(t)\,dt\,da
\lesssim
1+\sqrt{\log\langle T\rangle}.
\end{aligned}
\end{equation}

\vspace{0.5em}
\noindent
\textbf{Step 2: A lower bound from horizontal concentration.}

For any $\delta>0$, let us consider the set of times  $t>0$ such that
\begin{equation}\label{eq:R2-density-one-bad-time}
    \sup_{x_0\in\R}
    \int_{|x-x_0|\le1}
    \om(t,z)\,dz
    \ge2\delta.
\end{equation}
Fix such a time $t>0$. Then there exists $x_0=x_0(t)\in\R$ such that
\begin{equation}\label{eq:R2-density-one-bad-strip}
    \int_{|x-x_0|\le1}
    \om(t,z)\,dz
    \ge\delta.
\end{equation}

To capture the mass in this packet, by Lemma \ref{lem:R2-Xa-basic} we can consider all the labels $a$ whose   quantile  $X_a(t)$ satisfies
\begin{equation}\label{eq:R2-density-one-Xa-near-xi}
    X_a(t)\in[x_0-1,x_0+1].
\end{equation}
Since the map
\[
    X\longmapsto
    \int_{\R^2}H(x-X)\om(t,z)\,dz
\]
is strictly decreasing, \eqref{eq:R2-density-one-Xa-near-xi} holds if and only if 
\begin{equation}\label{eq:R2-density-one-a-interval}
\begin{aligned}
\int_{\R^2}
H(x-x_0-1)\om(t,z)\,dz
\le a\le
\int_{\R^2}
H(x-x_0+1)\om(t,z)\,dz.
\end{aligned}
\end{equation}

Now we can compute the measure of the set of $a$'s satisfying \eqref{eq:R2-density-one-Xa-near-xi}:
\begin{equation}\label{eq:R2-density-one-a-length}\begin{aligned}
    &\Big| \left\{  a: X_a(t)\in[x_0-1,x_0+1] \right\}  \Big| \\&=\int_{\R^2}
\bigl(
H(x-x_0+1)-H(x-x_0-1)
\bigr)
\om(t,z)\,dz \\
&\ge \int_{|x-x_0|\le1}
\bigl(
H(x-x_0+1)-H(x-x_0-1)
\bigr)
\om(t,z)\,dz \ge
    2\delta H'(2),
\end{aligned}
\end{equation}
where  we have used the monotonicity of $H$ and \eqref{eq:R2-density-one-bad-strip}.

For any $a$ satisfying
\eqref{eq:R2-density-one-Xa-near-xi}, we have
\[
    |x-X_a(t)|\le2
    \qquad
    \text{whenever } \qquad|x-x_0|\le1,
\]
which gives
\[
    H'(x-X_a(t))
    \ge H'(2)
    \qquad
    \text{when } \qquad |x-x_0|\le1.
\]
On the other hand, setting $\rho=\frac{\delta}{8\|\om_{in}\|_{L^\infty}}
$, by \eqref{eq:R2-density-one-bad-strip} we have
\[
\begin{aligned}
\int_{\substack{|x-x_0|\le1\\
|y-X_a'(t)|
\ge \rho
}}
\om(t,z)\,dz
\ge
\frac{\delta}{2}.
\end{aligned}
\]
It follows from the definition of $\mathcal D_a(t)$ in \eqref{eq:R2-definition-Da} that
\begin{equation}\label{eq:R2-density-one-Da-lower}
\begin{aligned}
    \mathcal D_a(t)
    \ge
    H'(2)
    \left(
        \frac{\delta}
        {8\|\om_{in}\|_{L^\infty}}
    \right)^2
    \frac{\delta}{2} \quad \text{whenever} \quad X_a(t) \in [x_0-1,x_0+1].
\end{aligned}
\end{equation}
Thus, by \eqref{eq:R2-density-one-a-length}, we can integrate \eqref{eq:R2-density-one-Da-lower} over the interval $a\in (-\| \om_{in}\|_{L^1} , \| \om_{in}\|_{L^1})$ and  
 obtain 
\begin{equation}\label{eq:R2-density-one-family-lower}
\begin{aligned}
\int_{
- \|\om_{in}\|_{L^1}
}^{
 \|\om_{in}\|_{L^1}
}
\mathcal D_a(t)\,da
\ge
\frac{H'(2)^2}
{64\|\om_{in}\|_{L^\infty}^2}
\delta^4 .
\end{aligned}
\end{equation}
In other words, \eqref{eq:R2-density-one-family-lower}  holds  for any $\delta>0$ and $t\ge 0$ satisfying \eqref{eq:R2-density-one-bad-time}.

\vspace{0.5em}
\noindent
\textbf{Step 3: Construction of the density-one set.}

For any
$k \in \N$, define the set
\begin{equation}\label{eq: def E_k}
E_{k}:=\left\{
    t\ge0:
    \sup_{x_0\in \R}\int_{|x-x_0|\le 1}\om(t,z)\,dz \ge \frac2k
    \right\}.
\end{equation}
 
Combining
\eqref{eq:R2-density-one-family-budget} and
\eqref{eq:R2-density-one-family-lower}, there exists a constant $C_k>0$ depending only on $k$ and $\om_{in}$ such that for any $T>0$,
\begin{equation}\label{eq:R2-density-one-bad-measure}
\begin{aligned}
&
\Big|
E_k \cap [0,T]
\Big|
\le C_k \left(
1+\sqrt{\log\langle T\rangle}\right).
\end{aligned}
\end{equation}
In particular, each $E_k$ has
asymptotic density zero,
\begin{equation}\label{eq:R2-density-one-bad-zero-density}
    \lim_{T\to\infty}
    \frac{
    \left|
E_k \cap [0,T]
\right|
    }{T}
    =
    0 \quad \text{for every} \quad k \in \N.
\end{equation}

We may therefore choose an increasing sequence
$T_k\to\infty$ such that
\begin{equation}\label{eq sec 5 def of T_k}
    |E_k\cap[0,T]|
    \le
    2^{-k}T
    \qquad
    \text{for all }T\ge T_k.
\end{equation}

Define
\[
    \mathcal T
    :=
    (0,\infty)\setminus
    \bigcup_{k\ge1}
    \left(E_k\cap[T_k,T_{k+1})\right).
\]
For any $t\in\mathcal T\cap[T_k,T_{k+1})$, we have $t\notin E_k$. Then by the definition \eqref{eq: def E_k}, we obtain
\begin{equation}\label{eq:R2-density-one-ball-evacuation}
   \lim_{t\in \mathcal T, t\to \infty} \sup_{x_0\in\R}
    \int_{|x-x_0|\le1}
    \om(t,z)\,dz
    =0.
\end{equation}

It remains to prove that $\mathcal T$ has asymptotic density one. Let
$T\in[T_k,T_{k+1})$. Then
\[
    \mathcal T^c\cap[0,T]
    \subset
    \bigcup_{j=1}^k
    \left(E_j\cap[0,T]\right).
\]
Since $E_j\subset E_k$ whenever $j\le k$, we have
\[
    \mathcal T^c\cap[0,T]
    \subset
    E_k\cap[0,T].
\]
Hence, by \eqref{eq sec 5 def of T_k},
\[
    |\mathcal T^c\cap[0,T]|
    \le
    |E_k\cap[0,T]|
    \le
    2^{-k}T.
\]
Since $k\to\infty$ as $T\to\infty$, it follows that
\[
    \lim_{T\to\infty}
    \frac{|\mathcal T\cap[0,T]|}{T}
    =
    1.
\]

\vspace{0.5em}
\noindent
\textbf{Step 4: Velocity damping.}

Let $t_n\in\mathcal T$ be any sequence with $t_n\to\infty$, and set
\[
    \om_n:=\om(t_n),
    \qquad
    u_n:=u(t_n).
\]
The vorticities $\om_n$ have a fixed sign and satisfy
\[
    \sup_n
    \left(
        \|\om_n\|_{L^1}
        +
        \|\om_n\|_{L^\infty}
    \right)
    <\infty.
\]
Therefore,  \eqref{eq:R2-density-one-ball-evacuation} and Proposition \ref{prop:damping-circulation-equivalence} imply that
\[
    \|u_n\|_{L^p(\R^2)}
    \longrightarrow0
    \qquad
    \text{for every }2<p\le\infty.
\]
Since the sequence $t_n\in\mathcal T$ was arbitrary,
we obtain \eqref{eq:R2-density-one-velocity-damping}.
\end{proof}

\subsection{Nonlinear moment growth}

We next show that the nonlinear evolution forces sharp growth of both
the vertical and horizontal moments. In particular, the vertical
moments, which are conserved under the linear Couette evolution,
grow logarithmically under the nonlinear dynamics.

The $H$-center used below is inspired by
\cite{jyz2025superlineargradientgrowth2d}, where a related construction was
introduced in the study of perturbations of the Lamb dipole.

\begin{lemma}[Estimates for the $H$-center]
\label{lem:R2-H-center-estimates}
There exists a universal constant $C>0$ such that for any
\[
    0\le\om\in L^1(\R^2)\cap L^\infty(\R^2)
    \quad \text{and} \quad
    \om\not\equiv0,
\]
the following holds. 

If for some $X_0 \in\R$,
\begin{equation}\label{eq:R2-H-center-static}
    \int_{\R^2}H(x-X_0)\om(z)\,dz=0,
\end{equation}
then 
\begin{equation}\label{eq:R2-H-center-location}
    |X_0|+\int_{\R^2}|x-X_0|\om(z)\,dz
    \le
     \frac{C\left( 1+\|\om\|_{L^1} \right)^2}{\|\om\|_{L^1}}\left(1+\int_{\R^2}|x|\om(z)\,dz\right).
\end{equation} 
\end{lemma}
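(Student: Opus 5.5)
It suffices to bound $|X_0|$; the first-moment term then follows by the triangle inequality,
\[
\int|x-X_0|\om\le \int|x|\om+|X_0|\,\|\om\|_{L^1}.
\]
This contributes a factor $(1+\|\om\|_{L^1})$ times the bound for $|X_0|$. So the key step is to show
\[
|X_0|\lesssim \frac{1+\|\om\|_{L^1}}{\|\om\|_{L^1}}\Big(1+\int|x|\om\Big).
\]
By the reflection symmetry $x\mapsto -x$ (under which $H$ is odd, so \eqref{eq:R2-H-center-static} is preserved with $X_0\mapsto -X_0$), we may assume $X_0>0$. Write $m=\|\om\|_{L^1}$ and $M_1=\int|x|\om$.

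The idea is a Chebyshev argument. Split the mass at $x\le X_0/2$ versus $x>X_0/2$.

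On $\{x\le X_0/2\}$ we have $x-X_0\le -X_0/2$, hence $H(x-X_0)\le H(-X_0/2)=-H(X_0/2)<0$. On the complementary region we only use $H\le 1$. The constraint \eqref{eq:R2-H-center-static} then gives
\[
H(X_0/2)\int_{x\le X_0/2}\om\le \int_{x>X_0/2}\om\le \frac{2M_1}{X_0}
\]
by Chebyshev. Hence
\[
m=\int_{x\le X_0/2}\om+\int_{x> X_0/2}\om\le \frac{2M_1}{X_0}\Big(1+\frac1{H(X_0/2)}\Big).
\]

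Now consider two cases. If $X_0\le 2$, the desired bound is trivial. If $X_0\ge 2$, then $H(X_0/2)\ge H(1)>0$ is a universal constant, and we get $m\lesssim M_1/X_0$, i.e. $X_0\lesssim M_1/m$. Either way,
\[
|X_0|\lesssim 1+\frac{M_1}{m}\le \frac{1+m}{m}(1+M_1).
\]
Combining with the triangle-inequality step gives
\[
\int|x-X_0|\om\lesssim M_1+(1+m)^2 m^{-1}(1+M_1),
\]
and $M_1\le (1+m)^2m^{-1}(1+M_1)$ trivially, since $(1+m)^2/m\ge 1$. This yields \eqref{eq:R2-H-center-location}.

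There is no real obstacle here. The only delicate point is to use a lower bound on $|H|$ at a fixed distance, namely $H(1)>0$, rather than anything depending on $\om$, so that the constant stays universal. The case $X_0$ small is absorbed by the additive $1$.
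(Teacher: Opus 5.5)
Your proof is correct and follows essentially the same route as the paper. Both arguments reduce to bounding $|X_0|$ via the triangle inequality, split the mass at a threshold tied to $X_0$, and use the negativity of $H(x-X_0)$ on the left piece together with $\|H\|_{L^\infty}=1$ on the right piece to force a fixed fraction of the mass to lie at distance $\gtrsim X_0$. A Chebyshev bound against $\int|x|\omega$ then concludes. The differences are cosmetic: the paper splits at $X_0-1$, using $H(x-X_0)\le -H(1)$ there, where you split at $X_0/2$; and the paper treats $X_0\le -2$ directly instead of by reflection.
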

\begin{proof}

Throughout the proof, all implicit constants are universal.

It suffices to show that
\begin{equation}\label{eq sec5 control of X1}
|X_0 |
\lesssim
\frac{\left(1+\|\om\|_{L^1}\right)}{\|\om\|_{L^1}}
\left(
1+\int_{\R^2}|x|\om(z)\,dz
\right).
\end{equation}
If $|X_0|<2$, then \eqref{eq sec5 control of X1} holds  trivially. So we suppose first that $X_0\ge2$. Since $H(x-X_0)\le -H(1)$ when $x-X_0\le -1$,
it follows from \eqref{eq:R2-H-center-static} that
\[
\begin{aligned}
0
&=
\int_{x\le X_0-1}H(x-X_0)\om(z)\,dz
+
\int_{x>X_0-1}H(x-X_0)\om(z)\,dz
\\
&\le
-H(1)\int_{x\le X_0-1}\om(z)\,dz
+
\|H\|_{L^\infty}
\int_{x>X_0-1}\om(z)\,dz \\
&=-H(1)\|\om\|_{L^1}+\left( H(1)+\|H\|_{L^\infty} \right)\int_{x>X_0-1}\om(z)\,dz,
\end{aligned}
\]
which gives 
$$
\int_{x>X_0-1}\om(z)\,dz \gtrsim \|\om\|_{L^1}.
$$ 
Since $X_0\ge2$, we have $X_0-1\ge \frac12 X_0$ and hence
\[
\begin{aligned}
    \int_{\R^2}|x|\om(z)\,dz
    \ge
    \int_{x>X_0-1}|x|\om(z)\,dz
    \ge
    \frac12 X_0
    \int_{x>X_0-1}\om(z)\,dz
    \gtrsim
    X_0\|\om\|_{L^1},
\end{aligned}
\]
which proves \eqref{eq sec5 control of X1}.

Similarly, if $X_0\le-2$, then
\[
\begin{aligned}
0
&=
\int_{x<X_0+1}H(x-X_0)\om(z)\,dz
+
\int_{x\ge X_0+1}H(x-X_0)\om(z)\,dz
\\
&\ge
-\|H\|_{L^\infty}
\int_{x<X_0+1}\om(z)\,dz
+
H(1)
\int_{x\ge X_0+1}\om(z)\,dz
\\
&=
H(1)\|\om\|_{L^1}
-
\left(
    H(1)+\|H\|_{L^\infty}
\right)
\int_{x<X_0+1}\om(z)\,dz.
\end{aligned}
\]
Therefore,
\[
    \int_{x<X_0+1}\om(z)\,dz
    \gtrsim
    \|\om\|_{L^1},
\]
and by the same argument we obtain \eqref{eq sec5 control of X1}.
\end{proof}

The next lemma is crucial for the sharp moment growth estimates. Intuitively speaking,  if $X_0$ is the $H$-center, a positive amount of vorticity cannot avoid vertical spread near the center and  the interaction across the center simultaneously.

\begin{lemma} 
\label{lem:R2-H-center-coercivity}
There exists a universal constant $C>0$ such that for any
\[
    0\le\om\in L^1(\R^2)\cap L^\infty(\R^2)
    \quad \text{and} \quad
    \om\not\equiv0,
\]
the following holds. 

If for some $X \in\R$,
\begin{equation} 
    \int_{\R^2}H(x-X)\om(z)\,dz=0,
\end{equation}
then for any $v\in\R$,
\begin{equation}\label{eq:R2-H-center-coercivity}
\begin{aligned}
&\mathcal N_X^y[\om]
+
\int_{\R^2}
(y-v)^2H'(x-X)\om(z)\,dz\\
&\ge
\frac{C\|\om\|_{L^1}^3}{\left( 1+\|\om\|_{L^1} \right)\left(1+\|\om\|_{L^\infty}^2  \right)}\frac{1}{
1+\displaystyle\int_{\R^2}|x-X|\om(z)\,dz
},
\end{aligned}
\end{equation}
where $
\mathcal N_X^y[\om]
$ is the functional given by \eqref{eq:R2-def-NyX}. 
\end{lemma}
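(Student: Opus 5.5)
By translation invariance of $\mathcal N^y_X$, of the weighted moment, and of the right-hand side, we may take $X=0$. Write $m=\|\om\|_{L^1}$ and $A=\int|x|\om\,dz$. The balance condition $\int H(x)\om=0$ forces comparable mass on both sides of the center. Since $H(x)\ge H(1)$ for $x\ge1$ and $|H|\le1$, the same argument as in Lemma~\ref{lem:R2-H-center-estimates} gives
\[
\int_{x>-1}\om\gtrsim m
\quad\text{and}\quad
\int_{x<1}\om\gtrsim m .
\]
We then split according to whether a definite portion of mass sits near the center.

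\textbf{Case A:} $\int_{|x|\le L}\om\ge c_0 m$, with $L\approx 1+A/m$ chosen so that, by Chebyshev, $\int_{|x|>L}\om\le \tfrac14 m$. The idea is that either $\mathcal D:=\int (y-v)^2H'(x)\om$ is large, or this mass is concentrated near the line $y=v$. In the second case it must be spread horizontally, and the cross-center interaction then makes $\mathcal N^y_0$ large.

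To quantify this, set $\rho=c\,m/(L\|\om\|_{L^\infty})$. The strip $\{|x|\le L,\ |y-v|\le\rho\}$ has area at most $4L\rho$, so it carries at most $\tfrac12 c_0 m$ of mass. Hence $\mathcal D\gtrsim H'(L)\rho^2 m \approx L^{-3/2}\rho^2 m$.

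This bound is too weak in $L$, so I would instead use $\mathcal N^y$ directly. Its kernel
\[
\frac{(H(x)-H(x'))(x-x')}{|z-z'|^2}
\]
is nonnegative. For $x\le -1/2$ and $x'\ge 1/2$ with $|x|,|x'|\le L$, the numerator is at least $c|x-x'|H'(L)$. Since the masses on each side are comparable, the remaining task is to bound from below the interaction between the left and right masses. If both are confined vertically to $|y-v|\le R$, then $|z-z'|\lesssim L+R$, and the interaction is at least
\[
\frac{m^2 H'(L)}{L+R}.
\]
Choosing $R$ by Chebyshev from $\mathcal D$, namely $\int_{|y-v|>R,|x|\le L}\om\le \mathcal D/(H'(L)R^2)$, gives a dichotomy: either $\mathcal D\gtrsim m H'(L)R^2$, or $\mathcal N\gtrsim m^2H'(L)/(L+R)$.

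I expect this step to be the main obstacle. The weight $H'(L)\sim L^{-3/2}$ loses powers of $L$ against the target $1/(1+A)$. To avoid this, I would not use the full interval $[-L,L]$ but the mass near the center, on $|x|\le 2$. There $H'$ is of order one and $(H(x)-H(x'))(x-x')\gtrsim (x-x')^2$.

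This requires a further split. If the mass in $|x|\le 2$ is at least $\mu$, we get the two-alternative bound with order-one weights. If instead the mass straddles the center with little of it near the center, the left and right masses are at horizontal distance $\gtrsim 1$. For $x\le -2$ and $x'\ge 2$, the numerator $(H(x)-H(x'))(x-x')$ is at least $c|x-x'|$, since $H(x')-H(x)\ge 2H(2)$. So $\mathcal N\gtrsim \iint \om\om'/|z-z'|$ over the cross pairs, restricted to $|x|,|x'|\le L$ and $|y-v|,|y'-v|\le R$. This gives $\mathcal N\gtrsim m^2/(L+R)$ without any $H'$ loss, since $|x-x'|/|z-z'|^2\ge 1/|z-z'|$ when the horizontal separation dominates. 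When it does not, we still have $|x-x'|\ge 4$ and $|z-z'|\lesssim L+R$, so the kernel is bounded below by $4/(L+R)^2$. I would have to check this weaker bound carefully.

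I would then optimize over $R$. With $\mathcal D$ controlling the vertical tail in the relevant region (using $H'\gtrsim L^{-3/2}$ there, or only near the center), the dichotomy reads
\[
\mathcal N+\mathcal D\gtrsim \min\Bigl(\frac{m^2}{L+R},\ \frac{m R^2}{L^{3/2}}\Bigr).
\]
Taking $R\sim L$ yields the desired order $1/L\sim m/(m+A)$, up to the stated powers of $m$ and $\|\om\|_{L^\infty}$. The $\|\om\|_{L^\infty}$ factor enters only through the area argument in the near-center case. Tracking these constants should reproduce the prefactor
\[
\frac{m^3}{(1+m)(1+\|\om\|_{L^\infty}^2)}.
\]
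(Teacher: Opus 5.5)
Your overall architecture matches the paper's. Mass near the center is handled by the area argument, which gives $\mathcal D\gtrsim m^3/\|\om\|_{L^\infty}^2$. In the straddling case, the balance condition puts $\gtrsim m$ of mass on each side at distance $\gtrsim1$ from the center, and $\mathcal N^y_0$ is played against $\mathcal D$.

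\textbf{The gap is in your final optimization.} Consider the cross pairs $x\le-2$, $x'\ge2$ inside the box $|x|,|x'|\le L$, $|y-v|,|y'-v|\le R$. The kernel bound you actually have there is
\[
\frac{\bigl(H(x)-H(x')\bigr)(x-x')}{|z-z'|^2}\gtrsim\min\Bigl(\frac1L,\frac1{R^2}\Bigr).
\]
The first term comes from pairs with $|y-y'|\le|x-x'|$; the second from pairs with $|x-x'|\ge4$ and $|y-y'|\le2R$. The bound is not $1/(L+R)$. The ``weaker bound'' you flagged is not a side case you can check away. Your Chebyshev dichotomy sees vertical spread only through the uniform weight $H'(L)$ and only beyond height $R$. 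Nothing in it excludes cross mass sitting at $(\pm2,v\pm R/2)$, which is vertically separated by $\sim R$ but horizontally by $O(1)$. So with $R\sim L$ the $\mathcal N$-alternative yields only $m^2/L^2$. Since $L\sim 1+A/m$, this misses the target $\sim 1/(1+A)$ by a factor of $L$. The claim that ``$R\sim L$ yields order $1/L$'' is therefore false as written.

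\textbf{The fix within your framework is $R\sim\sqrt L$.} Then the kernel is $\gtrsim1/L$ on all cross pairs, so $\mathcal N^y_0\gtrsim m^2/L$. The other alternative gives $\mathcal D\gtrsim mH'(L)R^2\sim mL^{-1/2}\ge m/L$. Since $A\gtrsim m$ in the straddling case, both bounds dominate the right-hand side of the lemma. This step uses that $H'$ decays slower than $\langle x\rangle^{-2}$.

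The paper avoids choosing $L,R$ altogether with a pairwise inequality that uses the local weight $H'(x)$ rather than $H'(L)$. After translating to $X=v=0$, for $x\ge1$ and $x'\le-1$,
\[
\frac1{x-x'}\lesssim\frac{\bigl(H(x)-H(x')\bigr)(x-x')}{|z-z'|^2}+y^2H'(x)+(y')^2H'(x').
\]
This holds because if $|y-y'|>x-x'$, then $y^2+(y')^2\gtrsim(x-x')^2$ and $H'(x),H'(x')\gtrsim(x-x')^{-3/2}$. Integrate over these pairs and apply Cauchy--Schwarz:
\[
m^4\lesssim\Bigl(\iint_{x\ge1,\,x'\le-1}\frac{\om(z)\om(z')}{x-x'}\,dzdz'\Bigr)\Bigl(\iint_{x\ge1,\,x'\le-1}(x-x')\om(z)\om(z')\,dzdz'\Bigr).
\]
The last factor is $\lesssim mA$, so this produces the $1/(1+A)$ directly.

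Also note that your straddling case needs the separated-mass claim: $\int_{x\le-2}\om\gtrsim m$ and $\int_{x\ge2}\om\gtrsim m$ once $\int_{|x|<2}\om\le\theta m$. The weaker bounds $\int_{x>-1}\om,\int_{x<1}\om\gtrsim m$ you stated are not enough. The separated claim follows from the same balancing argument with $H(2)$ in place of $H(1)$.
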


\begin{proof}
Throughout the proof, all implicit constants are universal.

By translation invariance of the planar Biot--Savart kernel, after the
change of variables
\[
    \widetilde\om(x,y)=\om(x+X,y+v),
\]
it is enough to consider the case $X=v=0$. Thus, we assume
\begin{equation}\label{eq:R2-H-center-origin}
    \int_{\R^2}H(x)\om(z)\,dz=0.
\end{equation}

 Let $\theta=\frac{H(1)}{2\left( H(1)+\|H\|_{L^\infty}\right)} >0 $. We divide the proof into two cases according to whether the vorticity mass in the strip $|x|< 1$ is greater or smaller than $\theta\|\om\|_{L^1}$.

\vspace{0.5em}
\noindent
\textbf{Case 1: $\int_{|x|<1}\om(z)\,dz
    \ge
    \theta\|\om\|_{L^1}.$}

Set $\rho=\frac{\theta\|\om\|_{L^1}}{8\|\om\|_{L^\infty}}$. Then
\[
\begin{aligned}
\int_{\substack{|x|<1\\|y|<\rho}}
\om(z)\,dz
&\le
4\rho\|\om\|_{L^\infty}
=
\frac12\theta\|\om\|_{L^1},
\end{aligned}
\]
and therefore
\[
    \int_{\substack{|x|<1\\|y|\ge\rho}}
    \om(z)\,dz
    \ge 
    \frac12 \theta\|\om\|_{L^1}.
\]
Since $H'(x)\gtrsim1$ on $[-1,1]$, we obtain
\begin{equation}\label{eq:R2-coercivity-case1-lower}
\begin{aligned}
    \int_{\R^2}y^2H'(x)\om(z)\,dz
    \gtrsim
    \int_{\substack{|x|<1\\|y|\ge\rho}}y^2H'(x)\om(z)\,dz
    \gtrsim
    \rho^2\|\om\|_{L^1}
    \gtrsim
    \frac{\|\om\|_{L^1}^3}{\|\om\|_{L^\infty}^2},
\end{aligned}
\end{equation}
which yields \eqref{eq:R2-H-center-coercivity} in this case.

\vspace{0.5em}
\noindent
\textbf{Case 2: $\int_{|x|<1}\om(z)\,dz<\theta\|\om\|_{L^1}$.}

In this case, we have
\begin{equation}\label{eq sec5 lower bound for |x| ge 1}
    \int_{x\le-1}\om(z)\,dz
    +
    \int_{x\ge1}\om(z)\,dz
    \ge
    (1-\theta)\|\om\|_{L^1}.
\end{equation}
We claim that 
\begin{equation}\label{eq:R2-left-right-mass}
    \int_{x\le-1}\om(z)\,dz
    \gtrsim
    \|\om\|_{L^1}
    \qquad \text{and} \qquad
    \int_{x\ge1}\om(z)\,dz
    \gtrsim
    \|\om\|_{L^1},
\end{equation}
where the implicit constants depend only on $H$ and are therefore universal.

Indeed, by \eqref{eq:R2-H-center-origin},
\begin{equation}\label{eq sec5 H(x)w decomposition}
    \int_{x\ge1}H(x)\om(z)\,dz
    =
    -
    \int_{x\le-1}H(x)\om(z)\,dz
    -
    \int_{|x|<1}H(x)\om(z)\,dz.
\end{equation}
Since $H(x)\ge H(1)$ when $x\ge1$, we obtain
\begin{equation}\label{eq sec5 H(1)}
    H(1)\int_{x\ge1}\om(z)\,dz
    \le
    \|H\|_{L^\infty}
    \left(
        \int_{x\le-1}\om(z)\,dz
        +
        \int_{|x|<1}\om(z)\,dz
    \right).
\end{equation}
On the other hand, by \eqref{eq sec5 lower bound for |x| ge 1},
\[
    \int_{x\ge1}\om(z)\,dz
    \ge
    (1-\theta)\|\om\|_{L^1}
    -
    \int_{x\le-1}\om(z)\,dz.
\]
Substituting this into \eqref{eq sec5 H(1)}, we get
\[
\begin{aligned}
H(1)
\left(
    (1-\theta)\|\om\|_{L^1}
    -
    \int_{x\le-1}\om(z)\,dz
\right)\le
\|H\|_{L^\infty}
\left(
    \int_{x\le-1}\om(z)\,dz
    +
    \theta\|\om\|_{L^1}
\right).
\end{aligned}
\]
Therefore, recall that $\theta=\frac{H(1)}{2\left( H(1)+\|H\|_{L^\infty}\right)}$,
\[
\begin{aligned}
\left(
    H(1)+\|H\|_{L^\infty}
\right)
\int_{x\le-1}\om(z)\,dz
&\ge
\left(
    H(1)
    -
    \theta
    \left(
        H(1)+\|H\|_{L^\infty}
    \right)
\right)
\|\om\|_{L^1}
\\
&=
\frac12 H(1)\|\om\|_{L^1},
\end{aligned}
\]
which gives
\begin{equation}\label{eq sec5 claim1}
    \int_{x\le-1}\om(z)\,dz
    \gtrsim
    \|\om\|_{L^1}.
\end{equation}

Similarly, since $-H(x)\ge H(1)$ when $x \le -1$, it follows from \eqref{eq sec5 H(x)w decomposition} that
\[
    H(1)\int_{x\le-1}\om(z)\,dz
    \le
    \|H\|_{L^\infty}
    \left(
        \int_{x\ge1}\om(z)\,dz
        +
        \int_{|x|<1}\om(z)\,dz
    \right).
\]
Combining this with \eqref{eq sec5 lower bound for |x| ge 1}, the same argument yields
\[
    \int_{x\ge1}\om(z)\,dz
    \gtrsim
    \|\om\|_{L^1},
\]
which, together with \eqref{eq sec5 claim1}, completes the proof of
\eqref{eq:R2-left-right-mass}.

Next, for any $x \ge 1$ and $x' \le -1$, we claim that 
\begin{equation}\label{eq:R2-pointwise-coercivity}
\frac1{x-x'}
\lesssim
\frac{
    \bigl(H(x)-H(x')\bigr)(x-x')
}{
    (x-x')^2+(y-y')^2
}
+
y^2H'(x)
+
(y')^2H'(x').
\end{equation}
Assuming this has been proved, write $A:=\{(z,z')\in \R^2 \times \R^2 : x \ge 1 \,\,\, \text{and} \,\,\, x'\le -1\}$ for short. Multiplying \eqref{eq:R2-pointwise-coercivity} by
$\om(z)\om(z')$ and integrating over $A$,
we obtain
\begin{align}
\iint_{A}
\frac{\om(z)\om(z')}{x-x'}
\,dzdz'\lesssim
\mathcal N_0^y[\om]
+
\|\om\|_{L^1}
\int_{\R^2}y^2H'(x)\om(z)\,dz.
\label{eq:R2-coercivity-upper-interaction}
\end{align}
Meanwhile, by \eqref{eq:R2-left-right-mass},
\begin{equation}\label{eq sec5 claim2}\begin{aligned}
\|\om\|_{L^1}^4
&\lesssim
\left(
\iint_A
\om(z)\om(z')
\,dzdz'
\right)^2
\\
&\le
\left(
\iint_A
\frac{\om(z)\om(z')}{x-x'}
\,dzdz'
\right)
\left(
\iint_A
(x-x')
\om(z)\om(z')
\,dzdz'
\right).
\end{aligned}\end{equation}
Note that
\[
\begin{aligned}
\iint_A
(x-x')
\om(z)\om(z')
\,dzdz'
\le
\|\om\|_{L^1}
\left(
1+\int_{\R^2}|x|\om(z)\,dz
\right).
\end{aligned}
\]
Combining
\eqref{eq:R2-coercivity-upper-interaction} and \eqref{eq sec5 claim2}, we obtain
\[
\begin{aligned}
\frac{\|\om\|_{L^1}^3}{
1+\displaystyle\int_{\R^2}|x|\om(z)\,dz
}
&\lesssim
\mathcal N_0^y[\om]
+
\|\om\|_{L^1}
\int_{\R^2}y^2H'(x)\om(z)\,dz
\\
&\le
\left(
1+\|\om\|_{L^1}
\right)
\left(
\mathcal N_0^y[\om]
+
\int_{\R^2}y^2H'(x)\om(z)\,dz
\right),
\end{aligned}
\]
which proves \eqref{eq:R2-H-center-coercivity}.

Thus, it suffices to prove the claim \eqref{eq:R2-pointwise-coercivity} when $x \ge 1$ and $x' \le -1$. We now restrict to $x \ge 1$ and $x'\le -1$. Then
\[
    x-x'\ge2 \qquad \text{and} \qquad H(x)-H(x')\gtrsim1.
\]
Assume
\(
    |y-y'|\le x-x',
\)
then 
\[
    \frac{1}{x-x'} \lesssim \frac{
    \bigl(H(x)-H(x')\bigr)(x-x')
}{
    (x-x')^2+(y-y')^2
}.
\]
If instead
\(
    |y-y'|>x-x',
\)
then
\[
    y^2+(y')^2
    \gtrsim
    (x-x')^2.
\]
Since
\[
    H'(x)\gtrsim (x-x')^{-3/2}
    \qquad \text{and} \qquad
    H'(x')\gtrsim (x-x')^{-3/2},
\]
it follows that
\[
    y^2H'(x)+(y')^2H'(x')
    \gtrsim
    (x-x')^{1/2}
    \gtrsim
    \frac1{x-x'},
\]
which proves \eqref{eq:R2-pointwise-coercivity} and hence completes the proof.

\end{proof}

With Lemma \ref{lem:R2-H-center-coercivity} established, we can prove the enhanced dispersion bounds for positive vorticity. Theorem \ref{thm:intro-R2-positive-damping} then follows by combining together Theorem \ref{thm:R2-density-one-global-damping}  and Theorem \ref{thm:R2-positive-sharp-moment-growth}.

\begin{theorem}[Enhanced dispersion]
\label{thm:R2-positive-sharp-moment-growth}
There exists a universal constant $\ep_0>0$ such that the following
holds. For any $m_0>0$ and $R_0>0$, there exist constants
$c_0>0$    and $T_0>0$ depending only on $m_0$ and $R_0$,
such that for any initial data
\[
    0\le\om_{in}\in L^1(\R^2)\cap L^\infty(\R^2)
\]
satisfying
\[
    m_0\le\|\om_{in}\|_{L^1},
    \qquad
    \|\om_{in}\|_{L^1}
    +
    \|\om_{in}\|_{L^\infty}
    \le\ep_0, \qquad \int_{\R^2}|z|^2\om_{in}(z)\,dz\le R_0,
\]
for any $t\ge T_0$, the corresponding solution $\om(t)$  satisfies   the estimates
\begin{equation}\label{eq:R2-positive-y-critical-mass}
    \int_{y\ge c_0\sqrt{\log t}}
    \om(t,z)\,dz
    \ge c_0, \qquad 
    \int_{y\le-c_0\sqrt{\log t}}
    \om(t,z)\,dz
    \ge c_0
\end{equation}
and
\begin{equation}\label{eq:R2-positive-x-critical-mass}
    \int_{x\ge c_0t\sqrt{\log t}}
    \om(t,z)\,dz
    \ge c_0, \qquad
    \int_{x\le-c_0t\sqrt{\log t}}
    \om(t,z)\,dz
    \ge c_0.
\end{equation}
\end{theorem}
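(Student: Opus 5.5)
The plan is to use the $H$-center $X_0(t)$, that is, the quantile $a=0$ of Lemma~\ref{lem:R2-Xa-basic}, together with its functional $\mathcal M_0(t)$. Lemma~\ref{lem:R2-Xa-Morawetz} gives
\[
\frac{d}{dt}\mathcal M_0\ge \tfrac12\big(\mathcal D_0+\mathcal N^y_{X_0}[\om]\big).
\]
Lemma~\ref{lem:R2-H-center-coercivity}, applied with $v=X_0'(t)$, bounds the right side from below by
\[
c\,m_0^3\Big(1+\int|x-X_0(t)|\om(t)\,dz\Big)^{-1}.
\]
By Lemma~\ref{lem:R2-H-center-estimates}, this is at least $c\,(1+\int|x|\om(t))^{-1}$. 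It then remains to control $\int|x|\om(t)$ from above. By the flow bounds in the proof of Lemma~\ref{lem:R2-logarithmic-y2-upper}, $|x(t)|\lesssim |x|+t|y|+\ep_0 t+\ep_0 t^2$ along trajectories. I would sharpen the last term: the $x$-displacement is $\int_0^t (y(s)+u^x)\,ds$, and we have the Hamiltonian bound $\int y(s)^2\om\lesssim \log\langle s\rangle$. By Cauchy--Schwarz in $(s,z)$ this gives $\int|x|\om(t)\lesssim C(1+t\sqrt{\log t})$, since $\|u\|_{L^\infty}\lesssim\ep_0$ is small but only contributes $\ep_0 t$. Integrating, $\mathcal M_0(t)\ge \mathcal M_0(T_0)+c\int_{T_0}^t \frac{ds}{s\sqrt{\log s}}\gtrsim \sqrt{\log t}$. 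Since $|\mathcal M_0|\le\int|y|\om$, we get $\int |y|\om(t)\gtrsim\sqrt{\log t}$, which matches the upper bound of Lemma~\ref{lem:R2-logarithmic-y2-upper}.

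To upgrade this moment lower bound into mass at height $\pm c_0\sqrt{\log t}$ on each side, I would use the structure of $\mathcal M_0=\int yH(x-X_0)\om$ more carefully. Split $\om$ into the region $|y|\le c_0\sqrt{\log t}$, which contributes at most $c_0\sqrt{\log t}\,\|\om\|_{L^1}$, and the complementary region. Cauchy--Schwarz with the $y^2$ bound $C\log t$ shows that mass $\mu$ in $\{|y|\ge c_0\sqrt{\log t}\}$ contributes at most $\sqrt{\mu}\sqrt{C\log t}$. Hence $\mu\gtrsim1$. For the sign split I would note that $\int y\om$ is conserved (vertical impulse, since $\int u^y\om=0$). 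A large contribution of $\int_{y>0}y\om$ then forces an equally large $\int_{y<0}|y|\om$, up to an $O(1)$ error, and the same Cauchy--Schwarz gives \eqref{eq:R2-positive-y-critical-mass}.

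For \eqref{eq:R2-positive-x-critical-mass}, I would use $\frac{d}{dt}\int x\om=\int y\om=\mathrm{const}$ and $\frac{d}{dt}\int x^2\om=2\int x(y+u^x)\om$. Then $\frac{d^2}{dt^2}\int x^2\om\approx 2\int y^2\om+O(\ep_0)$ terms, using $\frac{d}{dt}\int xy\om=\int y^2\om+\int(xu^y+yu^x)\om$; the nonlinear terms are computed via symmetrization, as for the planar virial identities. Integrating twice gives $\int x^2\om\gtrsim t^2\log t$. To get mass on each side at distance $c_0t\sqrt{\log t}$, I would instead argue Lagrangianly. Particles starting in the upper mass set found above stay at height $\gtrsim\sqrt{\log s}$ for most $s\in[t/2,t]$; this is where I would reapply the previous step at times comparable to $t$, with a monotonicity or continuity argument. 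They therefore drift right by $\gtrsim t\sqrt{\log t}$, and symmetrically to the left. The main obstacle is this last step, together with making the time-integrated lower bound $\int y^2\om\gtrsim\log t$ sign-resolved along trajectories. I would first try to control it by the log-Lipschitz flow and by the smallness of $u$, which keeps vertical displacement $\lesssim\ep_0 s$ but not $o(\sqrt{\log})$. This will likely require a quantitative monotonicity in $\mathcal M_0$ restricted to $\{y>0\}$.
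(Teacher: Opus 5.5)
Your argument for the vertical bounds \eqref{eq:R2-positive-y-critical-mass} is correct and is the paper's route. It uses monotonicity of $\mathcal M_0$, coercivity at the $H$-center with $v=X_0'(t)$, and the bound $\int|x|\om\lesssim 1+t\sqrt{\log t}$. It then splits signs via conservation of $\int y\om$ and applies Cauchy--Schwarz against $\int y^2\om\lesssim\log t$.

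The genuine gap is the horizontal bound \eqref{eq:R2-positive-x-critical-mass}, which you leave open, and neither of your routes closes it. The virial identity only yields $\int x^2\om\gtrsim t^2\log t$, and this cannot be turned into mass at distance $c_0t\sqrt{\log t}$. The matching upper bound $\int x^2\om\lesssim t^2\log t$ allows a mass $\mu=o(1)$ at distance $L\gg t\sqrt{\log t}$ to carry the whole second moment. You have no higher-moment upper bound to exclude this, since the Hamiltonian controls only $\int y^2\om$. The Lagrangian plan fails for the reason you yourself note. The bounds of your vertical step are Eulerian, so the mass above height $c_0\sqrt{\log s}$ need not consist of the same particles at different $s\in[t/2,t]$. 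Moreover, $|u^y|\lesssim\ep_0$ allows vertical excursions of size $\ep_0 t\gg\sqrt{\log t}$.

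The missing ingredient is a first-moment lower bound, obtained from one more functional driven by $\mathcal M_0$. Set $F(x)=\int_0^xH(s)\,ds$ and $\mathcal Q(t)=\int F(x-X_0(t))\om(t,z)\,dz$. Because $\int H(x-X_0(t))\om\,dz=0$, the $X_0'$ term drops out and
\[
\frac{d}{dt}\mathcal Q(t)=\mathcal M_0(t)+\int H(x-X_0(t))\,u^x\om\,dz=\mathcal M_0(t)+O(1).
\]
Your bound $\mathcal M_0\gtrsim\sqrt{\log t}$ then gives $\mathcal Q(t)\gtrsim t\sqrt{\log t}$. Since $0\le F(x)\le|x|$, Lemma~\ref{lem:R2-H-center-estimates} gives $\int|x|\om(t)\gtrsim t\sqrt{\log t}$. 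The identity $\int x\om(t)=\int x\om_{in}+t\int y\om_{in}=O(t)$, which you already have, splits this between $\{x>0\}$ and $\{x<0\}$. Cauchy--Schwarz against $\int x^2\om\lesssim t^2\log t$ then gives both horizontal mass bounds exactly as in the vertical case; this upper bound follows from your Lagrangian displacement estimate after squaring. No sign-resolved monotonicity of $\mathcal M_0$ on $\{y>0\}$ is needed.
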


\begin{proof}
We first fix  $\ep_0 \leq 1$   so that   Lemma \ref{lem:R2-Xa-Morawetz} holds. Throughout the proof, all implicit constants in
$\lesssim_{m_0,R_0}$ and $\gtrsim_{m_0,R_0}$ depend only on $m_0$ and $R_0$.

\vspace{0.5em}
\noindent
\textbf{Step 1: Upper bounds for the second moments.}

 By Lemma~\ref{lem:R2-logarithmic-y2-upper} and the assumptions on
$\om_{in}$, we have
\begin{equation}\label{eq:R2-moment-growth-y2-upper}
    \int_{\R^2}y^2\om(t,z)\,dz
    \lesssim_{R_0}
    1+\log\langle t\rangle.
\end{equation}

For the horizontal second moment, a direct calculation gives
\[
\frac{d}{dt}
\int_{\R^2}x^2\om(t,z)\,dz
=
2\int_{\R^2}
x\bigl(y+u^x(t,z)\bigr)\om(t,z)\,dz.
\]
Therefore, it follows from \eqref{eq:R2-moment-growth-y2-upper} that
\[
\begin{aligned}
\frac{d}{dt}
\left(
    \int_{\R^2}x^2\om(t,z)\,dz
\right)^{\frac12}
&\le
\left(
    \int_{\R^2}
    \bigl(y+u^x(t,z)\bigr)^2
    \om(t,z)\,dz
\right)^{\frac12}
\\
&\lesssim_{R_0}
\sqrt{1+\log\langle t\rangle}.
\end{aligned}
\]
Thus,
\begin{equation}\label{eq:R2-x2-upper}
    \int_{\R^2}x^2\om(t,z)\,dz
    \lesssim_{R_0}
    1+t^2\left(1+\log\langle t\rangle\right),
\end{equation}
and by Cauchy--Schwarz,
\[
    \int_{\R^2}|x|\om(t,z)\,dz
    \lesssim_{R_0}
    1+t\sqrt{1+\log\langle t\rangle}.
\]
Let $X_0(t)$ be the point defined in Lemma \ref{lem:R2-Xa-basic}. By Lemma \ref{lem:R2-H-center-estimates}, we see that
\begin{equation}\label{eq:R2-centered-x-first-upper}
    \int_{\R^2}|x-X_0(t)|\om(t,z)\,dz
    \lesssim_{m_0,R_0}
    1+t\sqrt{1+\log\langle t\rangle}.
\end{equation}

\vspace{0.5em}
\noindent
\textbf{Step 2: Two vertical mass bounds.}

Since the initial data $\om_{in}$ satisfy the assumptions in Lemma~\ref{lem:R2-Xa-Morawetz}, we have
\begin{equation}\label{eq:thm:R2-growth M0}
\frac{d}{dt}\mathcal M_0(t)
\gtrsim
    \mathcal D_0(t)
    +
    \mathcal N_{X_0(t)}^y[\om(t)].
\end{equation}

For the two terms on the right-hand side above, by  Lemma~\ref{lem:R2-H-center-coercivity} and \eqref{eq:R2-centered-x-first-upper},
together with the conservation of the $L^1$ and $L^\infty$ norms,
we obtain
\begin{equation}\label{eq:thm:R2-growth M0 Ny}
\begin{aligned}
\mathcal D_0(t)+ \mathcal N_{X_0(t)}^y[\om(t)]
&\gtrsim_{m_0}
\frac{1}{
1+
\int_{\R^2}|x-X_0(t)|\om(t,z)\,dz
}\\
&\gtrsim_{m_0,R_0}
    \frac1{
    1+t\sqrt{1+\log\langle t\rangle}
    }. 
\end{aligned}
\end{equation}
 
By \eqref{eq:thm:R2-growth M0} and \eqref{eq:thm:R2-growth M0 Ny}, we can find $T_1>0$ depending only on $m_0$ and $R_0$ such that
\begin{equation}\label{eq:R2-M0-lower}
    \mathcal M_0(t)
    \gtrsim_{m_0,R_0}
    \sqrt{\log t} \qquad \text{for all $t \ge T_1$}.
\end{equation}
Since
\begin{equation}\label{eq sec5 upper bound for M0}
    \mathcal M_0(t)
    \le
    \|H\|_{L^\infty}
    \int_{\R^2}|y|\om(t,z)\,dz,
\end{equation}
we obtain
\begin{equation}\label{eq sec5 yw intergral decom}
    \int_{y>0}y\om(t,z)\,dz+\int_{y<0} |y|\om(t,z)\,dz=\int_{\R^2}|y|\om(t,z)\,dz
    \gtrsim_{m_0,R_0}
    \sqrt{\log t}.
\end{equation}
On the other hand, recall that
\begin{equation}\label{eq:R2-y-momentum}
    \int_{\R^2}y\om(t,z)\,dz
    =
    \int_{\R^2}y\om_{in}(z)\,dz,
\end{equation}
we have 
$$
\int_{y>0}y\om(t,z)\,dz-\int_{y<0} |y|\om(t,z)\,dz=\int_{\R^2}y\om_{in}(z)\,dz,
$$
which, together with \eqref{eq sec5 yw intergral decom}, implies that
\begin{equation}
\begin{aligned}
    \int_{y>0}y\om(t,z)\,dz
    \ge
    C_{m_0,R_0}\sqrt{\log t}+\frac12\int_{\R^2}y\om_{in}(z)\,dz
\end{aligned}
\end{equation}
and
\begin{equation}
\begin{aligned}
    \int_{y<0}|y|\om(t,z)\,dz
    \ge
    C_{m_0,R_0}\sqrt{\log t}-\frac12\int_{\R^2}y\om_{in}(z)\,dz.
\end{aligned}
\end{equation}
Thus, we can choose $T_2$ large enough depending on $m_0$ and $R_0$ such that 
\begin{equation}\label{eq:R2-positive-negative-y-first}
    \int_{y>0}y\om(t,z)\,dz \gtrsim_{m_0,R_0} \sqrt{\log t} \quad \text{and} \quad \int_{y<0}|y|\om(t,z)\,dz\gtrsim_{m_0,R_0} \sqrt{\log t} 
\end{equation}
for all $t \ge T_2$.

Next, choose $c_1>0$ small enough depending only on $m_0$ and $R_0$ such that for all $c \le c_1$,
\begin{equation}\label{eq:R2-positive-negative-y-first 2}
\begin{aligned}
\int_{y\ge c\sqrt{\log t}}
y\om(t,z)\,dz
\ge
\int_{y>0}y\om(t,z)\,dz
-
c\sqrt{\log t}\,
\|\om_{in}\|_{L^1}
\gtrsim_{m_0,R_0}
\sqrt{\log t}.
\end{aligned}
\end{equation}
 Note that
$$
\int_{y \ge c\sqrt{\log t}} y\om(t,z)\,dz \le \left(\int_{y \ge c\sqrt{\log t}} y^2\om(t,z)\,dz\right)^{\frac12}\left(\int_{y \ge c\sqrt{\log t}} \om(t,z)\,dz\right)^{\frac12};
$$
it follows from \eqref{eq:R2-positive-negative-y-first 2} and \eqref{eq:R2-moment-growth-y2-upper} that
\begin{equation}\label{eq sec5 t ge logt1}
    \int_{y\ge c\sqrt{\log t}}
    \om(t,z)\,dz
    \gtrsim_{m_0,R_0}1 \qquad \text{for all $t \ge T_2$ and $c\le c_1$}.
\end{equation}
And, using an identical argument, we also obtain
\begin{equation}\label{eq sec5 t ge logt2}
    \int_{y\le-c\sqrt{\log t}}
    \om(t,z)\,dz
    \gtrsim_{m_0,R_0}1 \qquad \text{for all $t \ge T_2$ and $c\le c_1$}.
\end{equation}

\vspace{0.5em}
\noindent
\textbf{Step 3: The two horizontal mass bounds.}
Define
\begin{equation}\label{eq:R2-def-F}
    F(x)
    :=
    \int_0^x H(s)\,ds
\end{equation}
and
\begin{equation}\label{eq:R2-def-Q}
    \mathcal Q(t)
    :=
    \int_{\R^2}
    F(x-X_0(t))
    \om(t,z)\,dz.
\end{equation}
Since $H$ is odd and increasing, $F$ is even and
\begin{equation}\label{eq:R2-F-upper}
    0\le F(x)\lesssim |x|.
\end{equation}
Using the defining property of $X_0(t)$, a direct calculation gives
\begin{equation}\label{eq:R2-Q-prime}
\frac{d}{dt}\mathcal Q(t)
=
\mathcal M_0(t)
+
\int_{\R^2}
H(x-X_0(t))
u^x(t,z)\om(t,z)\,dz.
\end{equation}
Moreover,
\[
\begin{aligned}
\left|
\int_{\R^2}
H(x-X_0(t))
u^x(t,z)\om(t,z)\,dz
\right|
&\le
\|H\|_{L^\infty}
\|u(t)\|_{L^\infty}
\|\om_{in}\|_{L^1}
\lesssim1.
\end{aligned}
\]
Therefore, by \eqref{eq:R2-M0-lower},    there exists $T_3 \ge 0$ depending only on $m_0$ and $R_0$ such that
\begin{equation}\label{eq:R2-Q-lower}
    \mathcal Q(t)
    \gtrsim_{m_0,R_0}
    t\sqrt{\log t} \qquad \text{for all $t \ge T_3$}.
\end{equation}
Using \eqref{eq:R2-F-upper} and \eqref{eq:R2-H-center-location} from Lemma \ref{lem:R2-H-center-estimates}, we obtain
\[
\begin{aligned}
\mathcal Q(t)
\lesssim
\int_{\R^2}|x-X_0(t)|\om(t,z)\,dz
\lesssim_{m_0}
1+\int_{\R^2}|x|\om(t,z)\,dz,
\end{aligned}
\]
which, together with \eqref{eq:R2-Q-lower}, yields
\begin{equation}\label{eq:R2-x-first-lower}
    \int_{x>0}x\om(t,z)\,dz+\int_{x<0}|x|\om(t,z)\,dz=\int_{\R^2}|x|\om(t,z)\,dz
    \gtrsim_{m_0,R_0}
    t\sqrt{\log t}
\end{equation}
for all $t \ge T_3$. Recall the identity
\begin{equation}\label{eq:R2-x-momentum}
\begin{aligned}
    \int_{x>0}x\om(t,z)\,dz-\int_{x<0}|x|\om(t,z)\,dz&=\int_{\R^2}x\om(t,z)\,dz\\
   & =
    \int_{\R^2}x\om_{in}(z)\,dz
    +
    t\int_{\R^2}y\om_{in}(z)\,dz.
    \end{aligned}
\end{equation}
Since
\[
    \left|
    \int_{\R^2}x\om_{in}(z)\,dz
    \right|
    +
    \left|
    \int_{\R^2}y\om_{in}(z)\,dz
    \right|
    \lesssim_{R_0}1,
\]
combining \eqref{eq:R2-x-first-lower} and
\eqref{eq:R2-x-momentum}, we can choose $T_4$ large enough depending on $m_0$ and $R_0$ such that 
\begin{equation}\label{eq:R2-positive-negative-x-first}
\begin{aligned}
    \int_{x>0}x\om(t,z)\,dz
    \gtrsim_{m_0,R_0}
    t\sqrt{\log t}
\end{aligned}
\end{equation}
and
\begin{equation}
\begin{aligned}
    \int_{x<0}|x|\om(t,z)\,dz
    \gtrsim_{m_0,R_0}
    t\sqrt{\log t}
\end{aligned}
\end{equation}
for all $t \ge T_4$.

Using \eqref{eq:R2-x2-upper} and a similar argument in Step 2,  we can find $c_2>0$ depending only on $m_0$ and $R_0$ such that
\begin{equation}\label{eq sec5 t ge logt3}
    \int_{x\ge ct\sqrt{\log t}}
    \om(t,z)\,dz
    \gtrsim_{m_0,R_0}1 \qquad \text{for all $t \ge T_4$ and $c \le c_2$}
\end{equation}
and
\begin{equation}\label{eq sec5 t ge logt4}
    \int_{x\le-ct\sqrt{\log t}}
    \om(t,z)\,dz
    \gtrsim_{m_0,R_0}1 \qquad \text{for all $t \ge T_4$ and $c \le c_2$}.
\end{equation}

Finally, by \eqref{eq sec5 t ge logt1}, \eqref{eq sec5 t ge logt2}, \eqref{eq sec5 t ge logt3} and \eqref{eq sec5 t ge logt4}, there exists
$ c_0>0$ and $T_0>0$, depending only on $m_0$ and $R_0$, such that the desired bounds
\eqref{eq:R2-positive-y-critical-mass} and
\eqref{eq:R2-positive-x-critical-mass} hold.

\end{proof}

\subsection{Growth of H\"older norms}

We now combine the enhanced dispersion in
Theorem~\ref{thm:R2-positive-sharp-moment-growth} with the
filamentation argument in~\cite{MR4350517} to obtain
norm growth for a large class of initial data.

\begin{theorem}
\label{thm:R2-Gevrey-norm-growth}
Let $\ep_0>0$ be the universal constant in
Theorem~\ref{thm:R2-positive-sharp-moment-growth}. For any $0<\sigma<1$, $\lambda>0$, $\delta>0$ and any nonzero initial vorticity $\om_{b}$ satisfying
\[
    0\le \om_{b}\in C_c^\infty(\R^2),
    \qquad     
    \|\om_{b}\|_{L^1(\R^2)}
    +
    \|\om_{b}\|_{L^\infty(\R^2)}
    \le \frac{\ep_0}{2},
\]
there exists a nonnegative perturbation
\(
 \om_{p}\in C_c^\infty(\R^2)
\)
with
\begin{equation}\label{eq:R2-Gevrey-small-perturbation}
\begin{aligned}
\| \om_{p} \|_{\mathcal G^\lambda_\sigma} 
\le \delta,
\end{aligned}
\end{equation}
such that the solution $ \om(t)$     with initial vorticity $ \om_{in}: = \om_{b} +\om_{p} $ satisfies for any $s>0$, 
\begin{equation}\label{eq:R2-Cxs-growth}
    \| \om(t)\|_{C_x^s}
    \gtrsim_{s, \om_{in}}
    (\log \langle t\rangle)^{\frac{s}{2}}
\end{equation}
and
\begin{equation}\label{eq:R2-Cys-growth}
    \|  \om(t)\|_{C_y^s}
    \gtrsim_{s, \om_{in}}
    \left(
        t\sqrt{\log \langle t\rangle}
    \right)^s.
\end{equation} 
\end{theorem}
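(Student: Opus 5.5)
The plan is a filamentation argument in the spirit of \cite{MR4350517}, with Theorem~\ref{thm:R2-positive-sharp-moment-growth} supplying the large-scale dispersion.

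\textbf{Choice of perturbation.} We take $\om_p$ to be a small nonnegative bump $\om_p=\eta\,\varphi(\cdot-z_*)$. Here $\varphi\in C_c^\infty$ is a fixed nonnegative Gevrey-$\sigma$ bump with $\varphi\equiv1$ on $B_1(0)$; such bumps exist since $\sigma<1$. The point $z_*$ lies outside $\Supp\om_b$, and $\eta>0$ is small. Then $\|\om_p\|_{\mathcal G^\lambda_\sigma}\le \eta\|\varphi\|_{\mathcal G^\lambda_\sigma}\le\delta$ and $\|\om_p\|_{L^1}+\|\om_p\|_{L^\infty}\le \ep_0/2$. So $\om_{in}=\om_b+\om_p$ is nonnegative, smooth, compactly supported, and satisfies all hypotheses of Theorem~\ref{thm:R2-positive-sharp-moment-growth}.

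The key structural feature is that $\om_{in}$ has a plateau: it equals a constant value on an open region, and it vanishes outside a compact set. Since $z_*\notin\Supp\om_b$, we have $\om_{in}=\eta$ on $B_1(z_*)$. We also have $\om_{in}=0$ on a neighbourhood of infinity. Fix a level $0<\beta<\eta$ attained only in the transition annulus of $\varphi$. The vorticity is transported by an area-preserving homeomorphism $\Phi_t$, so $\om(t)=\om_{in}\circ\Phi_t^{-1}$. Hence at every time $\om(t)$ takes the value $\eta$ on the connected set $\Phi_t(B_1(z_*))$, and the value $0$ on the unbounded connected set $\Phi_t(\{\om_{in}=0\})$. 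These two sets are separated by a closed curve $\Gamma_t=\Phi_t(\partial B_{r}(z_*))$ on which $\om(t)$ varies from $\eta$ to $0$.

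\textbf{Using the dispersion.} Apply Theorem~\ref{thm:R2-positive-sharp-moment-growth}, in the sense that a definite fraction of the total mass sits at $y\ge c_0\sqrt{\log t}$ and a definite fraction at $y\le -c_0\sqrt{\log t}$. I expect the main obstacle to be transferring this to the plateau region $\Phi_t(B_1(z_*))$ itself, rather than merely to the whole vorticity.

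My first attempt is to choose the perturbation so that it carries the relevant mass. Concretely, I would place the plateau so that the level set $\{\om_{in}\ge \beta\}$ is connected and contains a fixed fraction of the total mass, for instance by letting $\varphi$'s plateau surround $\Supp\om_b$ as a thin annulus. A cleaner variant uses the level set $\{\om(t)\ge \beta\}$ directly. The function $\om(t)$ is transported, its superlevel sets are images of those of $\om_{in}$, and if $\{\om_{in}\ge\beta\}$ is connected and contains mass exceeding $\|\om_{in}\|_{L^1}-c_0/2$, then the connected set $S_t=\{\om(t)\ge\beta\}$ must meet both half-planes $\{y\ge c_0\sqrt{\log t}\}$ and $\{y\le -c_0\sqrt{\log t}\}$. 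Arranging this with $\om_p$ Gevrey-small requires $\om_b$ to satisfy $\{\om_b>0\}$ connected after adding a thin, low positive Gevrey layer. Choosing $\beta$ below $\min$ of $\om_{in}$ on the bulk of the mass makes this consistent, and I would accept an $\om_{in}$-dependent construction here. Similarly, the horizontal statement \eqref{eq:R2-positive-x-critical-mass} forces $S_t$ to meet both $\{x\ge c_0t\sqrt{\log t}\}$ and $\{x\le -c_0t\sqrt{\log t}\}$.

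\textbf{Conversion to H\"older growth.} The set $S_t$ is connected and vertically long, with height at least $2c_0\sqrt{\log t}$, but its area is at most $\|\om_{in}\|_{L^1}/\beta$. Hence for most heights $y$ in that range, the horizontal slice $S_t\cap\{y\}$ has length $\lesssim (\sqrt{\log t})^{-1}$. On such a slice, $\om(t,\cdot,y)$ passes from $\ge\beta$ to $0$ within distance $\lesssim(\log t)^{-1/2}$, because the slice is nonempty and bounded, being a subset of a compactly supported function's support. An elementary one-dimensional interpolation then applies: a function bounded by $\|\om_{in}\|_{L^\infty}$ that drops by $\beta$ over length $\ell$ has $C^s$ norm $\gtrsim_s \beta\,\ell^{-s}$, for all $s>0$ and for integer $s$ via the mean value theorem on derivatives. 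This gives \eqref{eq:R2-Cxs-growth}.

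The same argument with the roles of $x$ and $y$ exchanged uses horizontal extent $\gtrsim t\sqrt{\log t}$ and bounded area. It gives vertical slices of length $\lesssim (t\sqrt{\log t})^{-1}$ and hence \eqref{eq:R2-Cys-growth}. For small $t$ the bounds hold trivially since $\om\not\equiv0$. The delicate point remains ensuring connectedness and a large-mass superlevel set for the perturbed data, which is where I would concentrate the effort.
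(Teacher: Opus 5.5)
There is a genuine gap, in two places. First, the perturbation is never actually fixed. Your initial choice, a bump at $z_*\notin\Supp\om_b$, fails for exactly the reason you identify. The mass $c_0$ that Theorem~\ref{thm:R2-positive-sharp-moment-growth} places in $\{\pm y\ge c_0\sqrt{\log t}\}$ may be carried entirely by $\om_b$, so nothing forces the bump itself to stretch. The repairs you suggest (an annular plateau, or connecting $\{\om_b\ge\beta\}$ with a Gevrey layer) are left unresolved.

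The missing idea is to take $\om_p$ of small amplitude but large spatial extent, so that two of its level curves, carrying distinct values, \emph{enclose} $\Supp\om_b$. The paper takes $\om_p=\ep G(\cdot/4R)$, where $G$ is a radial Gevrey bump with $G(1/2)>G(3/4)>0$ and $\Supp\om_b\subset B_R$. The constant $c_0$ is uniform in $\ep$, because $m_0,R_0$ are fixed by $\om_b$; one then takes $\ep$ so small that $\|\om_p\|_{L^1}\le c_0/2$. Instead of superlevel sets, the paper tracks the Lagrangian images $\Phi_t(B_R)\subset\Phi_t(B_{2R})\subset\Phi_t(B_{3R})$. These are connected for free, and $\Phi_t(B_R)\supset\Phi_t(\Supp\om_b)$ must meet both half-planes. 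Since $\om_b=0$ off $B_R$, $\om(t)$ equals $\ep G(1/2)$ on $\Phi_t(\p B_{2R})$ and $\ep G(3/4)$ on $\Phi_t(\p B_{3R})$. Your superlevel-set version can also be made to work: use a radially non-increasing $\eta\varphi$ with $\varphi\equiv 1$ on a ball containing $\Supp\om_b$, so that $\{\om_{in}\ge\beta\}$ is a ball for $\beta<\eta$.

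Second, the conversion to H\"older growth does not follow as written. A short $y$-slice of $S_t=\{\om(t)\ge\beta\}$ only tells you that $\om(t,\cdot,y)=\beta$ at the endpoints of its components. It forces no change of value over a short distance. Compact support says nothing about how far along that line $\om$ must go before reaching $0$; it may decay from $\beta$ to $0$ over a long interval.

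You need a second, larger set of bounded area whose slice is short at a height where the smaller, vertically spanning set is present. Possible choices are a lower superlevel set, the support of $\om(t)$, or, as in the paper, $\Phi_t(B_{3R})$ with area $|B_{3R}|$. The paper picks $y_t$ so that the slice of $\Phi_t(B_{3R})$ has length $\lesssim(\log t)^{-1/2}$, and a point $x_t$ with $(x_t,y_t)\in\Phi_t(B_R)$. It then takes the right endpoints $x_{2R}(t)\le x_{3R}(t)$ of the components containing $x_t$ of the slices of $\Phi_t(B_{2R})$ and $\Phi_t(B_{3R})$. These endpoints lie on $\Phi_t(\p B_{2R})$ and $\Phi_t(\p B_{3R})$ and are within $\lesssim(\log t)^{-1/2}$ of each other. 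This gives the H\"older quotient $\ep\,(G(1/2)-G(3/4))\,|x_{2R}(t)-x_{3R}(t)|^{-s}$.

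With these two repairs, your remaining steps match the paper: interpolation for $s>1$, the $x\leftrightarrow y$ swap using \eqref{eq:R2-positive-x-critical-mass}, and the trivial small-$t$ range.
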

\begin{proof}

\vspace{0.5em}
\noindent
\textbf{Step 1: Construction of perturbation.}

For any $0<\sigma<1$, fix $\sigma<\sigma'<1$ and a nonnegative radial profile $G \in C^\infty (\R^2) \cap \mathcal G_{\sigma'}^{\lambda'}$ for some $\lambda'>0$ such that $G(  1/2 ) > G(   3/4 )>0$ and $G(z) =0$ for $|z| \ge 1 $. The existence of such a bump function is well-known.

Next, choose $R>0$ such that
\[
\Supp\omega_b\subset B_R,
\]
and set 
\[
G_R(z):=G\left(\frac{z}{4R}\right).
\]
Since $\sigma'>\sigma$, we have $G \in  \mathcal G_\sigma^\lambda$ for any given $0<\lambda<\infty$, and thus
\[
G_R\in C_c^\infty(\mathbb R^2)\cap \mathcal G_\sigma^\lambda,
\qquad
\Supp G_R\subset B_{4R}.
\]

For $\ep>0$, define a family of initial data
\begin{equation}\label{eq:R2-Gevrey-small-perturbation data}
\omega_{ {in},\ep}
:=
\omega_{ b }+ \omega_{ p } =\omega_{ b }+ \ep G_R.
\end{equation}
 
For all sufficiently small $ \ep>0$, we have
\[
\|\omega_{ {in},\ep}\|_{L^1}
+
\|\omega_{ {in},\ep}\|_{L^\infty}
\le \ep_0 .
\]
Since $\omega_b\not\equiv0$, for all sufficiently small $ \ep>0$, the family
$\omega_{ {in},\ep}$ has a uniform positive lower bound on
its $L^1$ norm and a uniform upper bound on its second moment. Let $\om_{\ep}(t)$ be the solution of \eqref{eq:euler} with initial vorticity $\om_{{in},\ep}$.
By Theorem~\ref{thm:R2-positive-sharp-moment-growth}, there exist $c_0,T>0$, independent of $\ep$,
such that for every $t\ge T$,
\begin{equation}\label{eq: proof R2-Cs-growth 1}
\int_{y\ge c_0\sqrt{\log t}}\omega_\ep(t,z)\,dz\ge c_0,
\qquad
\int_{y\le -c_0\sqrt{\log t}}\omega_\ep(t,z)\,dz\ge c_0,
\end{equation}
and
\begin{equation}\label{eq: proof R2-Cs-growth 2}
\int_{x\ge c_0t\sqrt{\log t}}\omega_\ep(t,z)\,dz\ge c_0,
\qquad
\int_{x\le -c_0t\sqrt{\log t}}\omega_\ep(t,z)\,dz\ge c_0.
\end{equation}
Shrinking $ \ep$ if necessary, we may also assume
\[
 \|\omega_{ p } \|_{L^1}\le \frac12 c_0
\quad \text{and} \quad \| 
\omega_{ {p}} \|_{\mathcal G^\lambda_\sigma} \le \delta .
\]

\vspace{0.5em}
\noindent
\textbf{Step 2: Filamentation of the vorticity.}

\begin{figure}[ht]
\centering
\begin{tikzpicture}[x=1cm,y=1cm,>=Latex,thick]

\def\W{5.8}
\def\H{4.2}

\begin{scope}
  \draw[rounded corners=2pt] (0,0) rectangle (\W,\H);
  \node[font=\bfseries] at (0.5*\W,\H+0.35)
  {Initial data at $t=0$};

  \draw[->,gray!70]
  (0.45,0.55) -- (\W-0.35,0.55) node[right] {$x$};

  \fill[blue!20,opacity=.95]
    plot[smooth cycle,tension=0.85] coordinates {
      (0.95,2.05) (1.45,3.00) (2.55,3.40) (3.75,3.10)
      (4.60,2.45) (4.30,1.55) (3.25,1.05) (2.10,1.00)
      (1.25,1.40)
    };

  \fill[blue!70!black,opacity=.95]
    plot[smooth cycle,tension=0.9] coordinates {
      (2.55,2.05) (2.85,2.55) (3.35,2.70)
      (3.70,2.35) (3.60,1.90) (3.15,1.75)
      (2.75,1.82)
    };

  \node[blue!50!black] at (1.75,3.15) {$\omega_p$};
  \node[white] at (3.15,2.28) {$\omega_b$};

  \coordinate (xTwoL)   at (3.78,0.55);
  \coordinate (xThreeL) at (4.18,0.55);

  \coordinate (pTwoL)   at (3.78,1.95);
  \coordinate (pThreeL) at (4.18,1.95);

  \fill (xTwoL) circle (1.2pt);
  \fill (xThreeL) circle (1.2pt);

  \fill (pTwoL) circle (1.2pt);
  \fill (pThreeL) circle (1.2pt);

  \node[below=2pt,xshift=-6pt] at (xTwoL) {$x_{2R}$};
  \node[below=2pt,xshift= 6pt] at (xThreeL) {$x_{3R}$};

  \draw[dashed,gray!70] (xTwoL) -- (pTwoL);
  \draw[dashed,gray!70] (xThreeL) -- (pThreeL);
\end{scope}

\draw[->,line width=1pt] (6.35,2.10) -- (7.45,2.10);
\node[above] at (6.90,2.10) {\small evolve};

\begin{scope}[xshift=8cm]
  \draw[rounded corners=2pt] (0,0) rectangle (\W,\H);
  \node[font=\bfseries] at (0.5*\W,\H+0.35)
  {Solution at time $t\gg1$};

  \draw[->,gray!70]
  (0.45,0.55) -- (\W-0.35,0.55) node[right] {$x$};

  \begin{scope}[rotate around={58:(3.05,2.30)}]
    \fill[blue!20,opacity=.95]
    (3.05,2.30) ellipse (1.95 and 0.64);
  \end{scope}

  \begin{scope}[rotate around={58:(3.18,2.50)}]
    \fill[blue!70!black,opacity=.97]
    (3.18,2.50) ellipse (1.00 and 0.28);
  \end{scope}

  \node[blue!50!black] at (3.05,3.50) {$\omega_p(t)$};
  \node[white] at (3.20,2.58) {$\omega_b(t)$};

  \coordinate (xTwoR)   at (3.38,0.55);
  \coordinate (xThreeR) at (3.55,0.55);

  \coordinate (pTwoR)   at (3.38,2.04);
  \coordinate (pThreeR) at (3.55,2.04);

  \fill (xTwoR) circle (1.2pt);
  \fill (xThreeR) circle (1.2pt);

  \fill (pTwoR) circle (1.2pt);
  \fill (pThreeR) circle (1.2pt);

  \node[below=2pt,xshift=-7pt] at (xTwoR) {$x_{2R}$};
  \node[below=2pt,xshift= 7pt] at (xThreeR) {$x_{3R}$};

  \draw[dashed,gray!70] (xTwoR) -- (pTwoR);
  \draw[dashed,gray!70] (xThreeR) -- (pThreeR);
\end{scope}

\end{tikzpicture}

\caption{Schematic illustration for Theorem \ref{thm:R2-Gevrey-norm-growth}.
Left: the initial configuration consists of a small concentrated blob
$\omega_b$ and a larger, lighter perturbation $\omega_p$.
Right: at time $t\gg1$ under the transport and filamentation,
the distinguished points $x_{2R}$ and $x_{3R}$ are closer.}
\end{figure}

For the family of initial data $\om_{in,\ep}$ given by \eqref{eq:R2-Gevrey-small-perturbation data}, let $\Phi_t$ be the flow map associated with $\omega_\ep(t)$.
Since vorticity is transported,
\[
\omega_\ep(t,\Phi_t(z))
=
\omega_b (z)+\ep G_R(z).
\]
Moreover, the mass of $\ep G_R$ in any transported set is at most
$\ep\|G_R\|_{L^1}\le c_0/2$. Therefore, \eqref{eq: proof R2-Cs-growth 1} implies for any $t \ge T$
\begin{equation}\label{eq: proof R2-Cs-growth 33}
\Phi_t(\Supp\omega_b)
\cap
\{y\ge c_0\sqrt{\log t}\}
\neq\varnothing,
\end{equation}
and
\begin{equation}\label{eq: proof R2-Cs-growth 3}
\Phi_t(\Supp\omega_b)
\cap
\{y\le -c_0\sqrt{\log t}\}
\neq\varnothing.
\end{equation}

For $r>0$, write
\[
\Omega_r(t):=\Phi_t(B_r).
\]
Since $\Supp \omega_b\subset B_R$, it follows
from \eqref{eq: proof R2-Cs-growth 33}, \eqref{eq: proof R2-Cs-growth 3} and the connectedness of $\Omega_R(t)$ that its projection onto
the $y$-axis contains $[-c_0\sqrt{\log t},\,c_0\sqrt{\log t}]$, namely for any $t \ge T$,
\begin{equation}\label{eq: proof R2-Cs-growth 4}
[-c_0\sqrt{\log t},\,c_0\sqrt{\log t}] \subset \{y\in\mathbb R:\exists\,x\in\mathbb R
\text{ such that } (x,y )\in \Omega_{R}(t)\}. 
\end{equation}
On the other hand, Fubini and incompressibility give
\begin{equation}\label{eq: proof R2-Cs-growth 4b}
\int_{\R} \left|
\{x\in\R:(x,y )\in\Omega_{3R}(t)\}
\right| \, dy = |\Omega_{3R}(t)|=|B_{3R}| .
\end{equation}

By  \eqref{eq: proof R2-Cs-growth 4b}, for any $t \ge T$ there exists
\[
y_t\in[-c_0\sqrt{\log t},\,c_0\sqrt{\log t}]
\]
such that 
\begin{equation}\label{eq: proof R2-Cs-growth 5}
\left|
\{x\in\mathbb R:(x,y_t)\in\Omega_{3R}(t)\}
\right|
\lesssim
\frac{1}{\sqrt{\log t}}.
\end{equation}

Using \eqref{eq: proof R2-Cs-growth 4}, there exists $x_t\in\R$ such that $(x_t,y_t) \in \Omega_R(t) \subset \Omega_{2R}(t) \subset \Omega_{3R}(t)$. For $r=2R,3R$, let $I_r(t)$ be the connected component of
\begin{equation}\label{eq sec5 Ir}
\{x\in\mathbb R:(x,y_t)\in\Omega_r(t)\}
\end{equation}
containing $x_t$ and let $x_r(t)$ be its right endpoint. Then
\begin{equation}\label{eq: proof R2-Cs-growth 5b}
(x_{2R}(t),y_t)\in\Phi_t(\partial B_{2R}),
\qquad
(x_{3R}(t),y_t)\in\Phi_t(\partial B_{3R}),
\end{equation}
and hence, by \eqref{eq: proof R2-Cs-growth 5} and \eqref{eq sec5 Ir}, for any $t \ge T$
\begin{equation}\label{eq: proof R2-Cs-growth 6}
|x_{3R}(t)-x_{2R}(t)| \le |I_{3R}(t)|
\lesssim
\frac{1}{\sqrt{\log t}}.
\end{equation}

\vspace{0.5em}
\noindent
\textbf{Step 3: All-time norm growth.}

Since by the choice of $R>0$
\[
\omega_b=0
\qquad
\text{on } \R^2 \setminus B_{ R}  ,
\]
we have
\[
\omega_{ {in},\ep}
=
\ep G(1/2)
\quad
\text{on }\partial B_{2R}, \quad \text{and} \quad
\omega_{ {in},\ep}
=
\ep G(3/4)
\quad
\text{on }\partial B_{3R}.
\]
Moreover,
\[
G(1/2)>G(3/4)>0.
\]
Since the vorticity is transported along the flow, by \eqref{eq: proof R2-Cs-growth 5b} we have
\begin{equation}
\omega_\ep(t,x_{2R}(t),y_t) =  \ep G(1/2) \quad \text{and}\quad \omega_\ep(t,x_{3R}(t),y_t) =  \ep G(3/4 ). 
\end{equation} 

Therefore, given any
$0<s\le 1$, we have for any $t \ge T$
\begin{equation}\label{eq: proof R2-Cs-growth 7}
\begin{aligned}
\|\omega_\ep(t)\|_{C_x^s}
&\gtrsim
\frac{
\left|
\omega_\ep(t,x_{2R}(t),y_t)
-
\omega_\ep(t,x_{3R}(t),y_t)
\right|
}{
|x_{2R}(t)-x_{3R}(t)|^s
}\\
&=
\frac{
\ep|G(1/2)-G(3/4)|
}{
|x_{2R}(t)-x_{3R}(t)|^s
}
\\
&\gtrsim_{\omega_{ {in},\ep},s}
(\log t)^{s/2}.
\end{aligned}
\end{equation}

For $s>1$, the 1D interpolation inequality
\[
\|f\|_{C_x^1}
\lesssim_s
\|f\|_{L^\infty}^{1-\frac1s}
\|f\|_{C_x^s}^{\frac1s},
\]
together with \eqref{eq: proof R2-Cs-growth 7} for $s=1$ and conservation of the $L^\infty$ norm,
gives
\[
\|\omega_\ep(t)\|_{C_x^s}
\gtrsim_{\omega_{ {in},\ep},s}
(\log t)^{s/2}. 
\]

The vertical estimate follows in the same way from \eqref{eq: proof R2-Cs-growth 2}, by exchanging the roles of $x$ and $y$. 
 
\end{proof} 
\begin{remark}\label{rmk L2 based norm growth}
By an argument similar to that in \cite{MR4350517}, for any $1\le p<\infty$, one may also obtain
$$
\|\p_x\om(t)\|_{L^p} \gtrsim_{p, \om_{in}} \sqrt{\log t} \quad \text{and} \quad \|\p_y\om(t)\|_{L^p} \gtrsim_{p, \om_{in}} t \sqrt{\log t}. 
$$
\end{remark}

\section{Non-damping and tightness for negative vorticity in the whole plane}
\label{sec:plane-negative}
In Section \ref{sec:damping_wholespace}, we established that small nonnegative vorticity in the whole plane undergoes spatial evacuation, leading to velocity decay along a set of times of density one. We now turn to the complementary regime of non-positive vorticity, where the long-time dynamics exhibit the opposite behavior.

As in the infinite channel, Hamiltonian conservation prevents velocity decay outside the positive-vorticity regime. In the whole plane, however, the unbounded vertical geometry leads to a stronger conclusion: the planar conservation laws force non-positive perturbations to remain spatially tight.

This stark sign dichotomy again relies on  the conserved Hamiltonian on $\mathbb{R}^{2}$:
$$
\mathcal{H}[\omega]=-\frac{1}{4\pi}\iint_{\mathbb{R}^{2}\times\mathbb{R}^{2}}\log\vert{}z-z^{\prime}\vert{}\omega(t,z)\omega(t,z^{\prime})dzdz^{\prime}+\frac{1}{2}\int_{\mathbb{R}^{2}}y^{2}\omega(t,z)dz.
$$

\subsection{A uniform velocity lower bound}
Observe that if non-positive vorticity dispersed and evacuated compact sets, the large-scale spatial separation would force the logarithmic interaction energy to diverge.

 We use this to establish the vorticity trapping in Theorem \ref{thm:intro-R2-trapping}.

\begin{proposition}\label{prop:R2 negative trapping}
Consider non-positive initial data $\om_{in} \in L^1\cap L^\infty(\R^2)$ satisfying the assumptions in Theorem \ref{thm:intro-R2-trapping} and let  $\om(t)$ be the corresponding Yudovich solution.
 
Then 
\begin{equation}\label{eq:R2-negative-uniform-y-second-moment}
    \sup_{t\ge0}
    \int_{\R^2}y^2|\om(t,z)|\,dz
    <\infty,
\end{equation}
and there exists a center trajectory $t\mapsto z_{t}=(x_{t},y_{t})$  such that for every $\ep>0$, there is a radius $R_{\ep}>0$ for which
\begin{equation}\label{eq:prop:R2 negative trapping a}
    \int_{|z-z_t| \le R_{\ep}}
    |\om(t,z)|\,dz
    \ge
    \|\om_{in}\|_{L^1(\R^2)}-\ep \quad \text{for all } t\ge0.
\end{equation}
Moreover, for any $2< p \le \infty$, there exists $c_p>0$ depending only on $\om_{in}$ and $p$ such that
\begin{equation}\label{eq:prop:R2 negative trapping b}
  \| u(t)\|_{L^p(\R^2)} \ge c_p \quad \text{for all } \,\, t\ge0.
\end{equation}
    
\end{proposition}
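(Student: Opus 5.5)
Write $\om=-g$ with $g\ge0$, $m:=\|g(t)\|_{L^1}=\|\om_{in}\|_{L^1}>0$. The plan is to read the conserved planar Hamiltonian with the opposite sign from Section \ref{sec:damping_wholespace}. For non-positive vorticity,
\[
-\mathcal H[\om_{in}]=-\mathcal H[\om(t)]=\frac1{4\pi}\iint\log|z-z'|\,g(t,z)g(t,z')\,dzdz'+\frac12\int y^2 g(t,z)\,dz .
\]
Both terms now enter with the same sign. The assumption \eqref{eq:intro_fix_R2_barycenter} makes $\mathcal H[\om_{in}]$ finite, arguing as in \eqref{eq sec5 cin1}, and I take Hamiltonian conservation in the Yudovich class with finite second moment as given, as the paper does. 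Lemma \ref{lem:R2-log-energy-divergence} with $R=2$ bounds the logarithmic term from below by $-\frac{\pi}{2}\|g\|_{L^1}\|g\|_{L^\infty}$, uniformly in time. Hence $\frac12\int y^2 g(t)\le C_{in}$ for all $t$, which is \eqref{eq:R2-negative-uniform-y-second-moment}. Likewise the log energy is bounded above: $\iint\log|z-z'|\,g g\le C'_{in}$ for all $t$.

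Inserting this upper bound into Lemma \ref{lem:R2-log-energy-divergence} gives, for every $R\ge2$ and $t\ge0$,
\[
\sup_{z_0}\int_{B_R(z_0)}g(t,z)\,dz\;\ge\; m-\frac{C'_{in}+\frac{\pi}{2}m\|g\|_{L^\infty}}{m\log R}.
\]
So for any $\ep>0$ there is $R(\ep)$ and, at each time, some ball of radius $R(\ep)$ carrying mass at least $m-\ep$.

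The main point to handle is that these near-maximizing centers depend on $\ep$. I would fix one reference radius $R_*$ with $R_*=R(m/3)$, and choose $z_t$ (no measurability is needed) so that $\int_{B_{R_*}(z_t)}g(t)\ge 2m/3$. Now take $\ep<m/3$ and any ball $B_{R(\ep)}(z')$ of mass $\ge m-\ep$. The two mass bounds add to more than $m$, so the balls must intersect. Therefore $B_{R(\ep)}(z')\subset B_{R(\ep)+2R_*}(z_t)$, and setting $R_\ep:=R(\ep)+2R_*$ proves \eqref{eq:prop:R2 negative trapping a}. The case $\ep\ge m/3$ is covered by $R_*$ itself.

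For \eqref{eq:prop:R2 negative trapping b}, cover $B_{R_*}(z_t)$ by $N\lesssim R_*^2$ unit balls. Then $\sup_{z_0}\int_{|z-z_0|\le1}|\om(t)|\ge 2m/(3N)=:\kappa>0$ for every $t$. Suppose $\inf_t\|u(t)\|_{L^p}=0$ for some admissible $p$. Pick $t_n$ with $\|u(t_n)\|_{L^p}\to0$. The sequence $\om(t_n)$ has fixed sign and uniformly bounded $L^1\cap L^\infty$ norms. Proposition \ref{prop:damping-circulation-equivalence} then forces uniform unit-ball evacuation, contradicting the lower bound $\kappa$. Hence $\|u(t)\|_{L^p}\ge c_p>0$ for all $t$. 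The only delicate inputs are the justification of Hamiltonian conservation and the finiteness of moments for Yudovich data, which I take as granted, consistent with their use in Lemma \ref{lem:R2-logarithmic-y2-upper}.
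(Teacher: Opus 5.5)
Your proposal is correct and follows the paper's proof essentially step by step. You use the same-sign Hamiltonian identity together with Lemma \ref{lem:R2-log-energy-divergence} for both the moment bound and tightness (directly rather than by contradiction), the intersecting-balls trick to make the center $\ep$-independent, and Proposition \ref{prop:damping-circulation-equivalence} for the velocity lower bound. One small slip: if $B_{R(\ep)}(z')$ meets $B_{R_*}(z_t)$, the triangle inequality gives only $B_{R(\ep)}(z')\subset B_{2R(\ep)+R_*}(z_t)$, not $B_{R(\ep)+2R_*}(z_t)$, so take $R_\ep:=2R(\ep)+R_*$ (the paper uses $2R_{\ep_0}+2R_\ep$).
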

\begin{proof}

We first prove the vorticity trapping. The uniform lower bounds for the velocity then follow as a consequence.

\vspace{0.5em}
\noindent
\textbf{Bounded moment.}

Since $\om(t)\le0$, the conservation of the Hamiltonian gives
\begin{equation}
\begin{aligned}
 \int_{\R^2}y^2|\om(t,z)|\,dz & = -2\mathcal{H}(\om (t))   - \frac{1}{2\pi} \iint
\log|z-z'|
\om(t ,z)\om(t ,z')\,dzdz' \\
&  = -2 \mathcal{H}(\om_{in}  )   - \frac{1}{2\pi} \iint
\log|z-z'|
\om(t ,z)\om(t ,z')\,dzdz'.  \\
\end{aligned}
\end{equation}
By Lemma \ref{lem:R2-log-energy-divergence} (taking, for instance, $R=3$), we see that
$$
\iint
\log|z-z'|
\om(t ,z)\om(t ,z')\,dzdz' \ge -C_{in}
$$
for some $C_{in}>0$ depending only on $\om_{in}$. Thus, $\sup_{t\ge0}
    \int_{\R^2}y^2|\om(t,z)|\,dz <\infty$.

\vspace{0.5em}
\noindent
\textbf{Vorticity trapping.}
We first claim that for every $\ep>0$, there exists $R_{\ep}>0$ such that for every $t\ge0$, there exists $z_{t,\ep}\in\R^2$ satisfying
\begin{equation}\label{eq:proof of thm:intro-R2-nondamping 2}
    \int_{B_{R_{\ep}}(z_{t,\ep})}
    |\om(t,z)|\,dz
    \ge
    \|\om_{in}\|_{L^1}-\ep.
\end{equation}
Suppose this claim does not hold. Then  there exist $\ep>0$, $R_n\to+\infty$ and $t_n\ge0$ such that
\begin{equation}\label{eq:vorticity-trapping-contradiction}
    \sup_{ z_0\in\R^2}
    \int_{B_{R_n}(z_0)}
    |\om(t_n,z)|\,dz
    \le
    \|\om_{in}\|_{L^1}-\ep.
\end{equation}

Since $\om \le 0$ is sign-definite, 
$$
\begin{aligned}
\mathcal H[\om_{in}]
& =
-\frac{1}{4\pi}
\iint
\log|z-z'|
\om(t_n,z)\om(t_n,z')\,dzdz'
+
\frac12
\int
y^2\om(t_n,z)\,dz \\
& \le  
-\frac{1}{4\pi}
\iint
\log|z-z'|
\om(t_n,z)\om(t_n,z')\,dzdz' ,
\end{aligned}
$$
and then we apply Lemma \ref{lem:R2-log-energy-divergence}  and \eqref{eq:vorticity-trapping-contradiction}  to obtain
$$
\begin{aligned}
\mathcal H[\om_{in}] 
& \le   
\frac{1}{4\pi} \log R_n \, \|\om(t_n)\|_{L^1}
\left(
\sup_{ z_0\in\R^2}
\int_{B_{R_n}( z_0)}|\om(t_n,z)|\,dz -\|\om(t_n)\|_{L^1}
\right)\\
&\qquad \qquad  + \frac{\pi}{2}\|\om(t_n) \|_{L^1 }
\|\om(t_n) \|_{L^\infty }   \\
& \le -\frac{1}{4\pi }
\ep\|\om_{in}\|_{L^1}
\log R_n
+ C_{in},
\end{aligned}
$$
where the constant $C_{in}>0$ depends on the initial Yudovich norms.

This implies that $\mathcal H[\om_{in}] \to -\infty$ as $n \to  \infty$, a contradiction. Therefore, the claim \eqref{eq:proof of thm:intro-R2-nondamping 2} holds.

It remains to show that the point $z_{t,\ep}$ obtained above can be chosen independent of $\ep>0$. For every $t\ge0$, define
$$
    z_t=z_{t,\ep_0 } \quad \text{with} \quad\ep_0  = \frac14\|\om_{in} \|_{L^1}
$$
such that  \eqref{eq:proof of thm:intro-R2-nondamping 2} implies
$$
    \int_{B_{R_{\ep_0}}(z_t)}
    |\om(t,z)|\,dz
    \ge
    \frac34\|\om_{in}\|_{L^1}.
$$
For any $\ep\ge\ep_0$, the same ball already satisfies the required estimate. If $0<\ep< \ep_0 $, let $z_{t,\ep}$ be the point and $R_{\ep}$ be the radius in \eqref{eq:proof of thm:intro-R2-nondamping 2}. Since
$$
    \frac34\|\om_{in}\|_{L^1}
    +
    \|\om_{in}\|_{L^1}-\ep
    >
    \|\om_{in}\|_{L^1},
$$
the balls $B_{R_{ \ep_0 }}(z_t)$ and $B_{R_{\ep}}(z_{t,\ep})$ must intersect. Thus, by taking $\widetilde{R}_{\ep}=2 R_{\ep_0}+2R_{\ep}$ we have
$$
    \int_{B_{\widetilde{R}_{\ep}}(z_t)}
    |\om(t,z)|\,dz
    \ge \int_{B_{R_{\ep}}(z_{t,\ep})}
    |\om(t,z)|\,dz \ge
    \|\om_{in}\|_{L^1}-\ep,
$$
which completes the proof.

\noindent
\textbf{Non-damping velocity.}

Suppose for contradiction that there exist $2<p \le \infty$ and a sequence $t_n \to +\infty$ such that 
$$
 \|u(t_n)\|_{L^p} \to 0 \quad \text{as} \quad n \to \infty.
$$

Since $\om \le 0$ is sign-definite, for $\ep=\frac12\|\om_{in}\|_{L^1}$, it follows from Proposition \ref{prop:damping-circulation-equivalence} that  
\begin{equation}\label{eq 611}\begin{aligned}
\lim_{n\to\infty}
        \sup_{z_0 }
        \int_{B_{R_{\ep}}( z_0)}|\om(t_n,z')|\,dz'\le  C (R_{\ep}+1)^2\lim_{n\to\infty}
        \sup_{z_0\in\R^2}
        \int_{B_1( z_0)}|\om(t_n,z')|\,dz'
        =0,
\end{aligned}
\end{equation}
which contradicts \eqref{eq:proof of thm:intro-R2-nondamping 2} and hence completes the proof.

\end{proof}

\subsection{Dynamics of the concentration center \texorpdfstring{$z_t$}{zt}}

Next, we derive  asymptotics of the concentration center $z_t$ given by Proposition  \ref{prop:R2 negative trapping}. The result below shows that its vertical component remains bounded and that its asymptotic horizontal speed is determined by the  vertical moment.

\begin{proposition}\label{prop:R2 z_t dynamics}
Let $z_t=(x_t,y_t)\in\R^2$ be the point obtained in Proposition \ref{prop:R2 negative trapping}.
Then  
\begin{equation}\label{eq sec6 control xt yt}
    \lim_{t \to \infty} \left|\frac{x_t}{t}-v_{in} \right|=0
    \qquad
    \text{and}
    \qquad
    \limsup_{t \to \infty} |y_t| <\infty,
\end{equation}
where $v_{in} = \frac{ \int_{\R^2} y\om_{in}(z)\,dz}{\int_{\R^2} \om_{in}(z)\,dz}$ denotes the vertical barycenter of $\om_{in}$.
  
\end{proposition}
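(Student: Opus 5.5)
The proof will compare the concentration center $z_t$ with the barycenter of $\om(t)$, using the conserved moments
\[
\int y\,\om(t,z)\,dz=\int y\,\om_{in}\,dz,
\qquad
\int x\,\om(t,z)\,dz=\int x\,\om_{in}\,dz+t\int y\,\om_{in}\,dz .
\]
The second identity is \eqref{eq:R2-x-momentum}, whose derivation uses only the Biot--Savart antisymmetry and not the sign of $\om$. Set $m=\|\om_{in}\|_{L^1}=-\int\om_{in}\,dz>0$. Then the barycenter $b_t=(b^x_t,b^y_t)$ of $|\om(t)|$ satisfies $b^y_t=v_{in}$ for all $t$ and $b^x_t=b^x_0+v_{in}t$.

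It therefore suffices to show $|y_t-v_{in}|\lesssim 1$ and $|x_t-b^x_t|=o(t)$.

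\textbf{Vertical component.} Take $\ep=m/2$ in \eqref{eq:prop:R2 negative trapping a}, so at least mass $m/2$ of $|\om(t)|$ lies in $B_{R}(z_t)$ with $R=R_{m/2}$. On that ball $|y|\ge |y_t|-R$. Hence $\frac m2(|y_t|-R)_+^2\le \sup_s\int y^2|\om(s)|\,dz$, which is finite by \eqref{eq:R2-negative-uniform-y-second-moment}. This gives $\limsup|y_t|<\infty$ directly.

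\textbf{Horizontal component.} Fix $\ep>0$ and split
\[
m(b^x_t-x_t)=\int (x-x_t)|\om(t)|\,dz
\]
into the contribution from $B_{R_\ep}(z_t)$, which is at most $mR_\ep$, and the contribution from the complement, which carries mass at most $\ep$. To control the complement I would bound the horizontal spread by Cauchy--Schwarz,
\[
\int_{B^c}|x-x_t||\om|\,dz\le \ep^{1/2}\Big(\int |x-x_t|^2|\om|\,dz\Big)^{1/2}.
\]
This requires an a priori estimate $\int |x-b^x_t|^2|\om(t)|\,dz\lesssim \langle t\rangle^2$.

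To get it, differentiate the centered horizontal variance:
\[
\frac{d}{dt}\int (x-b^x_t)^2|\om|\,dz=2\int (x-b^x_t)\big(y-v_{in}+u^x\big)|\om|\,dz .
\]
Cauchy--Schwarz then gives $\frac{d}{dt}\sqrt{\mathrm{Var}_x}\lesssim \big(\int(y-v_{in})^2|\om|\big)^{1/2}+\|u\|_{L^\infty}\lesssim 1$. This uses the uniform $y^2$ bound and $\|u\|_{L^\infty}\lesssim\|\om\|_{L^1\cap L^\infty}$ from \eqref{eq:R2-velocity-bound}. Hence $\mathrm{Var}_x(t)\lesssim (1+t)^2$.

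Inserting this, I also need $|x-x_t|\le|x-b^x_t|+|b^x_t-x_t|$. The second term is absorbed by first running the argument with a fixed $\ep=m/4$ to get $|x_t-b^x_t|\lesssim 1+t$. The refined estimate then reads
\[
|x_t-b^x_t|\lesssim R_\ep+\ep^{1/2}(1+t).
\]
Dividing by $t$ and letting $t\to\infty$ and then $\ep\to0$ yields $x_t/t\to v_{in}$.

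I expect the main obstacle to be this horizontal step. One must check that the variance estimate is valid for Yudovich solutions with finite second moment, for instance by justifying moment propagation with cutoffs, and that the weight $\sqrt\ep$ genuinely comes from the mass outside the ball. The vertical bound and the momentum identities are routine.
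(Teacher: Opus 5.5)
Your proposal is correct and is essentially the paper's proof. The paper likewise uses the uniform $y^2$-moment bound for $y_t$, a Cauchy--Schwarz/Gr\"onwall bound $\int x^2|\om|\lesssim(1+t)^2$, and a ball/complement split with the $\ep^{1/2}$ weight on the escaped mass. The remaining differences are cosmetic: the paper first normalizes the barycenter to the origin by Galilean invariance instead of tracking $b_t$, and it controls $\int x u^x|\om|$ by symmetrization rather than through $\|u\|_{L^\infty}$. It also writes $x_t\int_B|\om|=\int_B(x_t-x)|\om|-\int_{B^c}x|\om|$, which gives $(m-\ep)|x_t|\le mR_\ep+\ep^{1/2}(\int x^2|\om|)^{1/2}$ directly and makes your preliminary absorption step with $\ep=m/4$ unnecessary.
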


\begin{proof}
By the Galilean invariance of the equation, after replacing $\om$ by
$$
    \widetilde\om(t,x,y)
    :=
    \om(t,x+\bar x_0+t\bar y_0,y+\bar y_0),
$$
we may assume without loss of generality that
$$
    \int_{\R^2}x\om_{in}(z)\,dz
    =
    \int_{\R^2}y\om_{in}(z)\,dz
    =
    0.
$$
Consequently,
\begin{equation}\label{eq sec6 barcenter}
    \int_{\R^2}x\om(t,z)\,dz
    =
    \int_{\R^2}y\om(t,z)\,dz
    =
    0 \quad \text{for all} \quad t \ge 0.
\end{equation}

\vspace{0.5em}
\noindent
\textbf{Control of $y_t$.} A direct calculation gives
\begin{equation}\label{eq sec6 1}
\begin{aligned}
\frac{d}{dt}
\int_{\R^2}x^2|\om(t,z)|\,dz
&=
2\int_{\R^2}xy|\om(t,z)|\,dz
+
2\int_{\R^2}xu^x(t,z)|\om(t,z)|\,dz.
\end{aligned}
\end{equation}
By symmetrization,
\begin{equation}\label{eq sec6 2}
    \left|
    \int_{\R^2}xu^x(t,z)|\om(t,z)|\,dz
    \right|
    \lesssim \|\om(t)\|_{L^1}^2 \lesssim 1.
\end{equation}
Using \eqref{eq:R2-negative-uniform-y-second-moment} and Cauchy's inequality, we also obtain 
$$
\left|\int xy |\om(t,z)|\,dz\right| \le \left(\int x^2 |\om(t,z)|\,dz  \int y^2|\om(t,z)|\,dz \right)^{1/2}\lesssim \left(\int x^2 |\om(t,z)|\,dz \right)^{1/2}.
$$
Together with \eqref{eq sec6 1} and \eqref{eq sec6 2}, Gr\"onwall's inequality gives
\begin{equation}\label{eq sec6 control of second momentum x}
    \int_{\R^2}x^2|\om(t,z)|\,dz
    \lesssim (1+t)^2.
\end{equation}
For any
$
0<\ep< \frac12\|\om_{in}\|_{L^1},
$
let $R_{\ep}>0$ and $z_t=(x_t,y_t)$ be the constant and point in Proposition \ref{prop:R2 negative trapping}. By \eqref{eq:R2-negative-uniform-y-second-moment},
$$
    \frac{\|\om_{in}\|_{L^1}}{2}(|y_t|-R_{\ep})_+^2
    \le
    \int_{\R^2}y^2|\om(t,z)|\,dz \lesssim 1,
$$
which implies that
$
    y_t=O(1).
$

\vspace{0.5em}
\noindent
\textbf{Control of $x_t$.}

Since $\om(t) \le 0$, it follows from \eqref{eq sec6 barcenter} that
$$
    x_t\int_{|z-z_t| \le R_{\ep}} |\om(t,z)|\,dz=\int_{|z-z_t| \le R_{\ep}} (x_t-x) |\om(t,z)|\,dz -\int_{|z-z_t| > R_{\ep}} x |\om(t,z)|\,dz.
$$
For any $0<\ep<\|\om_{in}\|_{L^1}$,  \eqref{eq sec6 control of second momentum x}, Proposition \ref{prop:R2 negative trapping}, and H\"older's inequality yield 
\begin{equation}\label{eq sec6 control xt1}
    \begin{aligned}
        \left(\|\om_{in}\|_{L^1}-\ep\right)|x_t|
&\le
\|\om_{in}\|_{L^1}R_{\ep}
+
\int_{|z-z_t| > R_{\ep}}
|x||\om(t,z)|\,dz \\
&\le\|\om_{in}\|_{L^1}R_{\ep}+ \left(\int_{|z-z_t|> R_{\ep}}|\om(t,z)|\,dz \int_{\R^2} x^2 |\om(t,z)| \,dz\right)^{1/2}\\
&\lesssim \|\om_{in}\|_{L^1}R_{\ep}+ \ep^{1/2} (1+t),
    \end{aligned}
\end{equation}
which implies
$$
    \limsup_{t\to\infty}
    \frac{|x_t|}{t}
    \lesssim
    \ep^{1/2}.
$$
Since $\ep>0$ can be taken arbitrarily small, we conclude that
$\frac{x_t}{t}\to0$ as $ t\to\infty$.

\end{proof}

\subsection{Improved non-damping and proof of Theorem \ref{thm:intro-R2-trapping}}\label{subsec:R2 nondamping main proof}

The first point in Theorem \ref{thm:intro-R2-trapping} has been proved by Proposition \ref{prop:R2 negative trapping} and Proposition \ref{prop:R2 z_t dynamics}. To conclude its proof, we only need to demonstrate its non-damping statement.

Below we prove something substantially stronger: each component of the velocity, as well as each component of its first derivative, remains bounded away from zero.

\begin{proposition}
\label{cor:R2-horizontal-velocity-derivative-nondamping}
Let $\om(t)$ be the Yudovich solution to \eqref{eq:euler} with nonzero initial vorticity $\om_{in}$ satisfying all the assumptions of Theorem \ref{thm:intro-R2-trapping}.

Let $c >|v_{in}|$, then for any $2 < p \le \infty$ 
\begin{equation}\label{eq:thm:intro-R2-nondamping}
\liminf_{t\to\infty} \min_{\alpha\in\{x,y\}} \|u^\alpha(t)\|_{L^p(\{|z|\leq ct\})} >0,
\end{equation}
and  for $1<p<\infty$
\begin{equation} 
\liminf_{t\to\infty} \min_{\alpha,\beta\in\{x,y\}} \|\partial_\alpha u^\beta(t)\|_{L^p(\{|z|\leq ct\})} >0. 
\end{equation} 
\end{proposition}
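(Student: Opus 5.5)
The plan is to argue by contradiction along a sequence and to use the trapping from Proposition \ref{prop:R2 negative trapping}, the center dynamics from Proposition \ref{prop:R2 z_t dynamics}, and the weak form of the Biot--Savart relation. Suppose that for some component there are $t_n\to\infty$ with $\|u^\alpha(t_n)\|_{L^p(|z|\le ct_n)}\to0$, or $\|\p_\alpha u^\beta(t_n)\|_{L^p(|z|\le ct_n)}\to0$. Set $\widetilde\om_n(z):=\om(t_n,z+z_{t_n})$. Then $\widetilde\om_n\le0$, with uniform $L^1\cap L^\infty$ bounds and total mass $\|\om_{in}\|_{L^1}$. By \eqref{eq:prop:R2 negative trapping a} the sequence is tight, so along a subsequence $\widetilde\om_n\rightharpoonup\om_\infty$ weakly-$*$ in $L^\infty$, with $\om_\infty\le0$ and $\int\om_\infty=-\|\om_{in}\|_{L^1}\neq0$, where tightness is what prevents loss of mass. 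Since $c>|v_{in}|$, Proposition \ref{prop:R2 z_t dynamics} gives $|z_{t_n}|\le c't_n$ for some $c'<c$. Hence the ball $\{|z|\le ct_n\}$ contains $B_{(c-c')t_n}(z_{t_n})$, and after translation every fixed ball $B_L(0)$ eventually lies in the observation region. The translated velocities $\widetilde u_n$ therefore tend to zero in $L^p(B_L)$ (or $\nabla\widetilde u_n\to0$ in $L^p(B_L)$) for each fixed $L$.

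The second step identifies the limit. The kernel is locally integrable and has the tail $|z-z'|^{-1}$, and the family is tight with uniform $L^1\cap L^\infty$ bounds. It follows that $\widetilde u_n\to u_\infty=K*\om_\infty$ locally uniformly. Also $\nabla\widetilde u_n\rightharpoonup\nabla u_\infty$ in $L^p_{loc}$ for $1<p<\infty$, by Calder\'on--Zygmund bounds and the splitting of near and far parts. Thus the assumed smallness forces $u_\infty^\alpha\equiv0$, respectively $\p_\alpha u_\infty^\beta\equiv0$, on all of $\R^2$.

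The last step derives a contradiction from $\om_\infty\in L^1\cap L^\infty$ with $\int\om_\infty\neq0$, using the far-field expansion $u_\infty(z)=\frac{m}{2\pi}\frac{z^\perp}{|z|^2}+O(|z|^{-2})$ where $m=\int\om_\infty\ne0$; a tail argument gives this for $L^1$ data in an averaged sense on large annuli. Each component $u^x_\infty$ and $u^y_\infty$ is then nonzero on a large annulus, which contradicts $u_\infty^\alpha\equiv0$. For derivatives, $\p_\alpha u^\beta_\infty$ has leading term $\frac{m}{2\pi}\p_\alpha\frac{(z^\perp)_\beta}{|z|^2}$, which is a nonzero homogeneous function of degree $-2$ for each choice of $\alpha$ and $\beta$. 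One can check this directly: $\p_x\frac{-y}{|z|^2}=\frac{2xy}{|z|^4}$, $\p_y\frac{-y}{|z|^2}=\frac{y^2-x^2}{|z|^4}$, and similarly for the others. Alternatively, and more robustly, $\p_\alpha u^\beta_\infty\equiv0$ together with $\operatorname{div}u_\infty=0$ and decay at infinity forces a linear relation among the components of $u_\infty$. This makes $u_\infty$ a function of one variable, or constant along lines, and decay then gives $u_\infty\equiv0$ and hence $\om_\infty=0$.

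The main obstacle is making the far-field and limiting step rigorous with only $L^1\cap L^\infty$ control and no moment of $\om_\infty$. For this I would test against smooth compactly supported functions. For example, $\p_x u^x\equiv0$ implies that $u^x$ is independent of $x$, and since $u^x(\cdot,y)\to0$ as $|x|\to\infty$ for a.e.\ $y$ ($u\in L^q$ for $q>2$), it follows that $u^x\equiv0$. Then $\om_\infty=-\p_y u^x+\p_x u^y=\p_x u^y$ has zero integral along a.e.\ horizontal line, so $\int\om_\infty=0$, a contradiction. The same one-dimensional argument handles each component and each derivative, and it avoids asymptotic expansions entirely. I would use this route.
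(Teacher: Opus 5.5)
Your argument is correct, but it follows a different route from the paper's.

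\textbf{The paper's route.} The paper never passes to a limit. At each fixed time it pairs $\om(t)$ with explicit test functions centered at $z_t$.
\begin{itemize}
\item For $u^x$, it expresses $-\int\om\,\eta_t$ through $u^x\p_y\eta_t$ and $u^y\p_x\eta_t$, where $\eta_t$ is a cutoff elongated by a factor $N$ in $x$. The $u^y$ term is then negligible by the uniform $L^3$ bound and $\|\p_x\eta_t\|_{L^{3/2}}\lesssim N^{-1/3}$.
\item For the derivatives, it builds $\phi_t$ with $\p_x^2\phi_t=1$ (resp.\ $\p_{xy}\phi_t=1$) on $B_{R_\ep}(z_t)$ and integrates by parts: $\int\om\,\p^2\phi_t=\int\p^2\psi\,\Delta\phi_t$.
\end{itemize}
This is elementary and quantitative. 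It gives explicit bounds such as $\|u^x(t)\|_{L^p(\Supp\eta_t)}\gtrsim\|\om_{in}\|_{L^1}N^{-1/p'}$ on a box of fixed size around $z_t$, valid at every time. Proposition \ref{prop:R2 z_t dynamics} is used only to place that box inside $\{|z|\le ct\}$.

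\textbf{Your route.} Your compactness--rigidity argument gives no explicit constant. What it buys is a clean isolation of the mechanism. By tightness, every limit profile modulo translation is a nonzero, nonpositive $L^1\cap L^\infty$ vorticity. A single Liouville-type lemma then shows that no component of its velocity or velocity gradient can vanish identically. This treats all six quantities uniformly. You are also right to prefer the one-dimensional Liouville route over the far-field expansion, which would need a moment bound on $\om_\infty$ that tightness does not supply.

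\textbf{Two simplifications.}
\begin{itemize}
\item The Calder\'on--Zygmund step is unnecessary. Locally uniform convergence of $\widetilde u_n$ already gives distributional convergence of $\nabla\widetilde u_n$. Together with $\|\p_\alpha\widetilde u^\beta_n\|_{L^p(B_L)}\to0$, this forces $\p_\alpha u^\beta_\infty=0$ on each $B_L$.
\item Once $u^x_\infty\equiv0$, you can skip the line-integral argument. Since $\operatorname{div}u_\infty=0$, you get $\p_yu^y_\infty=0$, so $u^y_\infty$ depends only on $x$. Because $u^y_\infty\in L^q(\R^2)$, this forces $u^y_\infty\equiv0$, hence $\om_\infty=0$, contradicting its nonzero mass.
\end{itemize}
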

\begin{proof}

\vspace{0.5em}
\noindent
\textbf{Zero-order velocity non-damping.} 

We only demonstrate $u^x$. Let $\eta \ge 0$ be a standard cut-off function such that $\eta(s)=1$ when $|s| \le 1$ and $\eta(s)=0$ when $|s| \ge 2$. 

Choose $\ep=\frac{1}{2}\|\om_{in}\|_{L^1}$ and set $\eta_{t}(z)=\eta\left(   \frac{ x-x_t }{N R_\ep}   , \frac{ y-y_t }{R_\ep }   \right)$ for some large $N \ge 1$ to be determined. Using the vorticity trapping from Proposition \ref{prop:R2 negative trapping} and $\om=\p_x u^y-\p_y u^x$, we obtain for any $1 \le p \le \infty$
\begin{equation}\label{eq sec6 lower bound for lp u}
\begin{aligned}
\frac{1}{2}\|\om_{in}\|_{L^1} & \le -\int_{\R^2} \om(t,z)\eta_t(z) \,dz \\
& \leq \|u^x(t)\|_{L^p(\Supp(\eta_t))} \|\p_y \eta_t\|_{L^{p'}(\R^2)} + \|u^y(t)\|_{L^3  } \|\p_x \eta_t\|_{L^{3/2}}.
\end{aligned}
\end{equation}
Since $ \sup_t \| u(t)\|_{L^p}<\infty$ for any $2< p\le \infty $, $\|\p_x \eta_t\|_{L^{p'}} \lesssim 1 $, and
$$
\|\p_x \eta_t\|_{L^{3/2}} \lesssim N^{-1 + \frac{2}{3}}   \quad \text{and }\quad \|\p_y \eta_t\|_{L^{p'}} \lesssim N^{  \frac{1}{p'}},
$$
we can fix a sufficiently large $N \ge 1 $ (independent of $t$) so that  $\|u^y(t)\|_{L^3 } \|\p_x \eta_t\|_{L^{3/2}} \le \frac{1}{4}\|\om_{in}\|_{L^1}$ for all times. Hence, \eqref{eq sec6 lower bound for lp u} yields 
\begin{equation}\label{eq sec6 lower bound for lp u 2}
\begin{aligned}
 \|\om_{in}\|_{L^1}  N^{-\frac{1}{p'}}      \lesssim \|u^x(t)\|_{L^p(\Supp(\eta_t))}.  
\end{aligned}
\end{equation}
Since $c>|v_{in}|$, by Proposition \ref{prop:R2 z_t dynamics},   $ \Supp(\eta_t) \subset \{|z| \le ct\}$ for all sufficiently large $t$. It follows  that
$$
\liminf_{t \to \infty}\|u^x(t)\|_{L^p(|z|\le ct)} \ge \liminf_{t \to \infty}\|u^x(t)\|_{L^p(\Supp(\eta_t))} \gtrsim 1.
$$
The lower bound for $u^y$ follows by a nearly identical argument, by suitably modifying the anisotropic cutoff $\eta_t$.

\vspace{0.5em}
\noindent
\textbf{First-order non-damping.} 

Let $\psi=\Delta^{-1}\om$. It suffices to prove each component of the Hessian of the stream function $ D^2 \psi $ has a lower bound in $L^p$, $1<p<\infty$. We only demonstrate the lower bound for $\p_x^2 \psi$  and $\p_{xy}\psi$.

For $\p_x^2\psi$, fix $\ep=\frac{1}{4}\|\om_{in}\|_{L^1}$, choose an even function $g\in C_c^\infty(\R)$ satisfying
\begin{equation}\label{eq sec6 def of g}
    g=1\quad\text{on }[-R_{\ep},R_{\ep}],
    \qquad
    |g|\le1,
    \qquad
    \int_{\R}g(s)\,ds=0.
\end{equation}
Define
\[
    f(x):=\int_{-\infty}^x(x-s)g(s)\,ds.
\]
It follows that $f\in C_c^\infty(\R)$ and $f''=g$. Let
$\chi\in C_c^\infty(\R)$ satisfy
\[
    0\le\chi\le1,
    \qquad
    \chi=1\quad\text{on }[-R_{\ep},R_{\ep}]
\]
and define
\(
    \phi_t(x,y):=f(x-x_t)\chi(y-y_t).
\)
Then we have
\begin{equation}\label{eq sec6 def of test phi}
\p_x^2 \phi_t =1 \quad \text{on $B_{R_\ep }(z_t)$}, \quad \|\p_x^2 \phi_t\|_{L^\infty} \leq 1 \quad \text{and} \quad \|\Delta \phi_t\|_{L^\infty} \lesssim 1.
\end{equation}

Thus, Proposition \ref{prop:R2 negative trapping} gives
\begin{equation}
\begin{aligned}
- \int_{\R^2}
\om(t,z)\p_{x}^2\phi_t (z)\,dz & \ge  \int_{B_{R_\ep}(z_t)} |\om| \,dz- \int_{\R^2\setminus B_{R_\ep}(z_t)} |\om|\,dz  \\
& \ge \frac{1}{2}\|\om_{in}\|_{L^1}.  
\end{aligned}
\end{equation}
On the other hand, integration by parts  yields
\begin{equation}
 \left| \int_{\R^2}
\om(t,z)\p_{x}^2\phi_t (z)\,dz \right|   = \left| \int_{\R^2}
\p_{x}^2 \psi(t,z) \Delta \phi_t (z)\,dz\right| \leq \| \p_x^2 \psi \|_{L^p(\Supp(\phi_t))} \| \Delta\phi_t \|_{L^{p'}}.
\end{equation}
Thus the lower bound for $\p_x^2 \psi = \p_x u^y$ follows.

For $\p_{xy}\psi$, we reuse the same set of variable names $f,g,\phi$ for better readability. In this case, we instead define
$$
  f(x)
:=
\int_{-\infty}^x g(s)\,ds,
$$
where $g$ is the same function defined in \eqref{eq sec6 def of g}. Thus, it follows that
$ f\in C_c^\infty(\R)$ and $  f'=g$. Define
$
 \phi_t(x,y)
:=
  f(x-x_t)  f(y-y_t).
$
Then
$$
\p_{xy} \phi_t=1
\quad\text{on }B_{R_\ep}(z_t),
\qquad
\|\p_{xy}  \phi_t\|_{L^\infty}\le1,
$$
which implies that
$$
-\int_{\R^2}
\om(t,z)\p_{xy} \phi_t(z)\,dz
\ge
\frac12\|\om_{in}\|_{L^1}.
$$
On the other hand, integration by parts gives
$$
\begin{aligned}
\left|
\int_{\R^2}
\om(t,z)\p_{xy} \phi_t(z)\,dz
\right|
&=
\left|
\int_{\R^2}
\p_{xy}\psi(t,z)\Delta \phi_t(z)\,dz
\right|\\
&\le
\|\p_{xy}\psi(t)\|_{L^p(\Supp( \phi_t))}
\|\Delta \phi_t\|_{L^{p'}}.
\end{aligned}
$$
Thus the desired lower bound for $\p_{xy}\psi$ follows.
 
\end{proof}

\appendix

\section{Proof of Proposition \ref{prop:damping-circulation-equivalence}}\label{append:proof}

\begin{proof}[Proof of Proposition \ref{prop:damping-circulation-equivalence}]
By replacing $\om_n$ with $-\om_n$ if necessary, we may assume without loss of
generality that $\om_n\ge0$ for all $n\ge1$.

Recall that the assumption implies
\begin{equation}\label{eq uniform bound for u}
    \sup_{n\ge1}\|u_n\|_{L^q(\Omega)}<\infty
\end{equation}
for every $1\le q\le\infty$ when $\Omega=\mathbb R\times[-1,1]$ and for
every $2<q\le\infty$ when $\Omega=\mathbb R^2$.

\textbf{$(1)\Rightarrow  (2)$:}

Suppose $ \| u_n\|_{L^{\overline{p}}} \to 0$ for some admissible $\overline{p} \leq \infty$. If $ \overline{p} = \infty$,  then for any $ p <\infty$ admissible, we can choose $q<p$ also admissible  such that
$$
    \|u_n\|_{L^p(\Omega)}
    \le
    \|u_n\|_{L^\infty(\Omega)}^{1-q/p}
    \|u_n\|_{L^q(\Omega)}^{q/p} \to 0.
$$

Now we consider $ \overline{p} < \infty$. It suffices to prove $ L^\infty$ damping from this.  

Suppose that, after passing to a subsequence, there exist $C>0$ and points $z_n\in\Omega$ such that $|u_n(z_n)|\ge C$. By the uniform log-Lipschitz continuity, there exists $r>0$ independent of $n$ such that
$$
    |u_n(z)-u_n(z_n)|\le \frac{C}{2}
    \quad \text{for all} \quad  z\in B_r(z_n).
$$
Therefore,
$$
    \|u_n\|_{L^{\overline p}(\Omega)}^{\overline p}
    \ge
    \int_{B_r(z_n)}|u_n(z)|^{\overline p}\,dz
    \ge
    \left(\frac{C}{2}\right)^{\overline p}
    |B_r(z_n)|
    \gtrsim_{r,C}1,
$$
contradicting $\|u_n\|_{L^{\overline p}(\Omega)}\to0$.

\textbf{$(2)\Rightarrow(3)$:}
Recall that $B_1(z_0)=\{z \in\Om : |z-z_0|\le 1\}$. Since $\om_n\ge0$, Stokes' theorem gives
$$
    \sup_{ z_0\in\Omega}
    \int_{B_1( z_0)}\om_n(z')\,dz'
    \le
    \sup_{ z_0\in\Omega}
    \left|
        \int_{\partial B_1( z_0)}u_n\cdot\tau\,ds
    \right|
    \lesssim
    \|u_n\|_{L^\infty(\Omega)}
    \to0.
$$

\textbf{$(3)\Rightarrow (1)$:} It suffices to show $L^\infty$ damping. 
We split the Biot--Savart integral into near, intermediate, and far regions. Recall that $|K(z,z')| \lesssim \frac{1}{|z-z'|}$ for either choice of \(\Omega\). Choose $R>1 $ sufficiently large such that 
$$ \sup_{z\in \Om} \int_{|z-z'|\le R^{-1} } \frac{|\om_n(z')|}{|z-z'|}\,dz' \le \frac{C}{R} \sup_{n \ge 1}\|\om_n\|_{L^\infty}  \le \frac{\ep}{3},
$$ 
and 
$$ \sup_{z\in \Om} \int_{|z-z'|\ge R } \frac{|\om_n(z')|}{|z-z'|}\,dz' \le \frac{1}{R}\sup_{n \ge 1}\|\om_n\|_{L^1} \le \frac{\ep}{3}. 
$$ 
Finally, in the intermediate region we have 
\begin{equation} \begin{aligned} \sup_{z\in\Om} \int_{ R^{-1} \le |z-z'|\le R } \frac{|\om_n(z')|}{|z-z'|}\,dz' & \le R \sup_{z\in\Om}\int_{|z-z'| \le R } |\om_n(z')| \,dz' \\ & \le C_R(R+1)^2 \sup_{z_0 \in\Om} \left| \int_{B_{1}(z_0)} \om_n (z)\,dz \right| \to 0, \end{aligned} \end{equation} 
which completes the proof.
\end{proof}

\section{Linear damping rates}\label{append:linear rate}

In this section, we prove linear inviscid damping rates using elementary arguments. These seem not to be available in the literature. 

A notable feature is that the rates depend  on both the $L^p$ scale and the spatial domain.

\subsection{Linear damping in the infinite channel \texorpdfstring{$\R\times [-1,1]$}{R x [-1,1]} }

We first prove the linear inviscid damping rates in the infinite channel $\R\times [-1,1]$. 

\begin{theorem}\label{thm:append_linear_damping}
    Let $1<p<\infty$ and $\Om=\R\times [-1,1]$.   Consider $\omega = \om_{in}(x - y t,y)$ for some $\om_{in} \in C_c^\infty(\Om )$ and let $u(t)$ be the corresponding velocity field. Then
\[
\|u(t)\|_{L^p(\Omega)}
\lesssim_{p, \om_{in}} \langle t\rangle^{-1/p'},
\]
where $1<p'<\infty$ is the H\"older dual of $p$.

\end{theorem}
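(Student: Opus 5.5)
The plan is to write $u=\nabla^\perp\psi$ with $\Delta\psi=\om$ and Dirichlet conditions at $y=\pm1$, and to exploit the fact that $\om(t)$ is a sheared copy of $\om_{in}$. The estimate splits into two regimes: a duality/integration-by-parts bound that gains a factor $t^{-1}$ in $L^\infty$, and the growth $\sim t$ of the horizontal support, which costs $t^{1/p}$ in $L^p$. Together these give $t^{-1+1/p}=t^{-1/p'}$.

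For short times, $\|u(t)\|_{L^p}\lesssim 1$ for $t\le 1$ by \eqref{eq:velocity-bound} and interpolation. For $t\ge1$ I split $u$ into near and far parts relative to the support of $\om(t)$. Since $\Supp\om_{in}\subset[-R,R]\times[-1,1]$, we have $\Supp\om(t)\subset S_t:=\{|x|\le R+t\}$, whose area is $\approx t$.

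In the far region, $\{|x|\ge R+t+1\}$, the exponential bound \eqref{eq estimate on K} gives $|u|\lesssim e^{-\frac\pi2 \operatorname{dist}(x,S_t)}\|\om\|_{L^1}$. Its $L^p$ norm is therefore $O(1)$, and this already obstructs the claimed rate. I would fix it by first subtracting the $x$-average: let $\bar\om(y)=\int\om_{in}(x,y)\,dx$. Since $\int_{\R}\om(t,x,y)\,dx=\bar\om(y)$ is independent of $t$, the far field at distance $d\gg t$ from $S_t$ is governed by the leading Fourier mode $e^{-\frac\pi2|x-x'|}$ with weight $\int\sin(\frac\pi2(y'+1))\om(t,z')\,dz'$, which is time-independent. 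It is then not small, so the $\langle t\rangle^{-1/p'}$ rate cannot come from the far field alone. The honest plan is to require (or check) that the far-field amplitude tends to zero. Replacing $e^{-\frac\pi2|x-x'|}$ by $e^{-\frac\pi2|x|}e^{\pm\frac\pi2 x'}$ with $x'=x_0+yt$, the coefficient becomes $\int e^{\frac\pi2 (x_0+y't)}\sin(\cdot)\om_{in}\,dz'$. This is an oscillation-free Laplace-type integral, so it is not small. I therefore instead compare directly: the far-region $L^p$ contribution is bounded by $\|u\|_{L^\infty(\text{boundary strip})}$ times $O(1)$, and it suffices to show $|u(t,z)|\lesssim t^{-1}$ uniformly near and outside $S_t$.

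The key step is the pointwise bound $\|u(t)\|_{L^\infty}\lesssim t^{-1}$, which gives $\|u\|_{L^p(S_t+[-C,C])}\lesssim t^{-1}\,t^{1/p}=t^{-1/p'}$, and the exponential tail outside then contributes $\lesssim t^{-1}$. To prove it, write $\om(t,x,y)=\om_{in}(x-yt,y)$ and use the identity $\partial_y[\om_{in}(x-yt,y)]=-t\,\partial_x\om+(\partial_y\om_{in})(x-yt,y)$. This gives $\om=t^{-1}\partial_x\big(\partial_x^{-1}\text{-type}\big)$: concretely, let $F(t,x,y)=\int_{-\infty}^x\om(t,s,y)\,ds$, so that $\om=\partial_xF$. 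The last term of $u=\int K\,\partial_{x'}F$ is integrated by parts in $x'$, producing $\partial_{x'}K$, and then the $F$ contributions are converted via $\partial_{y'}$ identities into $t^{-1}$ times derivatives of $\om_{in}$. The main obstacle is that $F$ does not decay as $x\to+\infty$ when $\bar\om\ne0$. I would therefore split $\om=(\om-\bar\om(y)\varphi_t(x))+\bar\om(y)\varphi_t(x)$, where $\varphi_t$ is a normalized bump of width $t$ adapted to $S_t$. The mean-zero part admits the integration by parts above, with kernel bounds \eqref{eq estimate on K} and \eqref{eq K1}, giving $L^\infty$ size $t^{-1}$. The mean part has vorticity of size $t^{-1}$ spread over area $t$, so its velocity is $\lesssim t^{-1}$ in $L^\infty$ by the channel kernel's $L^1_x$ integrability, and it is supported (up to exponential tails) on $S_t$. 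Combining the two parts gives the $t^{-1/p'}$ bound.
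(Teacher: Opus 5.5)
\textbf{There is a genuine gap at the key step.} Your architecture (prove $\|u(t)\|_{L^\infty}\lesssim t^{-1}$, multiply by $|S_t|^{1/p}\sim t^{1/p}$, control the screened far field) would give the rate, and the pointwise bound is in fact true for such data. But the argument you sketch does not prove it, and that bound is the whole content of the theorem.

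\emph{The integration by parts in $x'$.} Writing $u=\int K\,\partial_{x'}F$ and integrating by parts in $x'$ gains nothing, since $F=O(1)$ on a region of area $\sim t$. It also produces $\partial_{x'}K\sim|z-z'|^{-2}$, which is not locally integrable. So the kernel bounds of Lemma~\ref{lem:kernel_change x x'}, which control only $K$, cannot close the estimate.

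\emph{The shear identity.} The factor $t^{-1}$ must come from the shear identity. With $F_{in}(X,y)=\int_{-\infty}^X\omega_{in}(s,y)\,ds$ and $F(t,x,y)=F_{in}(x-yt,y)$, it reads $\omega=\partial_xF=t^{-1}\bigl[(\partial_yF_{in})(x-yt,y)-\partial_yF\bigr]$.
\begin{itemize}
\item The first term is harmless. The channel kernel is integrable, so $L^\infty\to L^\infty$ bounds hold even though $F_{in}$ does not decay as $x\to+\infty$. The non-decay you single out as the main obstacle is therefore not an obstacle in the channel. Your split $\omega=(\omega-\bar\omega\varphi_t)+\bar\omega\varphi_t$ also destroys the transported form that the identity relies on.
\item The real obstacle is the second term, $t^{-1}\nabla^\perp\Delta^{-1}\partial_yF$. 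This is a Calder\'on--Zygmund operator applied to a bounded function with $|\nabla F|\sim t$. Generic estimates give only $O(\log t)$ for it, hence $\|u(t)\|_{L^p}\lesssim t^{-1/p'}\log t$, which misses the statement.
\end{itemize}
Removing the logarithm requires the specific geometry. At time $t$ the vorticity is a nearly horizontal sheet of thickness $\sim 1/t$, smooth along itself on scale $t$. One must integrate $K^x$ exactly along each sheared line and use the odd cancellation of $K^y$; nothing of this kind is in your sketch.

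Your aside that the far-field amplitude is not small is also wrong. Relative to the edge of $S_t$, the weight $e^{\frac\pi2(x'-R-t)}$ localizes to $1-y'\lesssim 1/t$, where $\sin(\frac\pi2(y'+1))$ vanishes.

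\textbf{Missing idea: pointwise control of $u$ is not needed.} The $L^\infty$ bound is the endpoint $p=\infty$, which the theorem excludes. The paper argues by duality instead.
\begin{enumerate}
\item For a test field $V\in L^{p'}$, let $-\Delta\Phi=\nabla^\perp\cdot V$ with $\Phi|_{y=\pm1}=0$. Then $\int u\cdot V=\int\omega_{in}(X,y)\,\Phi(X+ty,y)\,dy\,dX$.
\item For fixed $X$, the sheared segment $\{(X+ty,y)\}$ crosses each vertical slice $x=s$ exactly once, with Jacobian $1/t$, for $s$ in an interval of length $2t$.
\item The embedding $W^{1,p'}_y\hookrightarrow L^\infty_y$ on slices and H\"older in $s$ give $\int_{-1}^1|\Phi(X+ty,y)|\,dy\lesssim t^{-1}(2t)^{1/p}\|\Phi\|_{W^{1,p'}}$.
\item The elliptic estimate $\|\Phi\|_{W^{1,p'}}\lesssim\|V\|_{L^{p'}}$ finishes the proof.
\end{enumerate}
The derivative that costs you a logarithm in $L^\infty$ is absorbed here by a Calder\'on--Zygmund estimate in $L^{p'}$ with $1<p'<\infty$, where such operators are bounded.
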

\begin{proof}
We use a duality argument. 
Since
$$
\|u \|_{L^p(\Om)} = \sup_{\| F \|_{ L^{p'}} \le 1 } \int_\Omega u\cdot F,
$$
we take any vector-valued $F \in L^{p'}$ and let \(\Phi\) solve
\[
-\Delta \Phi=\nabla^\perp \cdot F,\qquad \Phi|_{y=\pm1}=0.
\]
Assume that $\Supp  \om_{in} \subset [-R_0,R_0]\times [-1,1]$. Then 
\[
\begin{aligned}
\left|\int_\Omega u\cdot F \right|
&=\left|\int_{\mathbb R}\int_{-1}^1
\omega_{in}(X,y)\Phi(X+ty,y)\,dy\,dX\right| \\
&\le \|\om_{in}\|_{L^\infty}\int_{-R_0}^{R_0}\int_{-1}^1
\left|\Phi(X+ty,y)\right|\,dy\,dX.
\end{aligned}
\]

Setting \(s=X+ty\) and using the 1D embedding
\(W^{1,p'}(-1,1)\hookrightarrow L^\infty(-1,1)\),
\[
\begin{aligned}
\int_{-1}^1 |\Phi(X+ty,y)|\,dy
&=\frac1{t}\int_{X-t}^{X+t}
\left|\Phi\!\left(s,\frac{s-X}{t}\right)\right|\,ds \\
&\lesssim_{p} \frac1{t}
\int_{X-t}^{X+t}
\|\Phi(s,\cdot)\|_{W^{1,p'}_y}\,ds \\
&\lesssim_{p}  t^{-1/p'}\|\Phi\|_{W^{1,p'}(\Omega)},
\end{aligned}
\]
where we have used the slicing property of the Sobolev space $W^{1,p'}$: for $1<p'<\infty$
\begin{equation}
  \|f \|_{L^{p'}_x W^{1,p'}_y} + \|f \|_{L^{p'}_y W^{1,p'}_x} \lesssim \|f\|_{W^{1,p'}(\Om) } .
\end{equation}
Hence, standard elliptic regularity yields
\[
\left|\int_\Omega u(t)\cdot F\right|
\lesssim |t|^{-1/p'}\|\Phi\|_{W^{1,p'}}
\lesssim |t|^{-1/p'}\|F\|_{L^{p'}}.
\]
By duality, 
\[
\|u(t)\|_{L^p(\Omega)}
\lesssim \langle t\rangle^{-1/p'}.
\]
\end{proof}

\subsection{Linear damping in the whole plane \texorpdfstring{$\R^2$}{R2}}

Next, we prove the linear inviscid damping rates in the whole plane. Note that the whole plane rate below $ \langle t\rangle^{-1+\frac{2}{p}}$ is slower than the channel case $ \langle t\rangle^{-1+\frac{1}{p}} $.

\begin{theorem}\label{thm:append_linear_damping R^2}
    Let $2<p<\infty$.   Consider $\omega = \om_{in}(x - y t,y)$ for some $\om_{in} \in C_c^\infty(\R^2 )$ and let $u(t)$ be the corresponding velocity field. Then
\[
\|u(t)\|_{L^p(\R^2)}
\lesssim_{p, \om_{in}} \langle t\rangle^{-1+\frac{2}{p}}.
\]

\end{theorem}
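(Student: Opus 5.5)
We plan to prove the bound \(\|u(t)\|_{L^p}\lesssim \langle t\rangle^{-1+2/p}\) by interpolating two estimates for the linear evolution \(\om(t,x,y)=\om_{in}(x-yt,y)\): a uniform \(L^\infty\) bound decaying like \(t^{-1}\) (up to a logarithm that we must avoid), and a uniform \(L^2\)-type bound. Note first that \(\|\om(t)\|_{L^1}\), \(\|\om(t)\|_{L^\infty}\) are conserved. Hence \eqref{eq:R2-velocity-bound} gives \(\|u(t)\|_{L^p}\lesssim 1\) for all \(2<p\le\infty\), which handles \(t\le 1\). So we assume \(t\ge1\).

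\textbf{The \(L^\infty\) endpoint.} The vorticity at time \(t\) is supported in a sheared parallelogram \(\{|x-yt|\le R_0,\ |y|\le R_0\}\), of height \(O(1)\) and horizontal extent \(O(t)\). For any \(z\), split the Biot--Savart integral \(\int |z-z'|^{-1}|\om(z')|dz'\). The parallelogram meets each horizontal line in an interval of length \(\le 2R_0\), and each vertical line \(x'=\)const in a \(y'\)-interval of length \(\lesssim 1/t\). Integrate first in \(y'\) over this slice. For \(|x-x'|\ge 1\), the slice contributes \(\lesssim \|\om_{in}\|_{L^\infty} t^{-1}|x-x'|^{-1}\), and \(x'\) ranges over a set of length \(O(t)\), giving \(O(t^{-1}\log t)\). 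The region \(|x-x'|\le1\) contributes \(O(t^{-1}\log t)\) as well, since the slice is thin. This yields \(\|u(t)\|_{L^\infty}\lesssim t^{-1}\log t\), which is slightly lossy.

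\textbf{Duality instead.} To get the sharp rate without logarithms, mimic the proof of Theorem \ref{thm:append_linear_damping}. Write \(\int u\cdot F=\int \om_{in}(X,y)\Phi(X+ty,y)\,dXdy\) with \(-\Delta\Phi=\nabla^\perp\cdot F\), \(F\in L^{p'}\), \(1<p'<2\). In \(\R^2\), elliptic regularity only gives \(\nabla\Phi\in L^{p'}\). Sobolev embedding then gives \(\Phi\in L^{q}\) with \(1/q=1/p'-1/2=1/2-1/p\), which is finite since \(p>2\), and \(\|\Phi\|_{L^q}\lesssim\|F\|_{L^{p'}}\). Change variables \(s=X+ty\). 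For fixed \(X\), the line \(\{(X+ty,y)\}\) is nearly horizontal. Use the trace-type slicing \(\int_{-R_0}^{R_0}|\Phi(X+ty,y)|dy=t^{-1}\int|\Phi(s,(s-X)/t)|ds\), and then Hölder in \(s\) over a length-\(O(t)\) interval. Integrating also over \(|X|\le R_0\) effectively averages over a region of area \(O(t)\) in the \((s,y)\)-plane, but with Jacobian \(t^{-1}\). Concretely, the map \((X,y)\mapsto(X+ty,y)\) is area preserving. So \(\iint_{|X|,|y|\le R_0}|\Phi(X+ty,y)|\,dXdy=\int_{P_t}|\Phi|\), where \(P_t\) is a parallelogram of area \(4R_0^2\), which gives no decay directly.

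The decay comes from the gradient instead, and this is the step we expect to be hardest: extracting the factor \(t^{-1}\) via \(\nabla\Phi\). Write \(\om_{in}=\p_X G\) plus a mean part. For the mean-zero part in \(X\), integrate by parts in \(X\): \(\int \om_{in}\Phi(X+ty,y)=-\int G\,(\p_x\Phi)(X+ty,y)\). Then bound \(\|\nabla\Phi\|_{L^{p'}}\) over the area-\(O(1)\) parallelogram, but the sheared geometry must supply extra decay. The cleanest route is Fourier based. In frequency, \(\hat\om(t,k,\eta)=\hat\om_{in}(k,\eta+kt)\), so \(\hat u\sim |\hat\om_{in}(k,\eta+kt)|/|(k,\eta)|\). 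This gives \(\|u\|_{L^2}^2\sim\int|\hat\om_{in}(k,\xi)|^2/(k^2+(\xi-kt)^2)\). That is bounded, not decaying, because of the zero mode \(k\approx0\): indeed \(\|u\|_{L^2}\) need not decay. Interpolating this \(L^2\) bound with \(\|u\|_{L^\infty}\lesssim t^{-1}\) gives exactly \(t^{-1+2/p}\). So the plan reduces to a log-free \(L^\infty\) bound of order \(t^{-1}\). For this we refine the first computation, using that the smooth \(\om_{in}\) has \(x\)-average structure. Subtract the horizontal average along the shear and integrate by parts in \(x'\) in the far region, where \(\p_{x'}\) of the kernel gains \(|x-x'|^{-2}\), which is summable. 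The remaining average piece is a shear-type profile whose velocity we compute explicitly and bound by \(t^{-1}\).
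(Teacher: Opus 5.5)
Your reduction has two defects, and together they leave the theorem unproved.

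First, the $L^2$ endpoint is false in general. If $\int\om_{in}\neq0$, then $\hat\om_{in}(0,0)\neq0$. After the unimodular change of variables $\eta=\xi-kt$, your integral $\int|\hat\om_{in}(k,\xi)|^2/(k^2+(\xi-kt)^2)\,dk\,d\xi$ is bounded below by a multiple of $\int_{|(k,\eta)|\le\delta}\frac{dk\,d\eta}{k^2+\eta^2}=\infty$. In physical space, $u(t,z)\approx\frac{1}{2\pi}\bigl(\int\om_{in}\bigr)z^\perp/|z|^2$ as $|z|\to\infty$. So $u(t)\notin L^2$ for every $t$ whenever the circulation is nonzero, in particular for all sign-definite data. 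This is why the statement is restricted to $p>2$.

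This defect is repairable. Since $|u|\le\frac1{2\pi}|\cdot|^{-1}*|\om|$ and $|z|^{-1}\in L^{2,\infty}(\R^2)$, you get $\|u(t)\|_{L^{2,\infty}}\lesssim\|\om_{in}\|_{L^1}$ uniformly in $t$. The layer-cake formula then gives $\|u\|_{L^p}\le\bigl(\frac{p}{p-2}\bigr)^{1/p}\|u\|_{L^{2,\infty}}^{2/p}\|u\|_{L^\infty}^{1-2/p}$ for $p>2$.

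Second, and more seriously, everything then rests on the logarithm-free bound $\|u(t)\|_{L^\infty}\lesssim\langle t\rangle^{-1}$. You do not prove it, and the mechanism you sketch does not give it.

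\begin{itemize}
\item Integrating by parts in $x'$ only treats the far region. By your own computation, the near region $|x-x'|\le1$ (a piece of sheet of thickness $\sim t^{-1}$ and unit length) already contributes $t^{-1}\log t$ in absolute value. That logarithm can only be removed by cancellation of the odd kernel $K^y$ along the sheet. Moving $\p_{x'}$ onto the kernel there produces $|z-z'|^{-2}$, whose absolute-value bound against such a layer is $O(1)$.
\item Whatever localized ``average piece'' you subtract (e.g.\ $\bar\om(y)\chi(x-yt)$ with $\bar\om(y)=\int\om_{in}(X,y)\,dX$) must carry the full circulation. It is itself a sheared bump of length $\sim t$ and thickness $\sim t^{-1}$, not a shear with an explicitly computable velocity. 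It is exactly the piece for which the Fourier bound $\|\hat u\|_{L^1}$ is of size $t^{-1}\log t$.
\end{itemize}

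The bound is nevertheless true. Fix $X'$ and substitute $x'=X'+y't$. Then $(x-x')^2+(y-y')^2=\langle t\rangle^2[(y'-s_0)^2+\beta^2]$, with $s_0=\frac{t(x-X')+y}{\langle t\rangle^2}$ and $\beta=\frac{|x-X'-yt|}{\langle t\rangle^2}$. Since $x-x'$ and $y-y'$ are affine in $y'$, each component of $u$ becomes a sum of Poisson and conjugate Poisson integrals of $\om_{in}(X',\cdot)$ at scale $\beta$, with prefactors $\lesssim\langle t\rangle^{-1}$. With this and the weak-$L^2$ fix, your route would be a valid alternative proof.

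The paper never passes through $L^\infty$. It uses the duality argument you abandoned. There, the missing idea is not to integrate $|\Phi|$ over the area-preserving image of the support. Instead, rotate to coordinates aligned with the sheet, $\xi=\frac{tX}{\langle t\rangle}+\langle t\rangle y$ and $\eta=-\frac{X}{\langle t\rangle}$, in which the support is a rectangle of size $O(\langle t\rangle)\times O(\langle t\rangle^{-1})$. In the thin direction, bound $\Phi$ by the 1D inequality $\|f\|_{L^\infty_\eta}\lesssim\|f\|_{L^q_\eta}^{2/p}\|\p_\eta f\|_{L^{p'}_\eta}^{1-2/p}$ with $q=\frac{2p}{p-2}$. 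Then apply H\"older in $\xi$ over a length $O(\langle t\rangle)$. This gives $\langle t\rangle^{-1+2/p}\|\Phi\|_{L^q}^{2/p}\|\nabla\Phi\|_{L^{p'}}^{1-2/p}\lesssim\langle t\rangle^{-1+2/p}\|F\|_{L^{p'}}$.
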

\begin{proof}
We also use the  duality 
$$
\|u \|_{L^p(\R^2)} = \sup_{\| F \|_{ L^{p'}} \le 1 } \int_{\R^2} u\cdot F.
$$
Take any vector-valued $F \in L^{p'}(\R^2)$ and let \(\Phi\) solve
\begin{equation}\label{eq sec app F}
-\Delta \Phi= \nabla^\perp \cdot F.
\end{equation}
  Assume that $\Supp \om_{in} \subset [-R_0,R_0]^2$. Then 
\[
\begin{aligned}
\left|\int_{\R^2} u\cdot F \right|
&=\left|\iint_{\mathbb R^2}
\omega_{in}(X,y)\Phi(X+ty,y)\,dy\,dX\right| \\
&\le \|\om_{in}\|_{L^\infty}\int_{-R_0}^{R_0}\int_{-R_0}^{R_0}
\left|\Phi(X+ty,y)\right|\,dy\,dX.
\end{aligned}
\] 
We perform the change of variables (so that $(x,y) \mapsto (\xi,\eta)$ is a rotation)
\[
    \xi
    =
    \frac{tX}{\langle t\rangle}
    +
    \langle t\rangle y,
    \qquad
    \eta
    =
    -\frac{X}{\langle t\rangle}.
\]
Define
$
    \overline{\Phi}(\xi,\eta)
    := \Phi(x,y)= 
    \Phi\left(
        \frac{t\xi-\eta}{\langle t\rangle},
        \frac{\xi+t\eta}{\langle t\rangle}
    \right)$.
Note that $|X|\le R_0, |y|\le R_0$ implies
\[
    |\eta|
    \le
    \frac{R_0}{\langle t\rangle} \quad \text{and} \quad |\xi|
    \le
    2R_0\langle t\rangle;
\]
we obtain
\[
\begin{aligned}
\int_{-R_0}^{R_0}\int_{-R_0}^{R_0}
    |\Phi(X+ty,y)|\,dy\,dX &\le
    \int_{|\xi|\le 2R_0\langle t\rangle}
    \int_{|\eta|\le R_0/\langle t\rangle}
    |\overline{\Phi}(\xi,\eta)|\,d\eta\,d\xi \\
&\le
    \frac{2R_0}{\langle t\rangle}
    \int_{|\xi|\le 2R_0\langle t\rangle}
    \|\overline{\Phi}(\xi,\cdot)\|_{L^\infty_\eta}
    \,d\xi.
\end{aligned}
\]

For $q=\frac{2p}{p-2}$, we use the 1D Sobolev interpolation 
$$ 
\|\overline{\Phi}(\xi,\cdot)\|_{L^\infty_\eta} \lesssim_p \|\overline{\Phi}(\xi,\cdot)\|_{L^{q}_\eta}^{\frac2p} \|\partial_\eta\overline{\Phi}(\xi,\cdot)\|_{L^{p'}_\eta}^{1-\frac2p}, 
$$ 
to obtain 
\begin{align*} 
\left|\int_{\R^2}u(t)\cdot F\,dz\right| &\lesssim_{p,\omega_{ in}} \frac{1}{\langle t\rangle} \int_{|\xi|\leq 2R_0 \langle t\rangle } \|\overline{\Phi}(\xi,\cdot)\|_{L^{q}_\eta}^{\frac2p} \|\p_\eta\overline{\Phi}(\xi,\cdot)\|_{L^{p'}_\eta}^{1-\frac2p} \,d\xi \\ 
&\lesssim_{p,\omega_{ in}} \langle t\rangle^{-1+\frac2p} \|\overline{\Phi}\|_{L^q(\R^2)}^{\frac2p} \|\p_{\eta}\overline{\Phi}\|_{L^{p'}(\R^2)}^{1-\frac2p} \\
&\lesssim_{p,\omega_{ in}} \langle t\rangle^{-1+\frac2p} \|\Phi\|_{L^q(\R^2)}^{\frac2p} \|\nabla \Phi\|_{L^{p'}(\R^2)}^{1-\frac2p},
\end{align*} 
where in the last step we have used that $(x,y) \mapsto (\xi,\eta)$ is a rotation that preserves all the Sobolev norms.

Finally, since $1<p'<2$, the homogeneous Sobolev inequality and the elliptic estimates for \eqref{eq sec app F} give
$$ 
\|\Phi\|_{L^q(\R^2)} \lesssim_p \|\nabla\Phi\|_{L^{p'}(\R^2)} \lesssim_p \|F\|_{L^{p'}(\R^2)},
$$ 
which implies
$$ \left|\int_{\R^2}u(t)\cdot F\,dz\right| \lesssim_{p,\omega_{ in}} \langle t\rangle^{-1+\frac2p}, 
$$ 
where the constant  depends only on the initial support size and Yudovich norms. 
\end{proof}

\bibliographystyle{plain}
\bibliography{damping}

@article{Case1960Couette,
  author  = {Case, K. M.},
  title   = {Stability of Inviscid Plane {Couette} Flow},
  journal = {Physics of Fluids},
  volume  = {3},
  number  = {2},
  pages   = {143--148},
  year    = {1960},
  doi     = {10.1063/1.1706010}
}

@article{BouchetMorita2010Euler,
  author  = {Bouchet, Freddy and Morita, Hiroki},
  title   = {Large Time Behavior and Asymptotic Stability of the
             {2D Euler} and Linearized {Euler} Equations},
  journal = {Physica D: Nonlinear Phenomena},
  volume  = {239},
  number  = {12},
  pages   = {948--966},
  year    = {2010},
  doi     = {10.1016/j.physd.2010.01.020}
}

@article{Zillinger2016FiniteChannel,
  author  = {Zillinger, Christian},
  title   = {Linear Inviscid Damping for Monotone Shear Flows in a
             Finite Periodic Channel, Boundary Effects, Blow-Up and
             Critical {Sobolev} Regularity},
  journal = {Archive for Rational Mechanics and Analysis},
  volume  = {221},
  number  = {3},
  pages   = {1449--1509},
  year    = {2016},
  doi     = {10.1007/s00205-016-0991-1}
}

@article{Zillinger2017MonotoneShear,
  author  = {Zillinger, Christian},
  title   = {Linear Inviscid Damping for Monotone Shear Flows},
  journal = {Transactions of the American Mathematical Society},
  volume  = {369},
  number  = {12},
  pages   = {8799--8855},
  year    = {2017},
  doi     = {10.1090/tran/6942}
}

@article{WeiZhangZhao2018Sobolev,
  author  = {Wei, Dongyi and Zhang, Zhifei and Zhao, Weiren},
  title   = {Linear Inviscid Damping for a Class of Monotone Shear
             Flow in {Sobolev} Spaces},
  journal = {Communications on Pure and Applied Mathematics},
  volume  = {71},
  number  = {4},
  pages   = {617--687},
  year    = {2018},
  doi     = {10.1002/cpa.21672}
}

@article{WeiZhangZhao2019VorticityDepletion,
  author  = {Wei, Dongyi and Zhang, Zhifei and Zhao, Weiren},
  title   = {Linear Inviscid Damping and Vorticity Depletion for
             Shear Flows},
  journal = {Annals of PDE},
  volume  = {5},
  number  = {1},
  pages   = {Paper No. 3},
  year    = {2019},
  doi     = {10.1007/s40818-019-0060-9}
}

@article {MR3974608,
    AUTHOR = {Bedrossian, Jacob and Germain, Pierre and Masmoudi, Nader},
     TITLE = {Stability of the {C}ouette flow at high {R}eynolds numbers in
              two dimensions and three dimensions},
   JOURNAL = {Bull. Amer. Math. Soc. (N.S.)},
  FJOURNAL = {American Mathematical Society. Bulletin. New Series},
    VOLUME = {56},
      YEAR = {2019},
    NUMBER = {3},
     PAGES = {373--414},
      ISSN = {0273-0979,1088-9485},
   MRCLASS = {76E05 (35B25 35B34 35B35 35Q30 76D05 76E30)},
  MRNUMBER = {3974608},
MRREVIEWER = {Georgios\ C.\ Georgiou},
       DOI = {10.1090/bull/1649},
       URL = {https://doi.org/10.1090/bull/1649},
}

@article{Jia2020Gevrey,
  author  = {Jia, Hao},
  title   = {Linear Inviscid Damping in {Gevrey} Spaces},
  journal = {Archive for Rational Mechanics and Analysis},
  volume  = {235},
  number  = {2},
  pages   = {1327--1355},
  year    = {2020},
  doi     = {10.1007/s00205-019-01445-x}
}

@article{ChenLiMiao2026InfiniteChannel,
  author  = {Chen, Q. and Li, Z. and Miao, C.},
  title   = {Quantitative Stability for the {2D} {Couette} Flow on
             the Infinite Channel with Non-Slip Boundary Condition},
  journal = {Journal of the London Mathematical Society},
  volume  = {113},
  number  = {1},
  pages   = {e70440},
  year    = {2026},
  doi     = {10.1112/jlms.70440}
}

@article{IftimieSiderisGamblin1999PlanarVorticity,
  author  = {Iftimie, Drago{\c{s}} and Sideris, Thomas C. and
             Gamblin, Paul},
  title   = {On the Evolution of Compactly Supported Planar
             Vorticity},
  journal = {Communications in Partial Differential Equations},
  volume  = {24},
  number  = {9--10},
  pages   = {1709--1730},
  year    = {1999},
  doi     = {10.1080/03605309908821480}
}

@article{IftimieLopesFilhoNussenzveigLopes2007Confinement,
  author  = {Iftimie, Drago{\c{s}} and Lopes Filho, Milton C. and
             Nussenzveig Lopes, Helena J.},
  title   = {Confinement of Vorticity in Two-Dimensional Ideal
             Incompressible Exterior Flow},
  journal = {Quarterly of Applied Mathematics},
  volume  = {65},
  number  = {3},
  pages   = {499--521},
  year    = {2007},
  doi     = {10.1090/S0033-569X-07-01059-4}
}

@article{ChoiDenisov2019InfiniteCylinder,
  author  = {Choi, Kyudong and Denisov, Sergey},
  title   = {On the Growth of the Support of Positive Vorticity for
             {2D Euler} Equation in an Infinite Cylinder},
  journal = {Communications in Mathematical Physics},
  volume  = {367},
  number  = {3},
  pages   = {1077--1093},
  year    = {2019},
  doi     = {10.1007/s00220-019-03295-w}
}

@article{Marchioro1994VortexSupport,
  author  = {Marchioro, Carlo},
  title   = {Bounds on the Growth of the Support of a Vortex Patch},
  journal = {Communications in Mathematical Physics},
  volume  = {164},
  number  = {3},
  pages   = {507--524},
  year    = {1994},
  doi     = {10.1007/BF02101489}
}

@article {MR4555167,
    AUTHOR = {Choi, Kyudong and Jeong, In-Jee},
     TITLE = {Filamentation near {H}ill's vortex},
   JOURNAL = {Comm. Partial Differential Equations},
  FJOURNAL = {Communications in Partial Differential Equations},
    VOLUME = {48},
      YEAR = {2023},
    NUMBER = {1},
     PAGES = {54--85},
      ISSN = {0360-5302,1532-4133},
   MRCLASS = {76B47 (35Q31)},
  MRNUMBER = {4555167},
MRREVIEWER = {Da-Wen\ Deng},
       DOI = {10.1080/03605302.2022.2139721},
       URL = {https://doi.org/10.1080/03605302.2022.2139721},
}

@article {MR4168275,
    AUTHOR = {Choi, Kyudong and Jeong, In-Jee},
     TITLE = {Growth of perimeter for vortex patches in a bulk},
   JOURNAL = {Appl. Math. Lett.},
  FJOURNAL = {Applied Mathematics Letters. An International Journal of Rapid
              Publication},
    VOLUME = {113},
      YEAR = {2021},
     PAGES = {Paper No. 106857, 9},
      ISSN = {0893-9659,1873-5452},
   MRCLASS = {76B47},
  MRNUMBER = {4168275},
       DOI = {10.1016/j.aml.2020.106857},
       URL = {https://doi.org/10.1016/j.aml.2020.106857},
}

@article {MR4350517,
    AUTHOR = {Choi, Kyudong and Jeong, In-Jee},
     TITLE = {Infinite growth in vorticity gradient of compactly supported
              planar vorticity near {L}amb dipole},
   JOURNAL = {Nonlinear Anal. Real World Appl.},
  FJOURNAL = {Nonlinear Analysis. Real World Applications. An International
              Multidisciplinary Journal},
    VOLUME = {65},
      YEAR = {2022},
     PAGES = {Paper No. 103470, 20},
      ISSN = {1468-1218,1878-5719},
   MRCLASS = {35Q31 (76B47)},
  MRNUMBER = {4350517},
       DOI = {10.1016/j.nonrwa.2021.103470},
       URL = {https://doi.org/10.1016/j.nonrwa.2021.103470},
}

@article{HamelNadirashvili2017ShearFlows,
  author  = {Hamel, Fran{\c{c}}ois and Nadirashvili, Nikolai},
  title   = {Shear Flows of an Ideal Fluid and Elliptic Equations in
             Unbounded Domains},
  journal = {Communications on Pure and Applied Mathematics},
  volume  = {70},
  number  = {3},
  pages   = {590--608},
  year    = {2017},
  doi     = {10.1002/cpa.21670}
}

@article{HamelNadirashvili2019LiouvillePlane,
  author  = {Hamel, Fran{\c{c}}ois and Nadirashvili, Nikolai},
  title   = {A {Liouville} Theorem for the {Euler} Equations in the Plane},
  journal = {Archive for Rational Mechanics and Analysis},
  volume  = {233},
  number  = {2},
  pages   = {599--642},
  year    = {2019},
  doi     = {10.1007/s00205-019-01364-x}
}

@article{LiuMasmoudiZhao2026HalfPlane,
  author        = {Liu, Ning and Masmoudi, Nader and Zhao, Weiren},
  title         = {Long-Time Behaviour of Two-Dimensional
                   {Navier--Stokes} Equations in the Presence of
                   {Couette} Flow on the Half Plane},
  journal       = {preprint},
  year          = {2026},
  eprint        = {2605.21663},
  archivePrefix = {arXiv},
  primaryClass  = {math.AP}
}

@article{bedrossian2024uniforminvisciddampinginviscid,
      title={Uniform Inviscid Damping and Inviscid Limit of the 2D Navier-Stokes equation with Navier Boundary Conditions}, 
      author={Jacob Bedrossian and Siming He and Sameer Iyer and Fei Wang},
      journal       = {preprint},
      year={2024},
      eprint={2405.19249},
      archivePrefix={arXiv},
      primaryClass={math.AP},
      url={https://arxiv.org/abs/2405.19249}, 
}

@article{DrivasNualart2026LaminarFlow,
  author  = {Drivas, Theodore D. and Nualart, Marc},
  title   = {A Geometric Characterization of Steady Laminar Flow},
  journal = {Communications on Pure and Applied Mathematics},
  year    = {2026},
  note    = {Early View},
  doi     = {10.1002/cpa.70055}
}

@article {MR3415068,
    AUTHOR = {Bedrossian, Jacob and Masmoudi, Nader},
     TITLE = {Inviscid damping and the asymptotic stability of planar shear
              flows in the 2{D} {E}uler equations},
   JOURNAL = {Publ. Math. Inst. Hautes \'Etudes Sci.},
  FJOURNAL = {Publications Math\'ematiques. Institut de Hautes \'Etudes
              Scientifiques},
    VOLUME = {122},
      YEAR = {2015},
     PAGES = {195--300},
      ISSN = {0073-8301,1618-1913},
   MRCLASS = {35Q31 (35B35 35Q35)},
  MRNUMBER = {3415068},
MRREVIEWER = {Matthew\ Paddick},
       DOI = {10.1007/s10240-015-0070-4},
       URL = {https://doi.org/10.1007/s10240-015-0070-4},
}

@article{MarchioroPulvirenti1983SingularVorticity,
  author  = {Marchioro, Carlo and Pulvirenti, Mario},
  title   = {Euler Evolution for Singular Initial Data and Vortex Theory},
  journal = {Communications in Mathematical Physics},
  volume  = {91},
  number  = {4},
  pages   = {563--572},
  year    = {1983},
  doi     = {10.1007/BF01206023}
}

@article{Marchioro1988GlobalVortices,
  author  = {Marchioro, Carlo},
  title   = {Euler Evolution for Singular Initial Data and Vortex Theory: A Global Solution},
  journal = {Communications in Mathematical Physics},
  volume  = {116},
  number  = {1},
  pages   = {45--55},
  year    = {1988},
  doi     = {10.1007/BF01239024}
}

@article{MarchioroPulvirenti1993Localization,
  author  = {Marchioro, Carlo and Pulvirenti, Mario},
  title   = {Vortices and Localization in {Euler} Flows},
  journal = {Communications in Mathematical Physics},
  volume  = {154},
  number  = {1},
  pages   = {49--61},
  year    = {1993},
  doi     = {10.1007/BF02096831}
}

@article {MR4628607,
    AUTHOR = {Ionescu, Alexandru D. and Jia, Hao},
     TITLE = {Non-linear inviscid damping near monotonic shear flows},
   JOURNAL = {Acta Math.},
  FJOURNAL = {Acta Mathematica},
    VOLUME = {230},
      YEAR = {2023},
    NUMBER = {2},
     PAGES = {321--399},
      ISSN = {0001-5962,1871-2509},
   MRCLASS = {76E30 (35Q35 76B03)},
  MRNUMBER = {4628607},
MRREVIEWER = {Yanguang\ (Charles)\ Li},
       DOI = {10.4310/acta.2023.v230.n2.a2},
       URL = {https://doi.org/10.4310/acta.2023.v230.n2.a2},
}

@article{Taylor54,
    author = {Taylor, Geoffrey Ingram},
    title = {The dispersion of matter in turbulent flow through a pipe},
    journal = {Proceedings of the Royal Society of London. A. Mathematical and Physical Sciences},
    volume = {223},
    number = {1155},
    pages = {446-468},
    year = {1954},
    month = {05}, 
    issn = {0080-4630},
    doi = {10.1098/rspa.1954.0130},
    url = {https://doi.org/10.1098/rspa.1954.0130},
    eprint = {https://royalsocietypublishing.org/rspa/article-pdf/223/1155/446/48253/rspa.1954.0130.pdf},
}

@article {LLZ26,
    AUTHOR = {Li, Hui and Liu, Ning and Zhao, Weiren},
     TITLE = {Stability threshold of the two-dimensional {C}ouette flow in
              the whole plane},
   JOURNAL = {J. Funct. Anal.},
  FJOURNAL = {Journal of Functional Analysis},
    VOLUME = {290},
      YEAR = {2026},
    NUMBER = {4},
     PAGES = {Paper No. 111271, 40},
      ISSN = {0022-1236,1096-0783},
   MRCLASS = {35Q30 (35B35 76D03)},
  MRNUMBER = {4988188},
MRREVIEWER = {Hyunseok\ Kim},
       DOI = {10.1016/j.jfa.2025.111271},
       URL = {https://doi.org/10.1016/j.jfa.2025.111271},
}

@article {AB25,
    AUTHOR = {Arbon, Ryan and Bedrossian, Jacob},
     TITLE = {Quantitative hydrodynamic stability for {C}ouette flow on
              unbounded domains with {N}avier boundary conditions},
   JOURNAL = {Comm. Math. Phys.},
  FJOURNAL = {Communications in Mathematical Physics},
    VOLUME = {406},
      YEAR = {2025},
    NUMBER = {6},
     PAGES = {Paper No. 129, 57},
      ISSN = {0010-3616,1432-0916},
   MRCLASS = {35Q30 (35B35 76E07)},
  MRNUMBER = {4902850},
MRREVIEWER = {Tatsu-Hiko\ Miura},
       DOI = {10.1007/s00220-025-05306-5},
       URL = {https://doi.org/10.1007/s00220-025-05306-5},
}

@article {MR4400903,
    AUTHOR = {Ionescu, Alexandru D. and Jia, Hao},
     TITLE = {Axi-symmetrization near point vortex solutions for the 2{D}
              {E}uler equation},
   JOURNAL = {Comm. Pure Appl. Math.},
  FJOURNAL = {Communications on Pure and Applied Mathematics},
    VOLUME = {75},
      YEAR = {2022},
    NUMBER = {4},
     PAGES = {818--891},
      ISSN = {0010-3640,1097-0312},
   MRCLASS = {76D17 (76E30)},
  MRNUMBER = {4400903},
MRREVIEWER = {Takashi\ Sakajo},
       DOI = {10.1002/cpa.21974},
       URL = {https://doi.org/10.1002/cpa.21974},
}

@article {MR4740211,
    AUTHOR = {Masmoudi, Nader and Zhao, Weiren},
     TITLE = {Nonlinear inviscid damping for a class of monotone shear flows
              in a finite channel},
   JOURNAL = {Ann. of Math. (2)},
  FJOURNAL = {Annals of Mathematics. Second Series},
    VOLUME = {199},
      YEAR = {2024},
    NUMBER = {3},
     PAGES = {1093--1175},
      ISSN = {0003-486X,1939-8980},
   MRCLASS = {35Q31 (76E05)},
  MRNUMBER = {4740211},
       DOI = {10.4007/annals.2024.199.3.3},
       URL = {https://doi.org/10.4007/annals.2024.199.3.3},
}

@article {MR4630602,
    AUTHOR = {Deng, Yu and Masmoudi, Nader},
     TITLE = {Long-time instability of the {C}ouette flow in low {G}evrey
              spaces},
   JOURNAL = {Comm. Pure Appl. Math.},
  FJOURNAL = {Communications on Pure and Applied Mathematics},
    VOLUME = {76},
      YEAR = {2023},
    NUMBER = {10},
     PAGES = {2804--2887},
      ISSN = {0010-3640,1097-0312},
   MRCLASS = {76E30},
  MRNUMBER = {4630602},
       DOI = {10.1002/cpa.22092},
       URL = {https://doi.org/10.1002/cpa.22092},
}

@article {CastroLear23Travellingwaves,
    AUTHOR = {Castro, \'Angel and Lear, Daniel},
     TITLE = {Traveling waves near {C}ouette flow for the 2{D} {E}uler
              equation},
   JOURNAL = {Comm. Math. Phys.},
  FJOURNAL = {Communications in Mathematical Physics},
    VOLUME = {400},
      YEAR = {2023},
    NUMBER = {3},
     PAGES = {2005--2079},
      ISSN = {0010-3616,1432-0916},
   MRCLASS = {76B03 (35C07 35Q31 76E05)},
  MRNUMBER = {4595614},
MRREVIEWER = {Delyan\ Zhelyazov},
       DOI = {10.1007/s00220-023-04636-6},
       URL = {https://doi.org/10.1007/s00220-023-04636-6},
}

@article {MR4076093,
    AUTHOR = {Ionescu, Alexandru D. and Jia, Hao},
     TITLE = {Inviscid damping near the {C}ouette flow in a channel},
   JOURNAL = {Comm. Math. Phys.},
  FJOURNAL = {Communications in Mathematical Physics},
    VOLUME = {374},
      YEAR = {2020},
    NUMBER = {3},
     PAGES = {2015--2096},
      ISSN = {0010-3616,1432-0916},
   MRCLASS = {35Q31 (35B35 76B99)},
  MRNUMBER = {4076093},
MRREVIEWER = {Ionu\c t\ Munteanu},
       DOI = {10.1007/s00220-019-03550-0},
       URL = {https://doi.org/10.1007/s00220-019-03550-0},
}

@article {MR2796139,
    AUTHOR = {Lin, Zhiwu and Zeng, Chongchun},
     TITLE = {Inviscid dynamical structures near {C}ouette flow},
   JOURNAL = {Arch. Ration. Mech. Anal.},
  FJOURNAL = {Archive for Rational Mechanics and Analysis},
    VOLUME = {200},
      YEAR = {2011},
    NUMBER = {3},
     PAGES = {1075--1097},
      ISSN = {0003-9527,1432-0673},
   MRCLASS = {76B03 (35L60 35Q35)},
  MRNUMBER = {2796139},
MRREVIEWER = {David\ M.\ Ambrose},
       DOI = {10.1007/s00205-010-0384-9},
       URL = {https://doi.org/10.1007/s00205-010-0384-9},
}

@article{Rayleigh1879,
author = {Rayleigh, Lord},
title = {On the Stability, or Instability, of certain Fluid Motions},
journal = {Proceedings of the London Mathematical Society},
volume = {s1-11},
number = {1},
pages = {57-72},
doi = {https://doi.org/10.1112/plms/s1-11.1.57},
url = {https://londmathsoc.onlinelibrary.wiley.com/doi/abs/10.1112/plms/s1-11.1.57},
eprint = {https://londmathsoc.onlinelibrary.wiley.com/doi/pdf/10.1112/plms/s1-11.1.57},
year = {1879}
}

@article{Kelvin1887,
author = {Kelvin, Lord},
 journal = {Philosophical Magazine},
 pages = {188--196},
 publisher = {Royal Irish Academy},
 title = {Stability of fluid motion: rectilinear motion of viscous fluid between two parallel plates},
 urldate = {2025-11-11},
 volume = {24},
NUMBER = {5},
 year = {1887}
}

@article{Orr1907,
 ISSN = {00358975},
 URL = {http://www.jstor.org/stable/20490591},
 author = {William M'F. Orr},
 journal = {Proceedings of the Royal Irish Academy. Section A: Mathematical and Physical Sciences},
 pages = {69--138},
 publisher = {Royal Irish Academy},
 title = {The Stability or Instability of the Steady Motions of a Perfect Liquid and of a Viscous Liquid. Part II: A Viscous Liquid},
 urldate = {2025-11-11},
 volume = {27},
 year = {1907}
}

@article {MR4302767,
    AUTHOR = {Deng, Yu and Zillinger, Christian},
     TITLE = {Echo chains as a linear mechanism: norm inflation, modified
              exponents and asymptotics},
   JOURNAL = {Arch. Ration. Mech. Anal.},
  FJOURNAL = {Archive for Rational Mechanics and Analysis},
    VOLUME = {242},
      YEAR = {2021},
    NUMBER = {1},
     PAGES = {643--700},
      ISSN = {0003-9527,1432-0673},
   MRCLASS = {76B03 (35Q31)},
  MRNUMBER = {4302767},
MRREVIEWER = {Francesco\ Fanelli},
       DOI = {10.1007/s00205-021-01697-6},
       URL = {https://doi.org/10.1007/s00205-021-01697-6},
}

@article {MR988302,
    AUTHOR = {Dritschel, David G.},
     TITLE = {The repeated filamentation of two-dimensional vorticity
              interfaces},
   JOURNAL = {J. Fluid Mech.},
  FJOURNAL = {Journal of Fluid Mechanics},
    VOLUME = {194},
      YEAR = {1988},
     PAGES = {511--547},
      ISSN = {0022-1120},
   MRCLASS = {76E30 (76C05)},
  MRNUMBER = {988302},
       DOI = {10.1017/S0022112088003088},
       URL = {https://doi.org/10.1017/S0022112088003088},
}

@article{2605.19971,
      title={Flexibility and rigidity for the {C}ouette flow in the infinite channel}, 
      author={Dengjun Guo and Xiaoyutao Luo and Guolin Qin},
      year={2026},
      journal={preprint},
      eprint={2605.19971},
      archivePrefix={arXiv},
      primaryClass={math.AP},
      url={https://arxiv.org/abs/2605.19971}, 
}

@incollection {MR4680382,
    AUTHOR = {Ionescu, Alexandru D. and Jia, Hao},
     TITLE = {On the nonlinear stability of shear flows and vortices},
 BOOKTITLE = {I{CM}---{I}nternational {C}ongress of {M}athematicians. {V}ol.
              5. {S}ections 9--11},
     PAGES = {3776--3799},
 PUBLISHER = {EMS Press, Berlin},
      YEAR = {[2023] \copyright 2023},
      ISBN = {978-3-98547-063-1; 978-3-98547-563-6; 978-3-98547-058-7},
   MRCLASS = {35Q31 (35B20 35B40 76E30)},
  MRNUMBER = {4680382},
       DOI = {10.4171/ICM2022/1},
       URL = {https://doi.org/10.4171/ICM2022/1},
}

@article{jyz2025superlineargradientgrowth2d,
      title={Superlinear gradient growth for 2D Euler equation without boundary}, 
      author={In-Jee Jeong and Yao Yao and Tao Zhou},
      year={2025},
      journal={preprint},
      eprint={2507.15739},
      archivePrefix={arXiv},
      primaryClass={math.AP},
      url={https://arxiv.org/abs/2507.15739}, 
}

@article{2511.21583,
      title={Long time inviscid damping near {C}ouette in {S}obolev spaces}, 
      author={Dengjun Guo and Xiaoyutao Luo},
      year={2025},
      journal={preprint},
      eprint={2511.21583},
      archivePrefix={arXiv},
      primaryClass={math.AP},
      url={https://arxiv.org/abs/2511.21583}, 
}

@article{2603.20065,
      title={Asymptotic stability of shear flows for 2{D} {E}uler equations at {Y}udovich regularity}, 
      author={Dengjun Guo and Xiaoyutao Luo},
      year={2026},
      journal={preprint},
      eprint={2603.20065},
      archivePrefix={arXiv},
      primaryClass={math.AP},
      url={https://arxiv.org/abs/2603.20065}, 
}

@article {Turkington1987,
    AUTHOR = {Turkington, Bruce},
     TITLE = {On the evolution of a concentrated vortex in an ideal fluid},
   JOURNAL = {Arch. Rational Mech. Anal.},
  FJOURNAL = {Archive for Rational Mechanics and Analysis},
    VOLUME = {97},
      YEAR = {1987},
    NUMBER = {1},
     PAGES = {75--87},
      ISSN = {0003-9527},
   MRCLASS = {76C05 (35Q15 58F05)},
  MRNUMBER = {856310},
MRREVIEWER = {Jacob\ Burbea},
       DOI = {10.1007/BF00279847},
       URL = {https://doi.org/10.1007/BF00279847},
}

@article{ChenWeiZhangZhangInhomogeneous,
      title={Nonlinear inviscid damping for 2{D} inhomogeneous incompressible {Euler} equations}, 
      author={Qi Chen and Dongyi Wei and Ping Zhang and Zhifei Zhang},
      year={2026},
      eprint={2303.14858},
      archivePrefix={arXiv},
      primaryClass={math.AP},
      url={https://arxiv.org/abs/2303.14858}, 
      JOURNAL = {JEMS},
      VOLUME    = {to appear},
}

@article {ZhaoInhomogeneousDamping,
    AUTHOR = {Zhao, Weiren},
     TITLE = {Inviscid damping of monotone shear flows for 2{D}
              inhomogeneous {E}uler equation with non-constant density in a
              finite channel},
   JOURNAL = {Ann. PDE},
  FJOURNAL = {Annals of PDE. Journal Dedicated to the Analysis of Problems
              from Physical Sciences},
    VOLUME = {11},
      YEAR = {2025},
    NUMBER = {1},
     PAGES = {Paper No. 8, 152},
      ISSN = {2524-5317,2199-2576},
   MRCLASS = {35Q31 (35B40 76B03)},
  MRNUMBER = {4857962},
MRREVIEWER = {Qin\ Zhao},
       DOI = {10.1007/s40818-025-00197-0},
       URL = {https://doi.org/10.1007/s40818-025-00197-0},
}

\end{document}